\newtheorem{theorem}{Theorem}[section]\newtheorem{thm}[theorem]{Theorem}
\newtheorem*{theorem*}{Theorem}
\newtheorem{lemma}{Lemma}[section]
\newtheorem{corollary}[theorem]{Corollary}
\newtheorem{proposition}{Proposition}[section]
\newtheorem{definition}[theorem]{Definition}
\newtheorem{remark}[theorem]{Remark}
\def\l{\lambda}
\def\p{\partial}
\def\G{\Gamma}
\def\L{{\mathcal L}}
\numberwithin{equation}{section}
\begin{document}

\title[]{On the Lower Bound of the Principal Eigenvalue of a Nonlinear Operator}

%\author{Xiaolong Li}
%\address{Department of Mathematics, University of California, Irvine, Irvine, CA 92697, USA}
%\email{xiaolol1@uci.edu}

\author{Yucheng Tu}%\thanks{The research is partially supported by  ``Capacity Building for Sci-Tech Innovation-Fundamental Research Funds".   }
\address{Department of Mathematics, University of California, San Diego, La Jolla, CA 92093, USA}
\email{y7tu@ucsd.edu}

%\author{Kui Wang}\thanks{The research of the second author is supported by NSFC No.11601359} %\textcolor[rgb]{0.00,0.00,1.00}{}.}
%\address{School of Mathematical Sciences, Soochow University, Suzhou, 215006, China}
%\email{kuiwang@suda.edu.cn}

%\author{Lei Ni}\thanks{The research is partially supported by  ``Capacity Building for Sci-Tech Innovation-Fundamental Research Funds".   }
%\address{Department of Mathematics, University of California, San Diego, La Jolla, CA 92093, USA}
%\email{lni@math.ucsd.edu}

\subjclass[2010]{35P15, 35P30}
\keywords{Eigenvalue estimates, generalized $p$-Laplacian, Bakry-\'Emery curvature dimension, gradient comparison, and half-linear differential equations}
%}and modulus of continuity estimates.}

\maketitle

\begin{abstract}   
We prove sharp lower bound estimates for the first nonzero eigenvalue of the non-linear elliptic diffusion operator $L_p$ on a smooth metric measure space, without boundary or with a convex boundary and Neumann boundary condition, satisfying $BE(\kappa,N)$ for $\kappa\neq 0$. 
Our results extends the work of Koerber\cite{Valtorta12} for case $\kappa=0$ and Naber-Valtorta\cite{NV14} for the $p$-Laplacian.
\end{abstract}

\section{Introduction}

Let $M$ be a compact manifold. The Laplacian operator on $M$ plays a key role in studying the geometry of $M$, and one of the key quantity related to the Laplacian is its first nonzero eigenvalue $\lambda_1$, also called the principal eigenvalue. There have been a lot of works on the estimate of $\lambda_1$ for the Neumann boundary value problem:
$$\begin{cases}
\Delta u=-\lambda u\qquad\text{ on }M\\
\frac{\partial u}{\partial \nu}=0\qquad\text{ in }\partial M
\end{cases}$$
In \cite{PW60} Payne and Weinberger showed $\lambda_1\geq \pi^2/D^2$ for the Laplacian on the convex subset of $\mathbb{R}^n$ with diameter $D$. Later in \cite{Ch70} Cheeger gave a lower bound of $\lambda_1$ in terms of the isoperimetric constant on compact Riemannian manifolds. In \cite{LY80}, given that $M$ has nonnegative Ricci curvature, P. Li and Yau proved the lower bound $\pi^2/4D^2$ by using gradient estimate. Later Zhong and Yang \cite{ZY84} used a barrier argument to prove the sharp lower bound $\pi^2/D^2$ for compact Riemannian manifold with nonnegative Ricci curvature. Afterwards Kroger in \cite{Kr92} used a gradient comparison technique to recover the result of Zhong and Yang, and furthermore he was able to deal with a negative Ricci lower bound case. It was in Bakry and Emery's work \cite{BE86} that the situation is generalized into a manifold with weighted volume measure, and the Laplacian is replaced by a general elliptic diffusion operator $L$. By defining curvature-dimension condition, we can make sense of Ricci lower bound in the senario of smooth measure spaces. Later Bakry and Qian \cite{BQ00} used gradient comparison technique similar to Kroger to prove the sharp lower bound $\pi^2/D^2$ for $\lambda_1(L)$ assuming $M$ to be $BE(0,N)$ for some $N\geq 1$. Later Andrews and Ni \cite{AN12} recovered this result with a simple modulus of continuity method. 

In recent years there is much attention to the nonlinear operator called $p-$Laplacian $\Delta_p$. In \cite{Valtorta12} he showed the sharp estimate $\lambda_1\geq (p-1)\frac{\pi_p^p}{D^p}$ for Riemannian manifolds with Ricci lower bound $0$ and $p>1$, where $\pi_p$ is the half period of $p-$sine function which will be defined later. The method used is a gradient comparison via Bochner formula for $p$-Laplacian, and a fine ODE analysis of the one dimensional model solution. Later Naber and Valtorta \cite{NV14} extended the result to the case $\text{Ric}\geq \kappa(n-1)$ for $\kappa<0$. The key improvement is the better understanding of the one dimensional model equation in the $\kappa<0$ case, which is considerably more complicated than non-negative case. Very recently Li-Wang in \cite{LW19eigenvalue} and \cite{LW19eigenvalue2} used a modulus of continuity method to get the gradient comparison, in the case of drifted $p-$Laplacian, which fits in the setting of a Bakry-Emery manifold with weight $e^{-f}$, thus opening the possibility of studying the non-linear version of $L$ operator with drifted terms in metric measure spaces Satisfying $BE(\kappa,N)$. For $\kappa=0$ case, \cite{Ko18} showed that $(p-1)\frac{\pi_p^p}{D^p}$ is the sharp lower bound. 

In this paper we follow the approaches of \cite{BQ00} and \cite{NV14} to study the non-linear operator $L_p$ on a compact manifold with possibly convex boundary(to be defined later). We extend the Theorem 1.1 of \cite{Ko18} to the $\kappa\neq 0$ case, more precisely:

\begin{theorem}
\label{Main Theorem}
Let $M$ be compact and connected and $L$ be an elliptic diffusion operator with invariant measure $m$. Assume that $L$ satisfies $BE(\kappa,N)$ where $\kappa\neq 0$. Let $D$ be diameter defined by the intrinsic distance metric on $M$. Let $u$ be an eigenfunction associated with $\lambda$
satisfying Neumann boundary condition if $\p M \neq \emptyset$, where $\lambda$ is the first nonzero eigenvalue of $L_p$. Then denoting $w^{(p-1)} := |w|^{p-2}w$, we have
\begin{itemize}[leftmargin=0.5in]
    \item[(1)] When $\kappa>0$, assuming further that $D\leq \pi/\sqrt{\kappa}$, we have a sharp comparison:
    $$\lambda\geq \lambda_D$$
    where $\lambda_D$ is the first nonzero eigenvalue of the following Neumann eigenvalue problem on $[-D/2,D/2]$:
    $$\frac{d}{dt}\big[(w')^{(p-1)}\big]-(n-1)\sqrt{\kappa}\tan(\sqrt{\kappa} t)(w')^{(p-1)}+\lambda w^{(p-1)}=0$$
    \item[(2)] When $\kappa<0$, we have a sharp comparison:
    $$\lambda\geq \lambda_D$$
    where $\lambda_D$ is the first nonzero eigenvalue of the following Neumann eigenvalue problem on $[-D/2,D/2]$:
    $$\frac{d}{dt}\big[(w')^{(p-1)}\big]+(n-1)\sqrt{-\kappa}\tanh(\sqrt{-\kappa t})(w')^{(p-1)}+\lambda w^{(p-1)}=0$$
\end{itemize}
\end{theorem}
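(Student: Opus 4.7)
My plan is to adapt the gradient-comparison strategy of Bakry--Qian and Kr\"oger together with the one-dimensional model analysis of Naber--Valtorta, extending Koerber's $\kappa=0$ argument to $\kappa\ne 0$ and to the nonlinear drifted operator $L_p$. The first ingredient is the one-dimensional ODE from the theorem statement, analyzed on intervals contained in $(-\pi/(2\sqrt{\kappa}),\pi/(2\sqrt{\kappa}))$ in case (1) and in all of $\RR$ in case (2). I construct an odd, increasing solution $w$ with $w'(\pm D/2)=0$ and identify $\lambda_D$ as the least parameter admitting such a solution. In case (1) the weight $\cos^{n-1}(\sqrt{\kappa}\,t)$ is log-concave and a shooting argument in the style of \cite{Ko18} suffices. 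In case (2) the weight $\cosh^{n-1}(\sqrt{-\kappa}\,t)$ is log-convex and, as in \cite{NV14}, a two-parameter family of comparison functions (allowing $\max w\le 1$ rather than $=1$, together with a horizontal translation) is needed so that the gradient comparison below closes up.

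The second ingredient is the $L_p$-Bochner inequality under $BE(\kappa,N)$. The $\Gamma_2$-calculation combined with the Cauchy--Schwarz device of Bakry--Emery supplies the basic $\kappa|\nabla u|^2+\tfrac{1}{N}(Lu)^2$ control; together with the nonlinear Hessian correction proportional to $p-2$ and the eigenvalue equation $L_p u=-\lambda u^{(p-1)}$, this yields a pointwise differential inequality for $v:=|\nabla u|^2$ that is exactly saturated by $(w')^2$ under the model ODE. This is the nonlinear drifted analogue of the Bochner inequalities used in \cite{BQ00}, \cite{Valtorta12}, \cite{NV14}, and \cite{Ko18}.

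With these two ingredients in hand, the gradient comparison runs as follows. After renormalizing $u$ so that $[\min u,\max u]\subset[w(-D/2),w(D/2)]$, set $\varphi:=w^{-1}\circ u$ and $F:=|\nabla u|^p-(w'\circ\varphi)^p$; I claim $F\le 0$ on $M$. If not, let $x_0$ be a maximum point of $F$. At an interior $x_0$, substituting the Bochner bound into $L_p F$ and using that $w$ saturates the model ODE produces $L_p F(x_0)>0$, contradicting the maximum principle; the $\kappa$- and $\lambda$-contributions cancel against the drift term of the model ODE. If $x_0\in\p M$, I rule out a boundary maximum by a Reilly--Hopf computation using convexity of $\p M$, the Neumann condition on $u$, and $w'(\pm D/2)=0$ in the model, with extra care at points where $\nabla u$ vanishes and $L_p$ degenerates.

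Once the pointwise bound $|\nabla u|\le w'\circ\varphi$ holds, I conclude geometrically. Let $x_\pm$ realize $\min u$ and $\max u$, join them by a minimizing geodesic of length $\ell\le D$, and integrate $du/ds\le w'(\varphi(u))$ along it to get
\begin{equation*}
D\;\ge\;\int_{\min u}^{\max u}\frac{du}{w'(\varphi(u))}\;=\;\varphi(\max u)-\varphi(\min u);
\end{equation*}
the two-parameter normalization from the model analysis then pushes the right-hand side up to the full length of $[-D/2,D/2]$, giving $\lambda\ge\lambda_D$. I expect the hard part to be the model analysis in case (2): the ODE is nonautonomous and half-linear with no closed-form Pr\"ufer phase, and a delicate monotonicity analysis of the two-parameter family of solutions, in the spirit of \cite{NV14}, is required to justify the normalization underlying the comparison step.
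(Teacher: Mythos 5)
You follow essentially the paper's own strategy (the $L_p$ Bochner inequality under $BE(\kappa,N)$, a maximum-principle gradient comparison against a one-dimensional model, then a diameter/monotonicity argument in the style of Naber--Valtorta), but there is a genuine gap at the final step. The inequality $D\ge\varphi(\max u)-\varphi(\min u)$ only gives $D\ge\delta(i,a)$, the full length of the model's monotone interval, if the chosen model solution has \emph{exactly} the same range as $u$, i.e. $\max w=\max u$ after normalizing $\min w=\min u=-1$. The model families realize only a restricted window of maxima: for $\kappa<0$ the $\cosh$-, $\exp$- and $\sinh$-weight models together produce maxima in $[m(1,0),1]$, and for $\kappa>0$ the admissible maxima lie in $[m(0,-\pi/(2\sqrt{\kappa})),1]$. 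Nothing in your proposal excludes the possibility that $\max u$ falls below these thresholds, in which case no comparison model matches the range of $u$ and your claim that the normalization ``pushes the right-hand side up to the full length of $[-D/2,D/2]$'' breaks down. The paper closes this hole with a separate maximum comparison (Section 7): a monotonicity statement for the weighted integral $E(s)$ taken from Naber--Valtorta, a resulting small-ball estimate $m(B_{x_0}(r))\le c\,r^{n}$ at the minimum point of $u$ whenever $\max u$ is below the threshold, and the Bishop--Gromov volume comparison, which together force $\max u\ge m(1,0)$ (resp. $\max u\ge m(0,-\pi/(2\sqrt{\kappa}))$). This measure-theoretic ingredient is entirely absent from your plan, and it is not a routine detail: it is the only place where a lower volume bound enters, and without it the eigenvalue estimate does not follow from the gradient comparison alone.

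Two smaller points. For $\kappa<0$, translates within the $\cosh^{n-1}$ family are not enough: as in Section 6 of the paper one needs all three weights built from $\sinh$, $\exp$ and $\cosh$ (the models $T_1,T_2,T_3$), because for $\alpha>\bar{\alpha}$ the $\cosh$-models only realize maxima in $(m_2,1]$ while the $\sinh$-models are required to reach down to $m(1,0)$; this is precisely why the maximum comparison above is stated with the threshold $m(1,0)$. And in the interior maximum-principle step, substituting the Bochner bound only yields a non-strict inequality at the maximum point; to obtain an actual contradiction the paper replaces $\kappa$ by $\tilde{\kappa}<\kappa$ in the model equation (and scales the comparison function by the extremal constant $c_0$), which produces the strict surplus $(\kappa-\tilde{\kappa})\Gamma(u)^{p-1}>0$, and then lets $\tilde{\kappa}\to\kappa$; some perturbation of this kind is needed to make your assertion that $L_pF(x_0)>0$ rigorous, along with the separate treatment of the degenerate regularity regime $1<p<2$ near points where $u=0$.
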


The rest of this paper is devoted to the proof of Theorem \ref{Main Theorem}. The basic structure is the following. In section 2 we introduce the setting and definitions related to the linear elliptic diffusion operator $L$. In section 3 we define the non-linear operator $L_p$ and its Neumann eigenvalue problem. In section 4 we use the Bochner formula to derive a useful estimate (Prop 4.2) which will be used in proving the gradient comparison theorem in section 5. In section 6 we study the associated three one-dimensional model equations and combined with section 7, we get maximum comparison between our model solutions and the solution to the Neumann eigenvalue problem. Finally combining the gradient, maximum and diameter comparison we prove the theorem in section 8.
\vspace{0.5cm}\\
\textbf{Acknowledgment.} The author would like to thank his advisor Professor Lei Ni for lots of encouragement and helpful suggestions, and Dr. Xiaolong Li for explaining his paper with Kui Wang \cite{LW19eigenvalue} and \cite{LW19eigenvalue2} to him.
\section{The geometry of elliptic diffusion operators}
In this section we give some definitions which will be used later in our proof. First we introduce the elliptic diffusion operator, which is a natural generalization of a second order linear differential operator on a Riemannian manifold.

\begin{definition}
A linear second order operator $L:C^{\infty}(M)\to C^{\infty}(M)$ is called an elliptic diffusion operator if for any $\Phi:\mathbb{R}^r\to\mathbb{R}$ we have 
$$L(\Phi(f_1,f_2,\dots,f_r))=\sum_{i=1}^r\partial_i\Phi L(f_i)+\sum_{i,j=1}^r\partial_i\partial_j\Phi\Gamma(f_i,f_j)$$
and $\Gamma(f,f)\geq 0$ with equality if and only if $df=0$. Here $\Gamma$ is defined as
$$\Gamma(f,g):=\frac{1}{2}\big(L(fg)-fLg-gLf\big).$$
\end{definition}

\begin{definition}
We say that a locally finite Borel measure $m$ is L-invariant if there is a generalized function $\nu$ such that
$$\int_M\Gamma(f,h)dm=-\int_MfLhdm+\int_{\partial M}f\Gamma(g,\nu)dm$$
holds for all smooth $f,g$. $\nu$ is called the outward normal function and is defined to be a set of pairs $(\nu_i, U_i)_{i\in I}$ for a covering $U_i$ of $\partial M$ such that $\nu_i\in C^\infty(U_i)$ and $\Gamma(\nu_i-\nu_j, \cdot)|_{U_i\cap U_j}=0$.
\end{definition}

\begin{definition}
We define the intrinsic distance $d:M\times M\to[0,\infty]$ as:
$$d(x,y):=\sup\Big\{f(x)-f(y)|f\in C^\infty(M),\Gamma(f)\leq 1\Big\}$$
and the diameter of $M$ by $D:=\sup\{d(x,y)|x,y\in M\}$.
\end{definition}

\begin{definition}
For any $f$, $u,v\in C^\infty(M)$, we define the Hessian by
$$H_f(u,v)=\frac{1}{2}\Big(\Gamma(u,\Gamma(f,v))+\Gamma(v,\Gamma(f,u))-\Gamma(f,\Gamma(u,v))\Big)$$
and the $\Gamma_2$-operator by
$$\Gamma_2(u,v)=\frac{1}{2}\Big(L(\Gamma(u,v))-\Gamma(u,Lv)-\Gamma(v,Lu)\Big).$$
\end{definition}

\begin{definition}
We can define the $N$-Ricci curvature as
$$\text{Ric}_N(f,f)(x)=\inf\Big\{\Gamma_2(\phi,\phi)(x)-\frac{1}{N}(L\phi)^2(x)\Big|\phi\in C^\infty(M),\Gamma(\phi-f)(x)=0\Big\}$$
and let $\text{Ric}=\text{Ric}_\infty$.
\end{definition}
Let $\kappa\in\mathbb{R}$ and $N\in [1,\infty]$, we say that $L$ satisfies $BE(\kappa,N)$ condition if and only if
$$\text{Ric}_N(f,f)\geq \kappa\Gamma(f).$$
for any $f\in C^\infty(M)$.
\vspace{0.2cm}

If $M$ has a boundary, the geometry of $\partial M$ also plays an important role in the eigenvalue estimate. We define the convexity of $\partial M$ as follows.

\begin{definition}
Let $\nu$ be the outward normal direction, and $U\subset M$ be an open set, $\phi$,$\eta\in C^\infty(U)$ such that $\Gamma(\nu,\eta)$ and $\Gamma(\nu,\phi)=0$ on $U\cap\partial M$. We define the second fundamental form on $\partial M$ by 
$$II(\phi,\eta)=-H_\phi(\eta,\nu)=-\frac{1}{2}\Gamma(\nu,\Gamma(\eta,\phi)).$$
If for any $\phi$ as above with $\Gamma(\phi)>0$ on $U\cap \partial M$ we have
$$II(\phi,\phi)\leq 0\qquad \text{ on }U\cap\partial M$$
Then we say $\partial M$ is convex. If we have strict inequality then $\partial M$ is called strictly convex.
\end{definition}

\section{The generalized $p$-Laplacian and its eigenvalue problem} 

Now we are going to work on the eigenvalue problem of the non-linear operator $L_p$ derived from the previously defined $L$. The generalized $p$-Laplacian is defined by 

\begin{equation*}
 L_p u(x)=\begin{cases} \G(u)^{\frac{p-2}{2}} \left(Lu +(p-2)\frac{H_u(u,u)}{\G(u)} \right) & \text{ if } \G(u)(x) \neq 0 ;\\
 0 & \text{ otherwise. }
 \end{cases}   
\end{equation*}

We also define 
\begin{equation*}
    \L_p^u (\eta) =\begin{cases} \G(u)^{\frac{p-2}{2}} \left(L\eta +(p-2)\frac{H_{\eta}(u,u)}{\G(u)} \right) & \text{ if } \G(u)(x) \neq 0 ;\\
 0 & \text{ otherwise. }
 \end{cases}   
\end{equation*}
which is the linearization of $L_p$. Now we define the eigenvalue of $L_p$. If $\lambda\in\mathbb{R}$ and $u\in C^2(M)$ satisfies the Neumann boundary problem:
$$\begin{cases}
L_pu=-\lambda u|u|^{p-2}\qquad\text{ on } M^\circ\\
\Gamma(u,\tilde{\nu})=0\qquad\text{ on }\partial M
\end{cases}$$
Then we call $\lambda$ an eignevalue, and $u$ an eigenfunction of $L_p$, however, we may not always find a classical solution. To define the eigenfunction in a weak sense, we first use the invariance of $m$ to deduce the following integration-by-parts formula:

\begin{lemma}
Let $\phi\in C^\infty(M)$ and $u\in C^2(M)$ and $\Gamma(u)>0$ on $\text{supp}(\phi)$. Then we have
$$\int_M\phi L_pudm=-\int_M\Gamma(f)^\frac{p-2}{2}\Gamma(f,\phi)dm+\int_{\partial M}\Gamma(f,\tilde{\nu})\Gamma(f)^\frac{p-2}{2}\phi dm$$
\end{lemma}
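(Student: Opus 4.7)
The plan is to deduce the identity directly from the $L$-invariance of $m$, applied to the auxiliary test function $\psi:=\Gamma(u)^{(p-2)/2}\phi$. Since $\Gamma(u)>0$ on $\mathrm{supp}(\phi)$ and $u\in C^2$, the factor $\Gamma(u)^{(p-2)/2}$ is $C^1$ on $\mathrm{supp}(\phi)$, so $\psi$ extends by zero to a $C^1$ function on $M$ supported in $\mathrm{supp}(\phi)$. If the invariance property of $m$ is only recorded for $C^\infty$ test functions, I would recover it for this $\psi$ by mollification on the sets $\{\Gamma(u)>\varepsilon\}\cap\mathrm{supp}(\phi)$ and letting $\varepsilon\to 0$. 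Granted this, invariance yields
\[
\int_M \Gamma(u,\psi)\,dm \;=\; -\int_M \psi\,Lu\,dm + \int_{\partial M} \psi\,\Gamma(u,\tilde\nu)\,dm.
\]

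Next I would simplify $\Gamma(u,\psi)+\psi Lu$ using the calculus implied by the diffusion property. The product rule $\Gamma(u,\alpha\beta)=\alpha\Gamma(u,\beta)+\beta\Gamma(u,\alpha)$ (from the diffusion property with $\Phi(x,y)=xy$) combined with the chain rule $\Gamma(u,F(v))=F'(v)\Gamma(u,v)$ applied to $F(t)=t^{(p-2)/2}$ and $v=\Gamma(u)$ gives
\[
\Gamma(u,\psi) \;=\; \Gamma(u)^{(p-2)/2}\Gamma(u,\phi) + \tfrac{p-2}{2}\,\phi\,\Gamma(u)^{(p-4)/2}\,\Gamma\!\bigl(u,\Gamma(u)\bigr).
\]
The algebraic core is the identity $\Gamma(u,\Gamma(u))=2H_u(u,u)$, which follows immediately from the definition of the Hessian upon setting $f=v=u$: the three summands collapse to a single copy of $\Gamma(u,\Gamma(u,u))$, whence $H_u(u,u)=\tfrac12\Gamma(u,\Gamma(u))$.

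Substituting this and combining with $\psi Lu=\Gamma(u)^{(p-2)/2}\phi\, Lu$ yields
\[
\Gamma(u,\psi)+\psi Lu \;=\; \Gamma(u)^{(p-2)/2}\Gamma(u,\phi) + \phi\,\Gamma(u)^{(p-2)/2}\!\left(Lu + (p-2)\tfrac{H_u(u,u)}{\Gamma(u)}\right),
\]
and the second term on the right is exactly $\phi L_p u$ by the definition of the generalized $p$-Laplacian. Plugging back into the invariance identity and noting that $\psi\,\Gamma(u,\tilde\nu)=\Gamma(u)^{(p-2)/2}\phi\,\Gamma(u,\tilde\nu)$ on $\partial M$ produces the claim after a single rearrangement. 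The main (and essentially only) obstacle is the regularity bookkeeping for $\psi$ when $p<2$, which is neutralized by the standing assumption $\Gamma(u)>0$ on $\mathrm{supp}(\phi)$; everything else is a clean application of the diffusion calculus.
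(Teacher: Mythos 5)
Your proof is correct and takes exactly the route the paper intends: the paper states this lemma without a written proof, merely noting it follows from the $L$-invariance of $m$, and your argument (applying invariance to $\psi=\Gamma(u)^{(p-2)/2}\phi$, using the derivation/chain-rule property of $\Gamma$ and the identity $\Gamma(u,\Gamma(u))=2H_u(u,u)$ to reassemble $\phi L_p u$) is the standard way to carry this out, including fixing the paper's typo of $f$ for $u$. Your regularity remark is also sound; in fact, since $M$ is compact, $\Gamma(u)$ is bounded below by a positive constant on $\mathrm{supp}(\phi)$, so a single mollification of the $C^1$ function $\psi$ suffices without the $\varepsilon$-exhaustion.
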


So we define the eigenvalue and eigenfunction by
\begin{definition}
We say that $\lambda$ is an eigenvalue of $L_p$ if there is a $u\in W^{1,p}(M)$ such that for any $\phi\in C^\infty(M)$ the following identity holds:
$$\int_M\Gamma(u)^\frac{p-2}{2}\Gamma(u,\phi)dm=\lambda\int_M\phi u|u|^{p-2}dm$$
\end{definition}

We have the following result concerning the regularity of principal eigenfunctions.

\begin{lemma}(Lemma 2.2 in \cite{Ko18})
If $M$ is a compact smooth Riemannian manifold with an elliptic diffusion operator $L$ and an $L$-invariant measure $m$. Then the principal eigenfunction is in $C^{1,\alpha}(M)$ for some $\alpha>0$, and $u$ is smooth near points $x\in M$ such that $\Gamma(u)(x)\neq0$ and $u(x)\neq 0$; for $p<2$, $u$ is $C^{3,\alpha}$,  and for $p>2$, $u$ is $C^{2,\alpha}$ near $x$ where $\Gamma(u)(x)\neq0$ and $u(x)=0$.
\end{lemma}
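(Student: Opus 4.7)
The plan is to read the weak equation
$$\int_M \Gamma(u)^{\frac{p-2}{2}}\,\Gamma(u,\phi)\,dm \;=\; \lambda\int_M \phi\,u|u|^{p-2}\,dm$$
as a divergence-form quasilinear elliptic equation of generalized $p$-Laplacian type, and then invoke the standard regularity theory for such operators. In local coordinates $L=a^{ij}\partial_i\partial_j+b^i\partial_i$ with $a^{ij}$ smooth and uniformly positive definite, so that $\Gamma(u,\phi)=a^{ij}\partial_i u\,\partial_j\phi$; setting $A^i(x,\xi):=(a^{kl}(x)\xi_k\xi_l)^{(p-2)/2}a^{ij}(x)\xi_j$, the weak equation takes the form
$$\int A^i(x,\nabla u)\,\partial_i\phi\,d\mu \;=\; \lambda\int \phi\,u|u|^{p-2}\,d\mu,$$
and the field $A$ satisfies the Tolksdorf--DiBenedetto structure conditions.

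The first step is to apply the global $C^{1,\alpha}$ theory of DiBenedetto and Tolksdorf (with Lieberman's boundary extension for the Neumann condition $\Gamma(u,\tilde\nu)=0$) to conclude that $u\in C^{1,\alpha}(M)$ for some $\alpha>0$, independently of how close $\Gamma(u)$ comes to zero.

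Higher interior regularity is obtained by bootstrapping through the linearized equation. At a point $x_0$ with $\Gamma(u)(x_0)\neq 0$ the nonlinear coefficients $\xi\mapsto(a^{kl}\xi_k\xi_l)^{(p-2)/2}a^{ij}\xi_j$ are smooth and nondegenerate in $\xi$ on the range of $\nabla u$ over a small neighborhood $U$, so $L_p$ is uniformly elliptic there and the linearization $\mathcal{L}_p^u$ is a linear elliptic operator with H\"older coefficients. If additionally $u(x_0)\neq 0$, the right-hand side $u|u|^{p-2}$ is smooth in $u$, and the usual Schauder bootstrap upgrades $u\in C^{1,\alpha}$ successively to $C^{2,\alpha}$, $C^{3,\alpha}$, and so on, yielding $u\in C^{\infty}(U)$.

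The main obstacle is the remaining case $u(x_0)=0$, $\Gamma(u)(x_0)\neq 0$, where the nonlinearity $g(t):=|t|^{p-1}\operatorname{sgn}t$ loses smoothness at $t=0$. On a neighborhood where $\Gamma(u)>0$ the linearized operator is still uniformly elliptic, so Schauder theory turns H\"older regularity of $g(u)$ into two extra derivatives of $u$; the real task is to count carefully how much regularity survives the composition with $u$. For $p<2$, the H\"older class $g\in C^{0,p-1}$ combined with $u\in C^{1,\alpha}$ and the non-degeneracy of the zero set $\{u=0\}$ (which is locally a smooth hypersurface, transverse to $\nabla u$) feeds the bootstrap long enough to reach $u\in C^{3,\alpha}$. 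For $p>2$, $g$ is $C^1$ but $g'$ is only H\"older at $0$, so a single Schauder step exhausts the available smoothness of the nonlinearity and the bootstrap terminates at $u\in C^{2,\alpha}$. Making these bookkeeping steps rigorous, compatibly with the quasilinear coefficient $\Gamma(u)^{(p-2)/2}$, is the delicate part of the proof and follows the argument in Koerber \cite{Ko18}.
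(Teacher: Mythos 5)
Your overall strategy for the first two assertions is exactly the standard route: read the weak equation as a quasilinear divergence-form equation with Tolksdorf--DiBenedetto structure, get global $C^{1,\alpha}$ from DiBenedetto--Tolksdorf with Lieberman's boundary theory for the Neumann condition, and then run a Schauder bootstrap on the (uniformly elliptic, where $\Gamma(u)\neq 0$) nondegenerate form of the equation to get smoothness near points with $u\neq 0$ and $\Gamma(u)\neq 0$. The paper itself offers no proof --- it simply cites Koerber's Lemma 2.2 --- and that cited proof follows the same outline, so up to this point you are fine.

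The gap is in the delicate case $u(x_0)=0$, $\Gamma(u)(x_0)\neq 0$, where your ``bookkeeping'' is asserted rather than carried out, and the asserted count is in fact backwards. Writing the equation nondegenerately near $x_0$ (divide by $\Gamma(u)^{(p-2)/2}>0$), the regularity of the right-hand side is governed by $g(t)=|t|^{p-2}t$ composed with $u$. For $1<p<2$, $g$ is only $C^{0,p-1}$ near $t=0$, and Schauder theory with a merely H\"older right-hand side gives $C^{2,\alpha}$ and nothing more; no appeal to transversality of the zero set can push this to $C^{3,\alpha}$, because the obstruction is genuine: in the one-dimensional model (6.1) (take $T\equiv 0$, $w=\sin_p$) one has, near a zero with $w'\neq 0$, $w''\sim -\operatorname{sgn}(t)|t|^{p-1}$, hence $w'''\sim -(p-1)|t|^{p-2}$ blows up and $w\notin C^{3}$ when $p<2$. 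Conversely, for $p>2$ one has $g\in C^{1,\min(1,p-2)}$, so after the first Schauder step the right-hand side is $C^{1,\beta}$ and one further step yields $C^{3,\beta}$; the bootstrap does not terminate at $C^{2,\alpha}$ as you claim. So the correct conclusion is $C^{2,\alpha}$ for $1<p<2$ and $C^{3,\alpha}$ for $p>2$; the statement as printed in the paper has these two cases interchanged (note that the remark following Theorem 5.1 uses precisely the corrected version: for $1<p<2$ the eigenfunction is only $C^{2,\alpha}$ near such points, which is why a limiting argument is needed there to apply the Bochner formula). Your sentence that the H\"older nonlinearity ``feeds the bootstrap long enough to reach $C^{3,\alpha}$'' for $p<2$ is not an argument and cannot be made into one; the fix is to do the honest Schauder count just described, which proves the corrected statement rather than the printed one.
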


\section{Bochner formula}
In this section, we will derive the Bochner formula and an estimate which is helpful to prove the gradient estimate in the next section. 

\begin{proposition}[Bochner formula]
Let $u\in C^{1,\alpha}(M)$ be a first eigenfunction of $L_p$, and $x\in M$ be a point such that $\G(u)(x)\neq $ and $u(x)\neq 0$. Then at $x$ we have the following formula: 
\begin{align*}
\frac{1}{p}\L_p^u \left(\G(u)^{\frac p 2} \right) = \G(u)^{\frac{p-2}{2}} \left(\G(L_p u, u) -(p-2)L_p u A_u \right)+\G(u)^{p-2} \left(\G_2(u,u) +p(p-2)A_u^2 \right)
\end{align*}
\end{proposition}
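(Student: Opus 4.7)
The plan is to compute $\tfrac{1}{p}\L_p^u(\G(u)^{p/2})$ directly from the definition of $\L_p^u$, expand the resulting expression via second-order chain rules for $L$ and for the Hessian $H$, and then reorganise the output using the linear Bochner identity $\tfrac{1}{2}L\G(u,u)=\G_2(u,u)+\G(Lu,u)$ together with the definition of $L_p u$ to match the claimed right-hand side. Throughout I write $A_u:=H_u(u,u)/\G(u)$, so that $L_pu=\G(u)^{(p-2)/2}(Lu+(p-2)A_u)$ by definition.

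The first step is to establish the two chain-rule identities
\begin{equation*}
L(\Phi(f))=\Phi'(f)\,Lf+\Phi''(f)\,\G(f,f),\qquad H_{\Phi(f)}(u,u)=\Phi'(f)\,H_f(u,u)+\Phi''(f)\,\G(f,u)^2,
\end{equation*}
the first being immediate from the diffusion axiom, the second following from the defining formula for $H$ combined with the Leibniz rule $\G(u,fg)=f\G(u,g)+g\G(u,f)$. I would also record the identity $\G(\G(u),u)=2H_u(u,u)=2A_u\G(u)$, which is a direct consequence of the definition of $H_u$ with $v=u$. Applying the two chain rules with $\Phi(t)=t^{p/2}$ and $f=\G(u)$ yields
\begin{align*}
L(\G(u)^{p/2})&=\tfrac{p}{2}\G(u)^{p/2-1}L\G(u)+\tfrac{p(p-2)}{4}\G(u)^{p/2-2}\G(\G(u),\G(u)),\\
H_{\G(u)^{p/2}}(u,u)&=\tfrac{p}{2}\G(u)^{p/2-1}H_{\G(u)}(u,u)+p(p-2)\G(u)^{p/2}A_u^2,
\end{align*}
while the definition of $H$ gives $H_{\G(u)}(u,u)=2\G(u,H_u(u,u))-\tfrac{1}{2}\G(\G(u),\G(u))$. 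Substituting these into $\tfrac{1}{p}\L_p^u(\G(u)^{p/2})=\tfrac{1}{p}\G(u)^{(p-2)/2}\bigl[L(\G(u)^{p/2})+(p-2)H_{\G(u)^{p/2}}(u,u)/\G(u)\bigr]$ produces a clean cancellation of the two $\G(\G(u),\G(u))$ contributions and leaves the compact identity
\begin{equation*}
\tfrac{1}{p}\L_p^u(\G(u)^{p/2})=\tfrac{1}{2}\G(u)^{p-2}L\G(u)+(p-2)\G(u)^{p-3}\G(u,H_u(u,u))+(p-2)^2\G(u)^{p-2}A_u^2.
\end{equation*}

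To finish, I would invoke Bochner to rewrite $\tfrac{1}{2}L\G(u)=\G_2(u,u)+\G(Lu,u)$, and then translate $\G(Lu,u)$ back into $\G(L_pu,u)$: differentiating $L_pu=\G(u)^{(p-2)/2}(Lu+(p-2)A_u)$ by the Leibniz rule for $\G$ and using $\G(\G(u),u)=2A_u\G(u)$ produces
\begin{equation*}
\G(L_pu,u)=\G(u)^{(p-2)/2}\bigl[\G(Lu,u)+(p-2)\G(H_u(u,u)/\G(u),u)\bigr]+(p-2)L_pu\,A_u,
\end{equation*}
and expanding $\G(H_u(u,u)/\G(u),u)=\G(u,H_u(u,u))/\G(u)-2A_u^2$ lets me trade the $\G(u,H_u(u,u))$ term in the compact identity above for the correct combination of $\G(L_pu,u)$, $L_pu\,A_u$, and residual $A_u^2$ contributions. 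The subtle part is the bookkeeping of the three separate sources of $A_u^2$: the $(p-2)^2$ produced naturally in the expansion of $\tfrac{1}{p}\L_p^u(\G(u)^{p/2})$ must balance against $p(p-2)-2(p-2)=(p-2)^2$, where $p(p-2)$ comes from the explicit $A_u^2$ term on the target right-hand side and $-2(p-2)$ comes from the last expansion.
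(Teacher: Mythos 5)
Your computation is correct, and it is essentially the standard derivation that the paper itself delegates to \cite{Ko18} (Lemma 3.1): the two chain rules for $L$ and $H$, the identity $\Gamma(\Gamma(u),u)=2A_u\Gamma(u)$, the cancellation of the $\Gamma(\Gamma(u),\Gamma(u))$ contributions, the rewriting $\tfrac12 L\Gamma(u)=\Gamma_2(u,u)+\Gamma(Lu,u)$, and the final bookkeeping $p(p-2)-2(p-2)=(p-2)^2$ all check out against the claimed identity. The only point worth stating explicitly is that the manipulations involve third derivatives of $u$, which is legitimate here because the regularity lemma quoted in Section 3 makes $u$ smooth near any point with $\Gamma(u)(x)\neq 0$ and $u(x)\neq 0$, exactly the hypotheses of the proposition.
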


\begin{proof}
c.f.\cite{Ko18}, Lemma 3.1.
\end{proof}

\begin{proposition}
Suppose $L$ satisfies $BE(\kappa, N)$ for some $\kappa\in \mathbb{R}$ and $N\in [1, \infty]$. Then for any $n \geq N$, we have for $n\in (1,\infty)$,
\begin{eqnarray*}
\G(u)^{p-2} \left(\G_2(u,u) +p(p-2)A_u^2\right) \geq \frac{(L_p u)^2}{n} +\frac{n}{n-1} 
\left( \frac{L_p u}{n} -(p-1)\G(u)^{\frac{p-2}{2}} A_u \right)^2 +\kappa \G(u)^{p-1}
\end{eqnarray*}
for $n=\infty$, 
\begin{equation*}
    \G(u)^{\frac p 2}  \left(\G_2(u,u) +p(p-2)A_u^2\right) \geq (p-1)^2\G(u)^{p-2}A_u^2 +\kappa \G(u)^{p-1},
\end{equation*}
for $n=1$, 
\begin{equation*}
    \G(u)^{\frac p 2}  \left(\G_2(u,u) +p(p-2)A_u^2\right) \geq (L_p u)^2 +\kappa \G(u)^{p-1}
\end{equation*}

\end{proposition}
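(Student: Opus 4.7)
The plan is to combine the Bochner identity with the refined Bakry-\'Emery inequality and then translate everything from the linear $Lu$ to the non-linear $L_pu$. Directly from the definition of $L_p$ we have, where $\Gamma(u)\neq 0$,
$$Lu = \Gamma(u)^{(2-p)/2}L_pu - (p-2)A_u,\qquad A_u := \frac{H_u(u,u)}{\Gamma(u)},$$
which lets us convert any lower bound on $\Gamma_2(u,u)$ phrased in $Lu$ and $A_u$ into one phrased in $L_pu$ and $A_u$.

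The one analytic ingredient I will invoke is a strengthened form of $BE(\kappa,N)$: for any $n\geq N$ and any $u$ with $\Gamma(u)(x)\neq 0$,
$$\Gamma_2(u,u) \geq \kappa\Gamma(u) + \frac{(Lu)^2}{n} + \frac{n}{n-1}\left(\frac{Lu}{n}-A_u\right)^2.$$
In the Riemannian picture this is the Cauchy-Schwarz bound $|\H u|^2 \geq u_{11}^2 + (\Delta u - u_{11})^2/(n-1)$ in an orthonormal frame with $e_1 = \nabla u/|\nabla u|$, combined with $\Gamma_2(u,u) - |\H u|^2 \geq \kappa\Gamma(u)$. In the abstract diffusion setting the same inequality is recovered by testing the definition of $\text{Ric}_N$ against a perturbation $\phi = u + \varepsilon\psi$ with $\Gamma(\psi)(x)=0$ and optimizing over the free parameter $L\psi(x)$. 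Taking $n\to\infty$ or $n\to 1$ specializes the inequality to the two boundary cases.

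With this in hand, the rest is bookkeeping. Set $v := \Gamma(u)^{(p-2)/2}$, $B := L_pu$, $C := vA_u$, so $vLu = B - (p-2)C$. For $n\in(1,\infty)$, multiplying the refined inequality by $v^2$ and expanding gives
$$v^2\Gamma_2(u,u) - \kappa\Gamma(u)^{p-1} \geq \frac{B^2 - 2(p-1)BC + (p^2-2p+n)C^2}{n-1},$$
after using $(n-1)(p-2)^2+(n+p-2)^2 = n(p^2-2p+n)$ and $(n-1)(p-2)+(n+p-2)=n(p-1)$. Adding $p(p-2)C^2$ to both sides turns the $C^2$ coefficient into $n(p-1)^2/(n-1)$ and, completing the square in $B$, reproduces the claimed $B^2/n + (n/(n-1))(B/n - (p-1)C)^2$. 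The $n=\infty$ case follows from the degenerate refinement $\Gamma_2 \geq \kappa\Gamma + A_u^2$ together with the identity $1+p(p-2)=(p-1)^2$; the $n=1$ case uses the rigidity forced by $BE(\kappa,1)$, namely $A_u = Lu$, so that $L_pu = (p-1)vLu$ and the claim reduces to an algebraic identity.

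The principal obstacle I expect is justifying the refined Bakry-\'Emery inequality cleanly in the abstract setting, since the naive bound $\Gamma_2\geq \kappa\Gamma + (Lu)^2/N$ is insufficient and one must extract a Hessian-in-the-gradient-direction contribution without access to an explicit orthonormal frame. Everything downstream from that step is purely algebraic coefficient-matching.
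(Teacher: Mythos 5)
Your proposal is correct and takes essentially the same route as the paper: the refined inequality $\Gamma_2(u,u)\geq\kappa\Gamma(u)+\frac{(Lu)^2}{n}+\frac{n}{n-1}\big(\frac{Lu}{n}-A_u\big)^2$ is exactly what the paper extracts from $BE(\kappa,N)$, and your conversion to $L_pu$ via $vLu=L_pu-(p-2)vA_u$ and the coefficient identities checks out (including the $n=1$ rigidity $A_u=Lu$ and the $n=\infty$ degeneration). The obstacle you flag is resolved in the paper precisely by your suggested mechanism made concrete: one tests the definition of $\mathrm{Ric}_N$ on compositions $\phi(u)$ (equivalently $\psi=\tfrac12(u-u(x))^2$, so that the cross term $\Gamma_2(u,\psi)(x)$ is coupled to $L\psi(x)$ through $A_u$) and imposes nonpositivity of the discriminant of the resulting quadratic in $\phi''$.
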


\begin{proof}
Following \cite{Ko18} Lemma 3.3, we can scale $u$ on both sides so that $\Gamma(u)(x)=1$. We can assume $n=N$ since $B(\kappa,N)$ implies $B(\kappa,n)$ for $n\geq N$. When $n=1$, by the curvature-dimension inequality and $Lu=\text{tr}H_u=A_u$, we get
$$\Gamma_2(u,u)+p(p-2)A_u^2\geq \kappa+(Lu)^2+p(p-2)A_u^2=\kappa+(p-1)^2A_u^2=(L_pu)^2+\kappa.$$
When $n=\infty$, we have $\Gamma_2(u,u)\geq\kappa+A_u^2$, therefore $\Gamma_2(u,u)+p(p-2)A_u^2\geq \kappa+(p-1)^2A_u^2$. Now if $1<n<\infty$, for any $v\in C^{\infty}(M)$, by the curvature-dimension inequality we have 
$$
    \Gamma_2(v,v)\geq \kappa\Gamma(v)+\frac{1}{N}(Lv)^2
$$
Now we consider a quadratic form $B(v,v)=\Gamma_2(v,v)-\kappa\Gamma(v)-\frac{1}{N}(Lv)^2$, which is non-negative for any $v\in C^{\infty}(M)$. Let $v=\phi(u)$ where $\phi\in C^{\infty}(\mathbb
{R})$. Then by standard computations, together with the assumption $\Gamma(u)=1$, we have
\begin{align*}
\Gamma(\phi(u))&=(\phi')^2,\qquad L(\phi(u))=\phi'Lu+\phi'',\\
\Gamma_2(\phi(u),\phi(u))&=(\phi')^2\Gamma_2(u,u)+2\phi'\phi''A_u+(\phi'')^2.
\end{align*}
Then we get
\begin{align*}
    B(\phi(u),\phi(u))&=\Gamma_2(\phi(u),\phi(u))-\kappa\Gamma(\phi(u))-\frac{1}{N}(L(\phi(u)))^2\\
    &=(\phi')^2\Gamma_2(u,u)+2\phi'\phi''A_u+(\phi'')^2-\kappa(\phi')^2-\frac{1}{N}\Big[\phi'Lu+\phi''\Big]^2\\
    &=(\phi')^2B(u,u)+2\phi'\phi''(A_u-\frac{Lu}{N})+\frac{N-1}{N}(\phi'')^2
\end{align*}
Since $B(\phi(u),\phi(u))\geq 0$ for any $\phi$, we have non-positive discriminant 
$$B(u,u)\frac{N-1}{N}-\Big(A_u-\frac{Lu}{N}\Big)^2\leq 0$$
Therefore we have
\begin{align*}
    &\Gamma_2(u,u)+p(p-2)A_u^2\\
    =&\kappa+\frac{1}{N}(Lu)^2+B(u,u)+p(p-2)A_u^2\\
    \geq& \kappa+\frac{1}{N}(L_p(u)+(p-2)A_u)^2+\frac{N}{N-1}\Big(A_u-\frac{L_p(u)+(p-2)A_u}{N}\Big)^2+p(p-2)A_u^2\\
    =&\kappa+\frac{1}{N}(L_p(u))^2+\frac{N}{N-1}\Big(\frac{L_p(u)}{N}-(p-1)A_u\Big)^2
\end{align*}
\end{proof}

\section{Gradient Comparison Theorem and Its Applications}

In this section we prove the gradient comparison theorem of the eigenfunction with the solution to the one-dimensional model. 

\begin{thm}
\label{Gradient Comparison}
Let $u$ be a weak solution of 
\begin{equation*}
    L_p u=-\l u^{(p-1)}
\end{equation*}
satisfying Neumann boundary condition if $\p M \neq \emptyset$, where $\lambda$ is the first nonzero eigenvalue of $L_p$. Assume that $L$ satisfies $BE(\kappa,N)$. Let $T_\kappa:I\to\mathbb{R}$ be a function that satisfies $T_\kappa' = T^2/(N-1) + (N-1)\kappa$, and
$w:[a,b]\to \mathbb{R}$ be a solution of the following ODE:
\begin{align}\begin{cases}
\frac{d}{dt}\big[(w')^{(p-1)}\big]-T_\kappa (w')^{(p-1)}+\lambda w^{(p-1)}=0\\
w(a)=-1,\quad w'(a)=0
\end{cases}
\end{align}
such that $w$ is strictly increasing on $[a,b]$ and the range of $u$ is contained the range of $w$. 
Then for all $x\in M$, 
\begin{equation*}
    \G(w^{-1}(u(x))) \leq 1.
\end{equation*}

\end{thm}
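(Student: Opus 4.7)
The plan is to adapt the gradient comparison strategy of Bakry--Qian and Kroger, carried to the nonlinear setting by Valtorta, Naber--Valtorta and Koerber, to the present Bakry--\'Emery context with $\kappa \neq 0$. Set $\phi := w^{-1}\circ u$, so that the desired conclusion $\Gamma(\phi) \leq 1$ is equivalent to the pointwise inequality $\Gamma(u) \leq (w'\circ\phi)^2$. The auxiliary function designed to mesh with the Bochner identity of Proposition~4.1 is
\[
F := \tfrac{1}{p}\Gamma(u)^{p/2} - \tfrac{1}{p}(w'\circ\phi)^p,
\]
and the goal is to prove $F \leq 0$ on $M$ by a maximum principle argument. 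Observe already that at any $x$ with $\Gamma(u)(x) = 0$ one has $F(x) \leq 0$, so any positive maximum of $F$ must occur where $\L_p^u$ is uniformly elliptic.

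First I would reduce to a strict comparison by a perturbation: replace $w$ by a family $w_\eta$ solving the same ODE on an interval slightly larger than $[a,b]$, with the same Neumann-type initial data, so that $w_\eta' > 0$ on the closed comparison interval and $\mathrm{range}(u) \subsetneq \mathrm{range}(w_\eta)$. Proving $\Gamma(\phi_\eta) \leq 1$ for every $\eta > 0$ and letting $\eta \to 0$ yields the theorem. This also keeps $\phi_\eta = w_\eta^{-1}\circ u$ smooth away from critical points of $u$ and avoids the singularity of $w^{-1}$ at the endpoints.

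Suppose for contradiction that $F_\eta$ attains a positive maximum at an interior point $x_0 \in M^\circ$. Then $\Gamma(u)(x_0) > 0$, $\Gamma(F_\eta,\cdot)(x_0) = 0$, and $\L_p^u F_\eta(x_0) \leq 0$. I would then compute $\L_p^u(\tfrac{1}{p}\Gamma(u)^{p/2})$ via the Bochner formula of Proposition~4.1 combined with the eigenvalue equation $L_p u = -\lambda u^{(p-1)}$, and $\L_p^u((w_\eta'\circ\phi_\eta)^p)$ by the chain rule using the ODE satisfied by $w_\eta$. Substituting into the estimate of Proposition~4.2 and using $\Gamma(F_\eta)(x_0) = 0$ to eliminate the cross term involving $A_u$ in favour of derivatives of $w_\eta$, one arrives at a strictly positive lower bound for $\L_p^u F_\eta(x_0)$, contradicting $\L_p^u F_\eta(x_0) \leq 0$. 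The algebraic identity that makes this work is the defining relation $T_\kappa' = T_\kappa^2/(N-1) + (N-1)\kappa$, which is precisely what is needed for the one-dimensional profile $w_\eta$ to saturate the Bakry--\'Emery inequality of Proposition~4.2, so that the $\L_p^u F_\eta$ computation reduces to a manifestly nonnegative expression at the candidate maximum.

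If the maximum is attained on $\partial M$, the Neumann condition $\Gamma(u,\tilde\nu) = 0$ lets one rewrite $\Gamma(F_\eta,\tilde\nu)$ in terms of $II(u,u)$, which convexity of $\partial M$ renders nonpositive; Hopf's lemma then excludes a strict boundary maximum and reduces the problem to the interior case. The main obstacle will be the bookkeeping in the interior step: matching the Bochner estimate term by term with the ODE for $w_\eta$, and verifying that after the substitutions the leftover is a nonnegative multiple of $F_\eta$ itself, so that the contradiction at the putative maximum is unambiguous and no spurious sign creeps in. The $\kappa \neq 0$ correction introduces additional curvature drift terms compared with Koerber's $\kappa = 0$ argument, and these must be absorbed exactly by the $T_\kappa$ factor in the ODE; carrying out that cancellation precisely is the novel algebraic content of the proof.
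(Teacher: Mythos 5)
Your overall scheme---a maximum principle for $F=\tfrac1p\Gamma(u)^{p/2}-\tfrac1p\big(w'\circ w^{-1}(u)\big)^p$, the Bochner formula of Proposition 4.1 plus the $BE(\kappa,N)$ estimate of Proposition 4.2 at an interior maximum, and Neumann condition plus convexity of $\partial M$ to handle a boundary maximum---is the same as the paper's. But the two devices the paper's proof actually relies on are missing, and your substitutes do not work as stated. First, your perturbation is vacuous: a function ``solving the same ODE with the same initial data on a slightly larger interval'' is, by uniqueness, $w$ itself, so $w_\eta\equiv w$ and no slack is gained. The paper obtains its strictness by perturbing the curvature parameter instead: it proves the comparison with $T_\kappa$ replaced by $T_{\tilde\kappa}$ for $\tilde\kappa<\kappa$, so that at the putative maximum the computation yields $\tfrac1p\L_p^u\big(\Gamma(u)^{p/2}-\phi(u)\big)\geq(\kappa-\tilde\kappa)\Gamma(u)^{p-1}>0$, contradicting the second-derivative test, and then lets $\tilde\kappa\to\kappa$. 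With the unperturbed $T_\kappa$ (which, as you yourself note, makes the one-dimensional model \emph{saturate} Proposition 4.2) the same computation only gives $\L_p^u F(x_0)\geq 0$, which is no contradiction; your claimed ``strictly positive lower bound'' has no source.

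Second, the step you defer (``the leftover is a nonnegative multiple of $F_\eta$'') is exactly the step the paper avoids by a sliding-parameter normalization: it considers $Z_c=\Gamma(u)^{p/2}-\big(c\,w'\circ w^{-1}(u)\big)^p$ and takes $c_0=\inf\{c:\ Z_c>0\ \text{somewhere}\}$, so that the maximum point $x_0$ satisfies $Z_{c_0}(x_0)=0$. The resulting equality $\Gamma(u)^{p/2}=\phi_{c_0}(u)$ at $x_0$, combined with the first-order condition $\Gamma(Z,u)(x_0)=0$ and the ODE evaluated at $w^{-1}(u(x_0))$, is what eliminates $T_{\tilde\kappa}$ and $w''$ and reduces $\tfrac1p\L_p^u(\phi(u))$ to precisely the right-hand side of Proposition 4.2 plus the $\tilde\kappa$ term. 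At a merely positive maximum of your $F$ this substitution is unavailable, and whether the remaining terms have a favorable sign depends on the signs of $u(x_0)$ and $A_u$; you have not verified this, and it is the crux of the argument rather than bookkeeping. (Your boundary reduction via Neumann and convexity is consistent with the paper's; the regularity issue for $1<p<2$ at points where $u(x_0)=0$, treated in the paper's Remark 5.2 by a limiting argument, would also need a word.) In short: right strategy, but the two mechanisms that actually produce the contradiction---the $\tilde\kappa<\kappa$ slack and the touching-point normalization $Z_{c_0}(x_0)=0$---are absent, and what you propose in their place is either vacuous or left unproved.
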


\begin{proof}
By scaling $u$ so that $\min(u)=-1$, we can assume that the range of $u$ is contained in the range of $w$. By chain rule of $\Gamma$ what we need to show, equivalently, is
$$\Gamma(u)^{\frac12}(x)\leq w'(w^{-1}(u(x)))$$
for all $x\in M$. Since $T_\kappa$ depends smoothly on $\kappa$, we will first prove that for any $\Tilde{\kappa}<\kappa$, the gradient comparison holds when $T_{\kappa}$ is replaced by $T_{\Tilde{\kappa}}$ in (5.1), and then we can take $\Tilde{\kappa}\to \kappa$. This will give us a room to use proof by contradiction.\\

Now for $c>0$ we denote $\phi_c=(cw'\circ w^{-1})^p$, and consider the function $Z_c:M\to \mathbb{R}$
$$Z_c(x)=\Gamma(u)^{\frac{p}{2}}(x)-\phi_c(u(x))$$
Assume for contradiction that $Z_1(x) > 0$ for some $x\in M$. Let
$$c_0=\inf\{c:Z_c(x)> 0 \text{ for some }x\in M\}$$
By our definition of $c_0$, there is a $x_0\in M$ such that $Z_{c_0}(x_0)=0$ is the maximum of $Z_{c_0}$. Now we denote $Z_{c_0}$ as $Z$, $\phi_{c_0}$ as $\phi$ when there is no confusion. When $x_0$ is in the interior of $M$, this clearly implies the following equations:
\begin{align}
   & Z(x_0)=0\\
   & \Gamma(Z,u)(x_0)=0\\
   & \frac{1}{p}\L_p^u(Z)(x_0)\leq 0 
\end{align}

If $x_0\in\partial M$, since $\Gamma(u,\Tilde{\nu})=0$ by the Neumann boundary condition, we have that $\Gamma(Z,u)=0$ at $x_0$. Since $Z$ achieves maximum at $x_0$ and $\partial M$ is convex, we have
\begin{align*}
   0\leq \Gamma(Z,\Tilde{\nu})&=\Gamma(\Gamma(u)^\frac{p}{2}-\phi(u),\Tilde{\nu})=\frac{p}{2}\Gamma(u)^\frac{p-2}{2}\Gamma(\Gamma(u),\Tilde{\nu})-\phi'(u)\Gamma(u,\Tilde{\nu})\\
   &=-p\Gamma(u)^\frac{p-2}{2}II(u,u)-\phi'(u)\cdot 0\leq 0
\end{align*}
Therefore $\Gamma(Z,\Tilde{\nu})(x_0)=0$. This implies that the second derivative of $Z$ along the normal direction is nonpositive. On the other hand, the second derivatives along tangential directions are nonpositive, hence the ellipticity of $\L_p^u$ implies that $\L_p^u(Z)(x_0)\leq 0$. Hence we comfirmed the three equations above for all $x\in M$.\\

From (5.2) we get
$$\frac{p}{2}\Gamma(u)^{\frac{p-2}{2}}\Gamma(\Gamma(u),u)-\phi'(u)\Gamma(u)=0$$
which implies $\phi'(u)=p\Gamma(u)^{\frac{p-2}{2}}A_u$. Now by calculation we have
$$\frac{1}{p}\L_p^u(\phi(u))=\frac{1}{p}\Big(\phi'(u)L_pu+(p-1)\phi''(u)\Gamma(u)^\frac{p}{2}\Big)$$
By chain rule we have $\phi'=p\cdot\big[(w')^{p-2}\cdot w''\big]\circ w^{-1}$, and $\phi''=p\Big[(p-2)(w'')^2+w'''w'\Big]\cdot(w')^{p-4}\circ w^{-1}$, and by differentiating the ODE satisfied by $w$ we have
$$(p-1)(w')^{p-3}\Big[(p-2)(w'')^2+w'''w'\Big]=T_{\Tilde{\kappa}}'(w')^{p-1}+(p-1)T_{\Tilde{\kappa}}w''(w')^{p-2}-\lambda(p-1)w'w^{p-2}$$
Therefore
$$\phi''=p\cdot\frac{T_{\Tilde{\kappa}}'(w')^{p-1}+(p-1)T_{\Tilde{\kappa}}w''(w')^{p-2}-\lambda(p-1)w'w^{p-2}}{w'}\circ w^{-1}.$$
Now we evaluate the above expression at $u(x_0)$. Since $\phi'(u)=p\cdot\big[(w')^{p-2}\cdot w''\big]\circ w^{-1}(u)=p\Gamma(u)^\frac{p-2}{2}A_u$, and by (1) we have $\phi(u)=w'\circ w^{-1}(u)=\Gamma(u)^{\frac{p}{2}}$, we have
\begin{align}\frac{1}{p}\L_p^u(\phi(u))=-\lambda u^{(p-1)}\Gamma(u)^\frac{p-2}{2}A_u+T_{\Tilde{\kappa}}'\Gamma(u)^{p-1}+(p-1)T_{\Tilde{\kappa}}\Gamma(u)^\frac{2p-3}{2}A_u-\lambda(p-1)u^{p-2}\Gamma(u)^\frac{p}{2}
\end{align}
By the ODE evaluated at $w^{-1}(u(x_0))$, we have
$$(p-1)\Gamma(u)^\frac{p-2}{2}A_u-T_{\Tilde{\kappa}}\Gamma(u)^\frac{p-1}{2}+\lambda u^{(p-1)}=0$$
Hence
$$(p-1)T_{\Tilde{\kappa}}\Gamma(u)^\frac{2p-3}{2}A_u=(p-1)\big[(p-1)\Gamma(u)^\frac{p-2}{2}A_u+\lambda u^{(p-1)}\big]\Gamma(u)^\frac{p-2}{2}A_u.$$
Plugging the above equation into the third term of $(5.5)$, we have
\begin{align*}
\frac{1}{p}\L_p^u(\phi(u))=\lambda(p-2)u^{(p-1)}\G(u)^{\frac{p-2}{2}}A_u+T_{\Tilde{\kappa}}'\Gamma(u)^{p-1}+(p-1)^2\Gamma(u)^{p-2}A_u^2-\lambda(p-1)u^{p-2}\Gamma(u)^\frac{p}{2}
\end{align*}

We can assume that $\Tilde{\kappa}$ has the same sign as $\kappa$ since we can pick $\Tilde{\kappa}$ sufficiently close to $\kappa$. Now we have
$T_{\Tilde{\kappa}}'=T_{\Tilde{\kappa}}^2/(n-1)+{\Tilde{\kappa}}$ to rewrite the second term and finally get
\begin{align*}
    \frac{1}{p}\L_p^u(\phi(u))=&-\lambda u^{(p-1)}\Gamma(u)^\frac{p-2}{2}A_u+\frac{1}{n-1}\Big[\lambda u^{(p-1)}+(p-1)\Gamma(u)^\frac{p-2}{2}A_u\Big]^2+{\Tilde{\kappa}}\Gamma(u)^{p-1}\\
    &\quad+(p-1)^2\Gamma(u)^{p-2}A_u^2+(p-1)\lambda u^{(p-1)}\Gamma(u)^\frac{p-2}{2}A_u-\lambda(p-1)u^{p-2}\Gamma(u)^\frac{p}{2}\\
    =& (p-2)\lambda u^{(p-1)}\Gamma(u)^\frac{p-2}{2}A_u-\lambda(p-1)u^{p-2}\Gamma(u)^\frac{p}{2}+{\Tilde{\kappa}}\Gamma(u)^{p-1}\\
    &\quad +\frac{n}{n-1}\Big[\frac{\lambda u^{(p-1)}}{n}+(p-1)\Gamma(u)^\frac{p-2}{2}A_u\Big]^2+\frac{\lambda^2u^{2p-2}}{n}
\end{align*}
By Proposition 4.1 and 4.2, together with the fact that $L_pu = -\lambda u^{(p-1)}$ we have
\begin{align*}
    \frac{1}{p}\L_p^u \big(\G(u)^{\frac p 2}\big) & = \G(u)^{\frac{p-2}{2}} (\G(L_p u, u) -(p-2)L_p u A_u)+\G(u)^{p-2} (\G_2(u,u)+p(p-2)A_u^2)\\
    & \geq \G(u)^{\frac{p-2}{2}}\big(-\lambda(p-1)u^{(p-2)}\G(u) + \lambda(p-2)u^{(p-1)}A_u\big) + \frac{\lambda^2 u^{2p-2}}{n}\\
    &\quad+\frac{n}{n-1}\Big[\frac{\lambda u^{(p-1)}}{n}+(p-1)\G(u)^{\frac{p-2}{2}}A_u\Big]^2+\kappa\G(u)^{p-1}
\end{align*}

Hence in both cases, we have $\frac{1}{p}\L_p^u\big(\Gamma(u)^\frac{p}{2}-\phi(u)\big)\geq (\kappa-{\Tilde{\kappa}})\G(u)^{p-1}>0$, which is a contradiction with the second derivative test. Therefore we conclude that $Z_1\leq 0$ on $M$, which implies our gradient comparison result.
\end{proof}

\begin{remark}
When $1<p<2$ we know that $u\in C^{2,\alpha}$ near $x_0$, hence the Bochner formula can not be directly applied to $x_0$. In this case notice that $u$ does not vanish identically in a neighborhood of $x_0$, we can choose $x'\to x_0$ with $u'(x')\neq 0$. As we apply the Bochner formula at $x'$, The first term $\Gamma(u)^{\frac{p-2}{2}}\Gamma(L_pu,u)=-\lambda\Gamma(u)^{\frac{p-2}{2}}\Gamma(u^{(p-1)},u)$ since $u$ is a eigenfunction. Now this diverging term will cancel with $-\lambda(p-1)u^{p-2}\Gamma(u)^\frac{p}{2}$ in the expression of $\frac1p\L_p^u(\phi(u))$, which makes it possible to define $\frac{1}{p}\L_p^u\big(\Gamma(u)^\frac{p}{2}-\phi(u)\big)(x_0)$ to be the limit of $\frac1p\L_p^u(\phi(u))(x')$ as $x'\to x_0$. Therefore the previous proof still works when $1<p<2$.
\end{remark}
\section{One-dimensional Models}

In this section we will study the one-dimensional comparison model ODEs and discuss some fine properties of their solutions. Let $n\geq 2$, $p > 1$ be fixed. 
\subsection{The Model ODE}
Let $a\in\mathbb{R}$, $p > 1$ be fixed, we will consider the following form of initial value problem:
\begin{align}
\label{Model Equation}
\begin{cases}
\frac{d}{dt}(w')^{(p-1)}-T(t)\cdot(w')^{(p-1)}+\lambda w^{(p-1)}=0\\
w(a)=-1,\quad w'(a)=0
\end{cases}
\end{align}
where $T$ is defined over a subset of $\mathbb{R}$, to be specified according to the cases $\kappa > 0$ or $\kappa < 0$. To study this ODE we first define the $p$-$\sin$ and $p$-$\cos$ functions.

\begin{definition}
For every $p\in(1,\infty)$, let $\pi_p$ be defined by:
$$\pi_p=\int_{-1}^1\frac{ds}{(1-s^p)^\frac1p}=\frac{2\pi}{p\sin(\pi/p)}$$
The $C^1$ periodic function $\sin_p:\mathbb{R}\to[-1,1]$ is defined via the integral on $[-\frac{\pi_p}{2},\frac{3\pi_p}{2}]$ by
$$\begin{cases}
t=\int_{0}^{\sin_p(t)}(1-s^p)^{-\frac1p}ds\qquad &\text{ if }t\in\big[-\frac{\pi_p}{2},\frac{\pi_p}{2}\big]\\
\sin_p(t)=\sin_p(\pi_p-t)\qquad &\text{ if }t\in\big[\frac{\pi_p}{2},\frac{3\pi_p}{2}\big]
\end{cases}$$
and we extend it to a periodic function on $\mathbb{R}$. Let $\cos_p(t)=\frac{d}{dt}\sin_p(t)$, and we have the following identity which resembles the case of usual $\sin$ and $\cos$:
$$|\sin_p(t)|^p+|\cos_p(t)|^p=1.$$
\end{definition}

Let us use the Pr\"{u}fer transformation to study the model equation (6.1).

\begin{definition}[Pr\"{u}fer transformation] 
Let $\alpha=\big(\frac{\lambda}{p-1}\big)^\frac{1}{p}$, then for some solution $w$ of the ODE, we define functions $e$ and $\phi$ by 
$$\alpha w=e\sin_p(\phi)\quad w'=e\cos_p(\phi).$$
\end{definition}

Standard calculation shows that $\phi$ and $e$ satisfies the following first order systems:
\begin{align}
&\begin{cases}
\phi'=\alpha-\frac{T}{p-1}\cos_p^{p-1}(\phi)\sin_p(\phi)\\
\phi(a)=-\frac{\pi_p}{2}
\end{cases}
\\
&\begin{cases}
\frac{d}{dt}\log(e)=\frac{T}{p-1}\cos_p^p(\phi)\\
e(a)=\alpha
\end{cases}
\end{align}

\subsection{Choice of $T$ in the case $\kappa < 0$}

When $\kappa < 0$, we define $3$ functions $\tau_i$ on $I_i\subset \mathbb{R}$, $i=1,2,3$:

\begin{itemize}
    \item[(1)] $\tau_1(t)=\sinh(\sqrt{-\kappa}t)$
\item[(2)] $\tau_2(t)=\exp(\sqrt{-\kappa}t)$
\item[(3)]$\tau_3(t)=\cosh(\sqrt{-\kappa}t)$
\end{itemize}

and let $\mu_i=\tau_i^{n-1}$. Now we let $T_i=-\mu_i'/\mu_i$ and we get:
 
\begin{itemize}
    \item[(1)] $T_1(t)=-(n-1)\sqrt{-\kappa}\text{ cotanh}(\sqrt{-\kappa}t)$, defined on $I_1=(0,\infty)$;
    \item[(2)] $T_2(t)=-(n-1)\sqrt{-\kappa}$, defined on $I_2=\mathbb{R}$;
    \item[(3)] $T_3(t)=-(n-1)\sqrt{-\kappa}\tanh(\sqrt{-\kappa}t)$, defined on $I_3=\mathbb{R}$.
\end{itemize}

\subsection{Choice of $T$ in the case $\kappa > 0$}
When $\kappa > 0$, let $\tau_0(t) = \cos(\sqrt{\kappa}t)$ and $\mu_0 = \tau_0^{n-1}$. Then $T_0=\mu_0'/\mu_0$ is defined as
$$T_0(t) = (n-1)\sqrt{\kappa}\tan(\sqrt{\kappa}t)\quad\text{defined on } I_0 = (\frac{-\pi}{2\sqrt{\kappa}}, \frac{\pi}{2\sqrt{\kappa}})$$ 
\subsection{Fine analysis of the model equation (6.1)}

The central question we need to address here is the existence of solution to (6.1) whose range matches the range of $u$. Due to the normalization that $\min w = \min u = -1$, we will consider the maximum of $w$. For this purpose we introduce some notations. For $a\in\mathbb{R}$, let $w_{i,a}$ be the solution to the equation (6.1) with $T = T_i$, and $b(i,a)$ be the first critical point of $w_{i,a}$ after $a$. If $w'_{i,a}(t)>0$ for $t>a$, then we say $b(i,a)=\infty$. Also let $\delta_{i,a} = b(i,a)-a$ and $m(i,a) = w_{i,a}(b(i,a))$. We shall prove the following statement in the current and next section:

\begin{theorem}
\label{Existence of Exact Comparison}
Under the same setting as Theorem \ref{Main Theorem}, let $u$ be an eigenfunction of $L_p$ operator, normalized so that $\min u = -1$ and $\max u\leq 1$. Then we have the following existence results:
\begin{itemize}[leftmargin=0.3in]
    \item[(i)] ($\kappa > 0$) There is some $a\in I_0$ and a solution $w_{0,a}$ such that $m(0,a) = \max u$.
    \item[(ii)] ($\kappa < 0$) There is some $a\in\mathbb{R}$, $i\in\{1,2,3\}$ and a solution $w_{i,a}$ such that $m(i,a)=\max u$.
\end{itemize}
\end{theorem}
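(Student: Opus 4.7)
The plan is a shooting argument in the parameter $a$ (and, for $\kappa<0$, in the discrete index $i\in\{1,2,3\}$), combined with continuous dependence of ODE solutions on initial data and the intermediate value theorem. Since $u$ is a nontrivial first eigenfunction it must change sign, so the normalization $\min u=-1$ forces $0<\max u\leq 1$; the aim is to realize $\max u$ as some $m(i,a)$.

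First I would establish continuity of $a\mapsto m(i,a)$ on the open set of parameters with $b(i,a)<\infty$. Working in the Pr\"ufer coordinates (6.2)--(6.3), the first critical point of $w_{i,a}$ after $a$ is exactly the first time the angle $\phi$ reaches $\pi_p/2$. Transversality of this crossing (since $\phi'=\alpha>0$ whenever $\cos_p\phi=0$) together with smooth dependence of the Pr\"ufer system on $a$ yields smoothness of $b(i,a)$ in $a$ by the implicit function theorem, and hence continuity of $m(i,a)=w_{i,a}(b(i,a))$.

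Next I would compute the limiting values of $m(i,a)$ at the boundary of the parameter set. For case (i), $\kappa>0$: since $T_0$ is odd, the unique $a=a^*$ that makes the solution symmetric about $0$ gives $m(0,a^*)=1$, while as $a$ approaches the right endpoint of $I_0$ the coefficient $T_0$ blows up to $+\infty$, preventing $\phi$ from completing its half-period and forcing $m(0,a)\to 0$. Continuity then yields $(0,1]\subset\{m(0,a):a\in I_0\}$, and in particular $\max u$ is attained. For case (ii), $\kappa<0$: because $T_2$ is constant the $i=2$ family is translation-invariant and contributes a single value $m_*:=m(2,0)$. Using $T_1(\,\cdot\,+a)\to T_2$ on compacts as $a\to+\infty$ and $T_3(\,\cdot\,+a)\to T_2$ as $a\to-\infty$, one obtains $m(1,a),m(3,a)\to m_*$ in these limits; the opposite limits ($a\to 0^+$ for $i=1$, $a\to+\infty$ for $i=3$) produce $m\to 1$ by comparison with the symmetric profile. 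Chaining the three continuous branches through the common value $m_*$ shows $\bigcup_{i=1}^3\{m(i,a):a\in I_i\}\supseteq[m_*,1]$, and a separate argument (developed in Section 7 together with the diameter comparison) pins $\max u$ into this interval.

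The principal obstacle is the boundary analysis of $m(i,a)$, in particular the singular limits where $T$ diverges (the right endpoint of $I_0$ for $\kappa>0$, and $a\to 0^+$ for $T_1$) and the degenerate limits where $T_1$ or $T_3$ flattens to $T_2$. In each singular case one must estimate the Pr\"ufer angle $\phi$ to show it either fails to cross $\pi_p/2$ at all or does so only after absorbing an unbounded amount of decay in the amplitude $e$; in the flattening case, uniform convergence of the shifted Pr\"ufer system on any compact interval gives convergence of $m$ to $m_*$, but this requires care because the relevant interval $[a,b(i,a)]$ itself moves with $a$. A secondary issue, strict monotonicity of $m(i,a)$ in $a$ along each branch, can be handled via a direct variational (Jacobi-equation) calculation but is not strictly required for the IVT argument.
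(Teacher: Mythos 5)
Your shooting/IVT framework matches the paper's broad strategy (continuity and monotonicity of $a\mapsto m(i,a)$ plus intermediate value), but the way you close the argument for $\kappa>0$ contains a genuine gap. You claim that as $a$ tends to the right endpoint of $I_0$ the blow-up of $T_0$ forces $m(0,a)\to 0$, so that the family $\{m(0,a)\}$ covers all of $(0,1]$. This is not justified and is in fact the opposite of how the family behaves: writing $m(0,a)=\exp\big(\int_a^{b(0,a)}\tfrac{T_0}{p-1}\cos_p^p\phi\,dt\big)$ from the Pr\"ufer amplitude equation, moving $a$ to the right of the symmetric parameter $-\hat a$ only increases the positive contribution of $T_0$ relative to the negative one, so whenever a first critical point exists the maximum is $\geq 1$; and for $a$ close to $\pi/(2\sqrt{\kappa})$ the term $-\tfrac{T_0}{p-1}\cos_p^{p-1}\phi\sin_p\phi$ drives $\phi$ back below $\pi_p/2$, so $b(0,a)$ need not exist inside $I_0$ at all and $m(0,a)$ is undefined rather than small. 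The attainable maxima are only $[\,m(0,-\pi/(2\sqrt{\kappa})),\,1\,]$, with a strictly positive minimum at the pole-starting solution. Hence the IVT alone cannot finish: one must prove $\max u\geq m(0,-\pi/(2\sqrt{\kappa}))$, and this is exactly the content of the paper's Section 7 (monotonicity of the energy $E$, the small-ball volume bound $m(B_{x_0}(r))\leq cr^n$ at a minimum point of $u$, and Bishop--Gromov), an ingredient your proposal omits entirely in the $\kappa>0$ case. A smaller omission in the same case: the existence of an interior symmetric solution with maximum $1$ (i.e.\ $\hat a<\pi/(2\sqrt{\kappa})$) is not automatic; it requires the preliminary bound $\lambda>\lambda_0$, which the paper derives from the gradient comparison together with the hypothesis $D\leq\pi/\sqrt{\kappa}$.

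For $\kappa<0$ you do defer the lower bound on $\max u$ to Section 7, but the branch analysis is reversed and the interval you cover does not match the bound that Section 7 actually supplies. Following \cite{NV14} (as the paper does), $m(1,a)$ is \emph{increasing} with minimum $m(1,0)$ at $a=0$ and $m(1,a)\to m_2$ as $a\to\infty$ (not $m(1,a)\to 1$ as $a\to 0^+$), while $m(3,a)$ \emph{decreases} from the value $1$ at the symmetric parameter $a=-\bar a$ down to $m_2$. The three branches therefore cover $[m(1,0),1]$, and the maximum comparison of Section 7 gives precisely $\max u\geq m(1,0)$, which is strictly below $m_2=m_*$; with your claimed coverage $[m_*,1]$ the range $\max u\in[m(1,0),m_2)$ is left unhandled, and that is exactly the range for which the model $T_1$ is indispensable. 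Repairing these two points pushes you back to the paper's actual structure: continuity/monotonicity of $m(i,a)$ on the correct parameter ranges, combined with the Section 7 maximum comparison, rather than a self-contained shooting argument.
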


To prove Theorem \ref{Existence of Exact Comparison}, first we establish the following existence and uniqueness of a solution to the model equation.

\begin{proposition}
\label{Existence and Uniqueness}
There is a unique solution to the initial value problem (\ref{Model Equation}) with $T = T_i$, $i=1,2,3$ in the following cases:(1) $\kappa > 0$ and $a\in I_0 \cup \{-\pi/(2\sqrt{\kappa})\}$ and (2) $\kappa < 0$ and $a\in I_i \cup \{0\}$.
\end{proposition}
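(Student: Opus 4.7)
The plan is to recast (\ref{Model Equation}) as a first-order system via the Pr\"ufer transformation of Definition 6.2, under which the degenerate initial data $w(a) = -1$, $w'(a) = 0$ become the nondegenerate data $\phi(a) = -\pi_p/2$, $e(a) = \alpha$ on (6.2)--(6.3). Since $(\phi, e) \leftrightarrow (w, w')$ is a local diffeomorphism away from $e = 0$, existence and uniqueness for (\ref{Model Equation}) follow from the same for the Pr\"ufer system. In the interior case, where $a$ lies in the open part of $I_i$, the coefficient $T_i$ is smooth at $a$; since $\cos_p(-\pi_p/2) = 0$, (6.2) gives $\phi'(a) = \alpha > 0$, so $\phi$ is strictly increasing near $a$. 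One may then reparametrize $\log e$ as a function of $\phi$ near $\phi = -\pi_p/2$, obtaining a scalar ODE with continuous, locally Lipschitz right-hand side; Picard--Lindel\"of gives a unique local solution, which extends by standard continuation as long as $\phi$ remains in $(-\pi_p/2, \pi_p/2)$ and $T_i$ stays bounded.

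For $a$ at the singular endpoint (namely $a = -\pi/(2\sqrt{\kappa})$ when $\kappa > 0$, or $a = 0$ with $i = 1$ when $\kappa < 0$), $T_i$ has a simple pole $T_i(t) = -(n-1)/(t-a) + O(1)$, so the Pr\"ufer system is no longer regular. The crucial tool here is the divergence form of (\ref{Model Equation}),
\begin{equation*}
\bigl(\mu_i(t)(w')^{(p-1)}\bigr)' = -\lambda\, \mu_i(t)\, w^{(p-1)},
\end{equation*}
available because $T_i = -\mu_i'/\mu_i$ (with the analogous identification for $T_0$) and $\mu_i(t) \sim C(t-a)^{n-1}$ as $t \to a$. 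Integrating yields
\begin{equation*}
\mu_i(t)\, (w')^{(p-1)}(t) = -\lambda \int_a^t \mu_i(s)\, w^{(p-1)}(s)\, ds,
\end{equation*}
and combining with the a priori bound $|w| = 1 + o(1)$ near $a$ gives $|w'(t)|^{p-1} \leq C(t-a)$, hence $w'(t) \to 0$ as $t \to a^+$.

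I would prove existence by approximation: let $w_\varepsilon$ solve the interior problem starting at $a + \varepsilon$ (available from the interior case), then use the above estimates, uniformly in $\varepsilon$, to extract a $C^{1,\beta}_{\mathrm{loc}}$-convergent subsequence on $(a,\infty)$, and verify that the limit $w$ satisfies $w(a) = -1$, $w'(a) = 0$, and (\ref{Model Equation}). For uniqueness I would realize the integral identity above as a fixed-point problem for $w$ in the weighted norm $\|w\|_\delta := \sup_{t \in [a, a+\delta]}(t-a)^{-p/(p-1)}|w(t)+1|$; the $(n-1)$-fold vanishing of $\mu_i$ together with the H\"older continuity of $u \mapsto u^{(p-1)}$ make the map a contraction for $\delta$ sufficiently small.

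The main obstacle is the singular endpoint case, specifically controlling the behavior of $(w')^{(p-1)}$ near $t = a$ uniformly in the regularization parameter $\varepsilon$, and verifying that the limit attains $w'(a) = 0$ in the classical pointwise sense rather than merely $\mu_i(a)(w')^{(p-1)}(a) = 0$ (which is automatic since $\mu_i(a) = 0$). The exponent $p/(p-1)$ in the weighted norm is dictated by the scaling $|w(t) + 1| \lesssim (t-a)^{p/(p-1)}$ that follows from integrating the bound on $w'$, and matching this scaling in the iteration is what makes the contraction close.
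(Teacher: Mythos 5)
Your proposal is correct and follows essentially the same route as the paper's proof: at interior initial points you pass to the Pr\"ufer system (6.2)--(6.3), whose right-hand side is Lipschitz in $\phi$ when $T_i$ is regular at $a$, so Picard--Lindel\"of applies (this also handles the degeneracy of $(w')^{(p-1)}$ at $w'=0$), while at the singular endpoints your divergence-form integral identity and contraction in a weighted norm is exactly the fixed-point argument of Walter's Section~3 that the paper invokes. The one ingredient to state explicitly when $p>2$ is the lower bound $-\lambda\,\mu_i(s)^{-1}\int_a^s\mu_i\,w^{(p-1)}\,d\sigma\geq c\,(s-a)$ on a ball of functions near the constant $-1$, since it is this positivity (rather than the H\"older continuity of $u\mapsto u^{(p-1)}$, which is locally Lipschitz near $u=-1$) that keeps the outer power $X\mapsto X^{1/(p-1)}$ effectively Lipschitz and makes your iteration contract.
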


\begin{proof}[Proof of Proposition 6.1]
In the case $a\in I_i$ for $i=0,1,2,3$ we obtain the existence and uniqueness result from the fact that $T_i$ is a Lipschitz continuous function starting with $a$. Hence we need to confirm the boundary cases only. When (1) $\kappa>0$ and $a=-\pi/(2\sqrt{\kappa}t)$ and (2) $\kappa<0$ and $a = 0$ for model $T_1$, we can use fixed-point theorem argument to prove the existence and uniqueness of the solution by slightly modifying the proof in \cite{Wa98}, section 3.
\end{proof}

Then we shall look at the case $\kappa>0$. In order to find $a$ such that $w_{0,a}$ matches the maximum of $u$, we use the continuous dependence of $m(0,a)$ on $a$. We need to show that
\begin{proposition}
\label{Kappa Positive Odd Solution}
Fix $\alpha>0$, $n\geq 1$ and $\kappa>0$. Then there always exists a unique $\bar{a}\in I_0$ such that the solution $w_{3,-\bar{a}}$ is odd, and in particular, the maximum of $w$ restricted to $[-\bar{a},\bar{a}]$ is $1$.
\end{proposition}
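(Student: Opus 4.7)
The proof is by the intermediate value theorem applied at $t=0$, leveraging the reflection symmetry of equation (6.1). (I interpret the subscript on $w_{3,-\bar a}$ in the statement as a typo for $w_{0,-\bar a}$, since in the $\kappa>0$ case the relevant model is $T_0$.)

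First, since $T_0(t)=(n-1)\sqrt{\kappa}\tan(\sqrt{\kappa}t)$ is odd in $t$, equation (6.1) with $T=T_0$ is invariant under the reflection $(t,w)\mapsto(-t,-w)$: if $w$ solves it, so does $\tilde w(t):=-w(-t)$. Hence the solution $w_{0,-\bar a}$ is odd on $[-\bar a,\bar a]$ if and only if $w_{0,-\bar a}(0)=0$. The forward direction is trivial; for the reverse, if $w_{0,-\bar a}(0)=0$ then $\tilde w$ and $w_{0,-\bar a}$ share initial data at $t=0$ with nonzero derivative there (because $w$ is strictly monotone on the first half-oscillation $[-\bar a,b(0,-\bar a)]$, whose interior contains $0$), so local uniqueness of the IVP forces $\tilde w\equiv w_{0,-\bar a}$.

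Second, define $G:[-\pi/(2\sqrt{\kappa}),0]\to\mathbb{R}$ by $G(a)=w_{0,a}(0)$. By Proposition 6.1 together with standard continuous dependence on $a$, $G$ is continuous, and $G(0)=-1$. Pass to the Pr\"ufer representation (6.2)--(6.3) with $\alpha w_{0,a}=e\sin_p(\phi_a)$ and $w'_{0,a}=e\cos_p(\phi_a)$, where $\phi_a(a)=-\pi_p/2$ and $e(t)>0$; then $G(a)$ has the sign of $\sin_p(\phi_a(0))$. To apply IVT I need $G(a_*)>0$ for some $a_*$ close to $-\pi/(2\sqrt{\kappa})$. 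This follows from a Pr\"ufer analysis showing that the baseline drift $\phi_a'\approx\alpha$, plus the correction coming from $T_0$, carries $\phi_a(0)$ into $(0,\pi_p/2)$ once the interval $[a_*,0]$ is long enough.

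Third, uniqueness of $\bar a$ follows from strict monotonicity of $a\mapsto\phi_a(0)$. Differentiating the identity $\phi_a(a)=-\pi_p/2$ in $a$ gives $(\partial_a\phi_a)(a)=-\phi_a'(a)=-\alpha<0$ (using $\cos_p(-\pi_p/2)=0$ to get $\phi_a'(a)=\alpha$). Then $\partial_a\phi_a$ evolves along $\phi_a$ by the linearised ODE $\tfrac{d}{dt}(\partial_a\phi_a)=f_\phi(t,\phi_a)\,\partial_a\phi_a$, where $f$ is the right-hand side of (6.2), so $(\partial_a\phi_a)(t)<0$ for all $t\ge a$. Hence $\phi_a(0)$ is strictly decreasing in $a$, and the equation $\phi_a(0)=0$ has at most one solution. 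Combining this with the IVT existence gives a unique $\bar a$ with $G(-\bar a)=0$; by the symmetry step $w_{0,-\bar a}$ is odd, and its maximum on $[-\bar a,\bar a]$ is $w_{0,-\bar a}(\bar a)=1$.

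The main technical obstacle is the sign-change of $G$ near the endpoint $a=-\pi/(2\sqrt{\kappa})$, where $T_0$ diverges. Proposition 6.1 furnishes existence of the solution at the singular endpoint, but a careful Pr\"ufer-angle estimate is still required to confirm that the correction term $-T_0(t)\cos_p^{p-1}(\phi)\sin_p(\phi)/(p-1)$ does not prevent $\phi_a$ from crossing $0$ before $t=0$.
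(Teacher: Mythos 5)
There is a genuine gap, and you have in fact flagged it yourself: the sign change of $G(a)=w_{0,a}(0)$ near $a=-\pi/(2\sqrt{\kappa})$ is precisely the non-trivial content of the proposition, and your sketch ("baseline drift $\alpha$ plus the $T_0$-correction carries $\phi_a(0)$ past $0$ once the interval is long enough") cannot be completed as stated, because the interval can never be longer than $\pi/(2\sqrt{\kappa})$ and for small $\alpha$ the Pr\"ufer angle simply does not advance from $-\pi_p/2$ to $0$ within $I_0$. Concretely, for $p=2$ the model is $(\cos^{n-1}(\sqrt{\kappa}t)\,w')'+\lambda\cos^{n-1}(\sqrt{\kappa}t)\,w=0$ with first nonzero Neumann eigenvalue $\lambda_0=n\kappa$ (eigenfunction $\sin(\sqrt{\kappa}t)$); if $\lambda\le\lambda_0$ the solution vanishing at $0$ has $w'>0$ on all of $I_0$ (this is exactly Lemma 6.7 of the paper), so no interior $\bar a$ with an odd Neumann solution exists and $G$ never changes sign. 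Thus the statement is not a purely ODE fact valid for every $\alpha>0$: the indispensable input is the weak eigenvalue bound $\lambda>\lambda_0$ of Lemma 6.8, which the paper obtains from the gradient comparison theorem together with the diameter hypothesis $D\le\pi/\sqrt{\kappa}$, and which your argument never invokes or replaces by any estimate.

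For comparison, the paper's proof shoots from the center rather than from the singular endpoint: it considers the Pr\"ufer angle with $\phi(0)=0$ (i.e.\ $w(0)=0$, $w'(0)>0$) and uses $\lambda>\lambda_0$ to conclude $\phi(\hat a)=\pi_p/2$ for some $\hat a<\pi/(2\sqrt{\kappa})$, then extends $w$ oddly to $[-\hat a,\hat a]$, which avoids both the IVT at the singular endpoint and the continuity-in-$a$ issues you would otherwise need. Your symmetry reduction (odd $\Leftrightarrow$ $w(0)=0$, via uniqueness of the IVP at $t=0$) is correct, and your uniqueness argument via the variational equation for $\partial_a\phi_a$ is sound for the relevant range $\phi_a(0)\in(-\pi_p/2,\pi_p/2]$ (indeed it is more explicit than anything in the paper, which does not address uniqueness); but to repair the existence step you must import $\lambda>\lambda_0$ and argue, as in Lemma 6.7, that $G(a)\le 0$ for all $a$ would produce a Neumann eigenfunction on a subinterval (or on all of $I_0$) contradicting the monotonicity of the first eigenvalue, or equivalently adopt the paper's forward-shooting argument.
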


The proof of Proposition \ref{Kappa Positive Odd Solution} requires certain weaker estimate of $\lambda$. We define the first Neumann eigenvalue of the equation (\ref{Model Equation}) on $I_0$ to be
$$\lambda_0 := \inf\bigg\{\frac{\int_{I_0}\cos^{n-1}(t)|w'|^p dt}{\int_{I_0}\cos^{n-1}(t)|w|^p dt},\quad w\in W^{1,p}(I_0)\setminus\{0\}\bigg\}$$
First we claim that 
\begin{lemma}
\label{smaller means odd positive}
If $\lambda\leq\lambda_0$, $\kappa>0$, then equation (\ref{Model Equation}) admits a odd solution $w$ such that $w'(t)>0$ for all $t\in I_0$.
\end{lemma}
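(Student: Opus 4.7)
My plan is to construct the odd solution directly from symmetric initial data at $t=0$ and then rule out an interior critical point via a Rayleigh-quotient comparison with $\lambda_0$.

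Since $T_0(t)=(n-1)\sqrt{\kappa}\tan(\sqrt{\kappa}t)$ is odd, the map $w(t)\mapsto -w(-t)$ preserves the ODE in (\ref{Model Equation}). Standard ODE theory (valid since $T_0$ is smooth near $0$) yields a unique local solution $w_\lambda$ with $w(0)=0$ and $w'(0)=1$, which must be odd by uniqueness. Setting $t^*:=\inf\{t>0:w_\lambda'(t)=0\}$, it suffices to show $t^*\geq \pi/(2\sqrt{\kappa})$ whenever $\lambda\leq\lambda_0$: once this is established, $w'_\lambda>0$ throughout $I_0$, and after rescaling so that $w_\lambda(-\pi/(2\sqrt{\kappa}))=-1$, Proposition \ref{Existence and Uniqueness} identifies $w_\lambda$ with the desired odd solution of (\ref{Model Equation}).

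Suppose for contradiction that $t^*<\pi/(2\sqrt{\kappa})$, and set $M:=w_\lambda(t^*)>0$. Consider the odd extension $\tilde w\in W^{1,p}(I_0)$ defined by $\tilde w=w_\lambda$ on $[-t^*,t^*]$ and $\tilde w=\pm M$ on $(\pm t^*,\pm \pi/(2\sqrt{\kappa}))$. Multiplying the ODE by $w_\lambda$ and integrating by parts on $[-t^*,t^*]$ (using $w'_\lambda(\pm t^*)=0$) gives
$$\int_{-t^*}^{t^*}\cos^{n-1}(\sqrt{\kappa}t)|w_\lambda'|^p\,dt=\lambda\int_{-t^*}^{t^*}\cos^{n-1}(\sqrt{\kappa}t)|w_\lambda|^p\,dt.$$
Since $\tilde w'\equiv 0$ on $I_0\setminus[-t^*,t^*]$ while $|\tilde w|=M>0$ there, the Rayleigh quotient $R(\tilde w)$ is strictly less than $\lambda$; and since $\tilde w$ is odd (hence orthogonal to constants against the weight $\cos^{n-1}(\sqrt{\kappa}t)$) it is admissible for the minimization defining $\lambda_0$. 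Thus $\lambda_0\leq R(\tilde w)<\lambda$, contradicting $\lambda\leq\lambda_0$.

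The main obstacle I anticipate is the variational set-up: the displayed infimum defining $\lambda_0$ must be interpreted over functions orthogonal to constants under the weight (otherwise constants force the infimum to zero), and once this is done the oddness of $\tilde w$ guarantees admissibility. A further subtlety is the behaviour at the degenerate endpoints $\pm\pi/(2\sqrt{\kappa})$, where the vanishing of $\cos^{n-1}$ is precisely what makes $\tilde w$ (and its trivial weak derivative on the constant pieces) still lie in the weighted Sobolev space despite the cusp of $\tilde w$ at $\pm t^*$.
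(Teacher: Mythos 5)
Your proof is correct and follows essentially the same route as the paper: both construct the odd solution from the symmetric data $w(0)=0$, $w'(0)>0$, and both derive the contradiction with $\lambda\leq\lambda_0$ from a hypothetical interior critical point. The only differences are that where the paper simply invokes monotonicity of the first eigenvalue, you prove the needed strict inequality $\lambda_0<\lambda$ by hand via the extension-by-constants test function (correctly noting that the displayed definition of $\lambda_0$ must carry the weighted zero-mean constraint to be non-vacuous, with oddness of $\tilde w$ against the even weight giving admissibility), while your final rescaling to meet $w(a)=-1$ tacitly assumes $w_\lambda$ is bounded near $\pi/(2\sqrt{\kappa})$ --- a looseness the paper's own proof shares, since the unbounded case is only addressed in the following lemma.
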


\begin{proof}
Consider the initial value problem starting with $0$:
\begin{align}
\begin{cases}
\frac{d}{dt}(w')^{(p-1)}-T_0(t)\cdot(w')^{(p-1)}+\lambda w^{(p-1)}=0\\
w(0)=0, w'(0)>0
\end{cases}
\end{align}
This problem admits a solution $w$ up to $\pi/(2\sqrt{\kappa})$, the singularity of $T_0$, which can be extended to an odd solution on $I_0$. Now we claim that $w'(t)>0$ for all $t\in I_0$. Suppose for some $t_0\in (0, \pi/(2\sqrt{\kappa})$, $w'(t_0)=0$. This is an eigenfunction corresponding to $\lambda$, hence by the monotonicity of first eigenvalue, we get $\lambda > \lambda_0$, which contradicts with our assumption that $\lambda\leq \lambda_0$. Therefore, we have $w'(t)>0$ on $I_0$.
\end{proof}

From Lemma \ref{smaller means odd positive}, we can get a weaker bound on $\lambda$:

\begin{lemma}
When $\kappa>0$, we have $\lambda > \lambda_0$.
\end{lemma}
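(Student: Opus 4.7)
The plan is to argue by contradiction: assume $\lambda \leq \lambda_0$ and combine Lemma \ref{smaller means odd positive} with the Gradient Comparison Theorem \ref{Gradient Comparison} to force $D > \pi/\sqrt{\kappa}$, contradicting the standing hypothesis of Theorem \ref{Main Theorem}. The guiding intuition is that a monotone odd solution of the model ODE on \emph{all} of $I_0$, combined with the $1$-Lipschitz reparametrization provided by the gradient comparison, should pull the extremal points of the eigenfunction so far apart that they violate the diameter bound.

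Under the assumption $\lambda \leq \lambda_0$, Lemma \ref{smaller means odd positive} provides an odd solution $\tilde w$ of the model ODE with $T = T_0$, defined on $I_0$, with $\tilde w'(t)>0$ throughout $I_0$, $\tilde w(0)=0$, and $\tilde w'(0)>0$. Because the model ODE is $(p-1)$-homogeneous in $w$, every positive rescaling $\tilde w_\eps := \eps \tilde w$ is again a monotone solution for the same $\lambda$. A Frobenius-type analysis at the regular singular endpoints $t = \pm \pi/(2\sqrt{\kappa})$ shows that if $\tilde w$ were bounded there then $\tilde w'(\pm \pi/(2\sqrt{\kappa})) = 0$ automatically, making $\tilde w$ the first Neumann eigenfunction on $I_0$ and forcing $\lambda=\lambda_0$; hence in the regime $\lambda<\lambda_0$ the solution $\tilde w$ must blow up at both endpoints of $I_0$.

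Given this, for each small $\eps > 0$ I choose $a_\eps, b_\eps \in I_0$ with $\tilde w_\eps(a_\eps) = \min u = -1$ and $\tilde w_\eps(b_\eps) = \max u$, then apply Theorem \ref{Gradient Comparison} on the compact subinterval $[a_\eps, b_\eps]$ to conclude $\Gamma(\tilde w_\eps^{-1} \circ u) \leq 1$ on $M$. Evaluating at extremal points $x_\pm \in M$ where $u$ achieves its min and max yields $D \geq d(x_-, x_+) \geq b_\eps - a_\eps$. As $\eps \to 0^+$ the blow-up of $\tilde w$ at $\pm \pi/(2\sqrt{\kappa})$ forces $a_\eps \to -\pi/(2\sqrt{\kappa})$ and $b_\eps \to \pi/(2\sqrt{\kappa})$, so $b_\eps - a_\eps \to \pi/\sqrt{\kappa}$, and $D \geq \pi/\sqrt{\kappa}$. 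To upgrade this to the strict contradiction $D > \pi/\sqrt{\kappa}$, I would invoke the $\tilde\kappa$-perturbation trick already used in the proof of Theorem \ref{Gradient Comparison}: since $BE(\kappa,N)$ implies $BE(\tilde\kappa,N)$ for any $\tilde\kappa < \kappa$, the same argument applied to the strictly larger model interval $I_{\tilde\kappa}$ gives $D \geq \pi/\sqrt{\tilde\kappa} > \pi/\sqrt{\kappa}$, contradicting the standing hypothesis $D \leq \pi/\sqrt{\kappa}$ of Theorem \ref{Main Theorem}.

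The principal obstacle is justifying the use of Theorem \ref{Gradient Comparison} with the comparison function $\tilde w_\eps$: Theorem \ref{Gradient Comparison} is stated only for model solutions satisfying $w(a)=-1$ and $w'(a)=0$, whereas $\tilde w_\eps$ has $\tilde w_\eps'(a_\eps) > 0$. I expect the Bochner-based proof to carry over essentially unchanged since the endpoint Neumann condition on $w$ is used only to match the boundary normalization of $u$, not in the second-derivative test for the maximum of $Z_c$; nevertheless, verifying this requires a careful re-reading of the interior and boundary arguments in the proof of Theorem \ref{Gradient Comparison}, especially the convexity-based computation on $\partial M$. A secondary subtlety is the threshold case $\lambda = \lambda_0$ itself, in which $\tilde w$ is bounded and no blow-up is available; this case has to be absorbed into the $\tilde\kappa$-perturbation at the end, since then $\lambda > \lambda_{0,\tilde\kappa}$ only strictly and a short continuity argument is needed to conclude.
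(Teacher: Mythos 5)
Your skeleton (argue by contradiction, invoke Lemma \ref{smaller means odd positive}, use the gradient comparison to force $D\ge\pi/\sqrt{\kappa}$) is the paper's, and your $\varepsilon$-rescaling with matching points $a_\varepsilon,b_\varepsilon$ is essentially how the paper treats the case of unbounded $w$ (it chooses $c_k\searrow 0$, $t_k\nearrow\pi/(2\sqrt{\kappa})$ with $c_kw(t_k)=\max u$ and gets $D\ge 2t_k\to\pi/\sqrt{\kappa}$); the ``off-label'' use of Theorem \ref{Gradient Comparison} for a strictly increasing rescaled solution is likewise shared with the paper's own argument, and since $\tilde w_\varepsilon'>0$ on all of $[a_\varepsilon,b_\varepsilon]$ the touching-point analysis is, if anything, cleaner there, so that is not where your proposal fails. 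The genuine gap is in how you dispose of the cases your blow-up mechanism does not reach. The $\tilde\kappa$-perturbation you invoke both for strictness and for the threshold case $\lambda=\lambda_0$ cannot be run: $BE(\kappa,N)$ does imply $BE(\tilde\kappa,N)$ for $\tilde\kappa<\kappa$, but to start the argument for the $\tilde\kappa$-model you must first apply Lemma \ref{smaller means odd positive} on the longer interval $I_{0,\tilde\kappa}$, and that requires $\lambda\le\lambda_{0,\tilde\kappa}$. The first Neumann eigenvalue of the longer, flatter model is smaller ($\lambda_0(\kappa)=\kappa^{p/2}\lambda_0(1)$), so $\lambda\le\lambda_0$ gives no control over $\lambda_{0,\tilde\kappa}$; in the threshold case you even concede $\lambda>\lambda_{0,\tilde\kappa}$, at which point no monotone model solution, hence no diameter estimate and no contradiction, is produced. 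The promised ``short continuity argument'' is exactly the missing content.

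Your reduction of the bounded case to $\lambda=\lambda_0$ is also not justified as stated: writing the ODE as $(\rho\,(w')^{(p-1)})'=-\lambda\rho\,w^{(p-1)}$ with $\rho=\cos^{n-1}(\sqrt{\kappa}t)$, one sees $\rho\,(w')^{(p-1)}\to\ell$ at the endpoint; if $\ell\neq 0$ then $w'\sim c\,(\pi/(2\sqrt{\kappa})-t)^{-(n-1)/(p-1)}$, which contradicts boundedness of $w$ only when $n\ge p$, so for $n<p$ boundedness does not force the Neumann condition, and even when it does, identifying an odd increasing eigenfunction with the \emph{first} nonzero eigenvalue needs a Sturm-type argument you have not supplied. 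The paper sidesteps both issues: it splits only into ``$w$ bounded'' and ``$w$ unbounded'', and in the bounded case it rescales $w$ so that $\max w=w(\pi/(2\sqrt{\kappa}))=\max u$, picks $x,y$ with $u(x)=-\max u$ and $u(y)=\max u$, and uses oddness to conclude $D\ge w^{-1}(u(y))-w^{-1}(u(x))=\pi/\sqrt{\kappa}$ directly, which covers $\lambda=\lambda_0$ with no perturbation. To repair your proof, replace the Frobenius dichotomy and the $\tilde\kappa$ step by this direct treatment of the bounded case (and note that the strict inequality you were chasing is not obtained by the paper either; it treats $D\ge\pi/\sqrt{\kappa}$ itself as the contradiction).
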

\begin{proof}
Suppose that on the contrary, $\lambda\leq \lambda_0$, then by Lemma \ref{smaller means odd positive}, we get an odd function $w$ such that $w'>0$ on $I_0$. Suppose the first eigenfuntion $u$ is scaled so that $-1 = \min u\leq \max u\leq 1$.
When $w$ is bounded, we can scale $w$ so that $\max w = w(\pi/(2\sqrt{\kappa}))=\max u$. Pick $x,y\in M$ such that $u(x) = -\max u$ and $u(y) = \max u$. Then by the gradient comparison theorem,
$$D \geq \text{dist}_M (x,y) \geq w^{-1}(u(y)) - w^{-1}(u(x)) = \frac{\pi}{\sqrt{\kappa}}$$
which is a contradiction. When $w$ is unbounded, we can choose $c_k\searrow 0$ and $t_k \nearrow \pi/(2\sqrt{\kappa})$ such that 
$c_k w(t_k) = \max u$. Again using the gradient comparison theorem, we can prove $D \geq 2t_k\to \pi/\sqrt{\kappa}$, which again gives a contradiction. Hence $\lambda > \lambda_0$.
\end{proof}

\begin{proof}[Proof of Proposition \ref{Kappa Positive Odd Solution}]
Using the Pr\"{u}fer Transformation, we consider the following initial value problem:
\begin{align}
\label{phi initial 0}
\begin{cases}
\phi'(t)=\alpha-\frac{T_0(t)}{p-1}\cos_p^{p-1}(\phi)\sin_p(\phi)\\
\phi(0)=0
\end{cases}
\end{align}
Since $\lambda > \lambda_0$, equation \ref{phi initial 0} has a solution $\phi$ such that $\phi(\hat{a}) = \pi_p/2$ for some $\hat{a}<\pi/(2\sqrt{\kappa})$. This implies that $e$ achieves maximum at $\hat{a}$ from the equation satisfied by $e$.  Therefore we conclude that $w = e\sin\phi/\alpha$ achieves maximum at $\hat{a}$, and $w$ can be extended to an odd function on $[-\hat{a},\hat{a}]$ such that $w'(-\hat{a}) = 0$, i.e. $w$ is a solution to the model equation (\ref{Model Equation}).
\end{proof}

By proposition \ref{Kappa Positive Odd Solution}, we know that $m(0,-\hat{a}) = 1$. Now we show the continuous monotonicity of $m(0,a)$, and first we need the following lemma to confirm continuity at the left endpoint:

\begin{lemma}
\label{Continuity of Maximum}
$$\lim_{a\to -\pi/(2\sqrt{\kappa})}m(0,a) = m(0,-\pi/(2\sqrt{\kappa})).$$
\end{lemma}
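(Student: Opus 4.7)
The plan is to apply the Pr\"{u}fer transformation and reduce the lemma to continuous dependence of the Pr\"{u}fer system on $a$ up to and including $a_0 := -\pi/(2\sqrt{\kappa})$, where the coefficient $T_0$ becomes singular. Let $(\phi_a, e_a)$ be the Pr\"{u}fer variables of $w_{0,a}$, with initial data $\phi_a(a) = -\pi_p/2$ and $e_a(a) = \alpha$. Then $b(0,a)$ is the first $t > a$ at which $\phi_a(t) = \pi_p/2$, and $m(0,a) = e_a(b(0,a))/\alpha$. Time-translating by $s = t - a$, set $\tilde\phi_a(s) = \phi_a(s+a)$ and $\tilde e_a(s) = e_a(s+a)$; these solve the Pr\"{u}fer system with coefficient $T_0(s+a)$, smooth in $s$ for each fixed $a > a_0$ but blowing up like $-(n-1)/s$ at $s = 0$ in the limit $a = a_0$.

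Away from the singular layer $s = 0$, continuous dependence of $(\tilde\phi_a, \tilde e_a)$ on $a$ is standard. The substantive step is a uniform control on $s \in [0, \epsilon]$. The mechanism is that the singular coefficient $T_0(s+a)$ enters the vector field only through $\cos_p^{p-1}(\tilde\phi_a)\sin_p(\tilde\phi_a)$ and $\cos_p^{p}(\tilde\phi_a)$, both of which vanish at $\tilde\phi_a = -\pi_p/2$. Using the expansions $\cos_p(\phi) \sim C(\phi+\pi_p/2)^{1/(p-1)}$ and $\sin_p(\phi) + 1 \sim C'(\phi+\pi_p/2)^{p/(p-1)}$ near $\phi = -\pi_p/2$, together with $T_0(s+a) \sim -(n-1)/(s+a-a_0)$ near the singularity, the $\tilde\phi_a$ equation linearized near $s=0$ reduces to the Euler-type model $y\xi' + K\xi = \alpha y$, where $y = s + a - a_0$, $\xi = \tilde\phi_a + \pi_p/2$, and $K = (n-1)C^{p-1}/(p-1)$. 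Its explicit solution $\xi(y) = (\alpha y/(K+1))(1 - (\xi_0/y)^{K+1})$ with $\xi_0 = a-a_0$ is bounded above by $\alpha y/(K+1)$ uniformly in $a$, yielding the desired uniform barrier $\tilde\phi_a(s)+\pi_p/2 \leq C s$ on $[0,\epsilon]$, up to a negligible error from the neglected nonlinear terms that is controlled by a Gronwall-type argument.

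Plugging this barrier into the $\tilde e_a$ equation gives a uniform modulus of continuity for $\log \tilde e_a$ at $s = 0$ independent of $a$ near $a_0$. An Arzel\`{a}--Ascoli argument combined with the uniqueness statement in Proposition \ref{Existence and Uniqueness} then shows $(\tilde\phi_a, \tilde e_a) \to (\tilde\phi_{a_0}, \tilde e_{a_0})$ uniformly on every $[0, S]$ as $a \to a_0^+$. Since $\tilde\phi_a' = \alpha > 0$ whenever $\tilde\phi_a = \pi_p/2$, the first hitting time $s_a := b(0,a) - a$ depends continuously on $a$ by the implicit function theorem, up to $a = a_0$. Combining with the uniform convergence of $\tilde e_a$ yields $m(0,a) = \tilde e_a(s_a)/\alpha \to \tilde e_{a_0}(s_{a_0})/\alpha = m(0, a_0)$. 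The main obstacle is the construction of the uniform barrier in the initial layer; once it is in place, the rest is standard continuous dependence plus transversality.
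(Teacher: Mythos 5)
Your argument is correct in outline, but it follows a genuinely different route from the paper. The paper (adapting Proposition 1 of Bakry--Qian) stays at the level of $w$ itself: it rewrites the model ODE in divergence form with weight $\rho=\cos^{n-1}(\sqrt{\kappa}\,t)$, forms the difference of (powers of) the solution started at $a$ and at the endpoint, integrates twice to obtain Volterra-type identities whose kernel $\int_t^x\rho(t)/\rho(y)\,dy$ is bounded by $C(n,T)$, and concludes $w_a\to w$ and $w_a'\to w'$ locally uniformly as $a\to-\pi/(2\sqrt{\kappa})$, whence the maxima converge. You instead work entirely in the Pr\"ufer variables: a uniform barrier on $\phi_a+\pi_p/2$ in the singular initial layer, uniform control of $\log e_a$, Arzel\`a--Ascoli plus the uniqueness statement of Proposition \ref{Existence and Uniqueness} to identify the limit, and a transversality argument ($\phi'=\alpha$ at $\phi=\pi_p/2$) to pass from uniform convergence to convergence of the first critical point and hence of $m(0,a)=e_a(b(0,a))/\alpha$. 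What each buys: the paper's computation is short and quantitative, but its superposition step (the difference satisfying the same equation) is literally valid only for the linear case $p=2$, and for the half-linear equation it requires justification; your route never subtracts solutions, so it is robust for all $p>1$, at the price of the layer analysis and a softer compactness-plus-uniqueness identification. Two small streamlinings: the barrier does not need the Euler-model linearization or the ``Gronwall-type'' error control you leave implicit --- since $T_0\le 0$ to the left of $0$, $\cos_p\ge 0$ and $\sin_p\le 0$ while $\phi\in[-\pi_p/2,0]$, the correction term in the $\phi$-equation is nonpositive, so $\phi'\le\alpha$ and $\phi_a(t)+\pi_p/2\le\alpha(t-a)$ directly, which already makes $T_0\cos_p^{p-1}\sin_p$ bounded and $T_0\cos_p^{p}$ small in the layer; and ``implicit function theorem'' is not quite the right invocation for the hitting-time step --- what you use is that uniform convergence plus the strict transversal crossing forces $b(0,a)\to b(0,-\pi/(2\sqrt{\kappa}))$. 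Like the paper, you implicitly assume the first critical point of the limiting solution lies strictly inside $I_0$; this is needed to keep $T_0$ bounded near the hitting time and should be flagged (it follows from $\lambda>\lambda_0$).
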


\begin{proof}[Proof of Lemma \ref{Continuity of Maximum}]
The idea of proof is from Proposition 1 of \cite{BQ00}. We will show that for any $T<\pi/(2\sqrt{\kappa})$ and $x\in(-\pi/(2\sqrt{\kappa}), T]$, we have
$$\lim_{a\to -\pi/(2\sqrt{\kappa})}w_a(x) = w_{-\pi/(2\sqrt{\kappa})}(x)\quad\text{and}\quad\lim_{a\to -\pi/(2\sqrt{\kappa})}w_a'(x) = w'_{-\pi/(2\sqrt{\kappa})}(x)$$
We denote $w_{-\pi/(2\sqrt{\kappa})}(x)$ by $w(x)$. We consider the function $W_a = w_a^{(p)} - w^{(p)}$, and know that the model equation \ref{Model Equation} can be written as
$$(\rho [w^{(p)}]')' + \lambda \rho w^{(p)} = 0$$
where $\rho(x) = \cos^{n-1}(\sqrt{\kappa}x)$. Hence we have
$$(\rho W_a')' + \lambda \rho W_a = 0\qquad\text{ on }[a,T)$$
Integrating the above equation over $[a,x]$ we get
$$\rho(x) W_a'(x)- \rho(a) W_a’(a) = -\lambda\int_a^x \rho(t) W_a(t)dt$$
Since $W_a'(a) = -(w^{(p)})'(a)$, we get
\begin{align}
\label{W prime}
W_a'(x) = -(w^{(p)})'(a)\frac{\rho(a)}{\rho(x)} -\lambda\int_a^x \frac{\rho(t)}{\rho(x)} W_a(t)dt
\end{align}
By another integration over $[a,x]$, we have
\begin{align}
\label{W}
    W_a(x) = W_a(a) -[w^{(p)}]'(a)\int_a^x\frac{\rho(a)}{\rho(y)}dy -\lambda\int_a^x W_a(t)\int_t^x \frac{\rho(t)}{\rho(y)}dydt
\end{align}
We know that as $T$ is fixed, 
$$\bigg|\int_t^x \frac{\rho(t)}{\rho(y)}dy\bigg|< C(n,T)$$
Since $W_a(a)\to 0$ and $[w^{(p)}]'(a)\to 0$ as $a\to -\pi/(2\sqrt{\kappa})$, the we have $W_a(x)\to 0$ and $W_a'(x) \to 0$ as $a\to-\pi/(2\sqrt{\kappa})$ by equation (\ref{W prime}) and (\ref{W}).
\end{proof}

\begin{proposition}
$m(0,a)$ is an continuous monotonic function of $a$ on $[-\pi/(2\sqrt{\kappa}), -\hat{a}]$.
\end{proposition}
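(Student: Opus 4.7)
The plan is to recast the question in the Pr\"{u}fer coordinates $(\phi,e)$ of Section 6.1, so that the initial conditions $w(a)=-1$, $w'(a)=0$ translate to $\phi(a)=-\pi_p/2$ and $e(a)=\alpha$; the first critical point $b(0,a)$ becomes the first $t>a$ at which $\phi(t)=\pi_p/2$; and the maximum is $m(0,a)=e(b(0,a))/\alpha$. The proof then splits into a continuity part and a strict monotonicity part.

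\textbf{Continuity.} On every compact subinterval of $I_0$ the right-hand side of the Pr\"ufer system is Lipschitz in $(\phi,e)$, so the flow $(\phi(\cdot;a),e(\cdot;a))$ depends continuously on $a$ in the interior of $[-\pi/(2\sqrt{\kappa}),-\hat{a}]$. Since $\cos_p(\pi_p/2)=0$, the angular equation gives $\phi'(b(0,a))=\alpha>0$, so $\phi$ crosses the level $\pi_p/2$ transversally; the implicit function theorem then makes $b(0,a)$, hence $m(0,a)=e(b(0,a))/\alpha$, continuous in the interior. At the left endpoint, Lemma 6.5 provides $C^1$-convergence of $w_{0,a}$ to $w_{0,-\pi/(2\sqrt{\kappa})}$ on compact subintervals of $I_0$, and transversality again promotes this to continuity of $b(0,\cdot)$ and $m(0,\cdot)$ at $a=-\pi/(2\sqrt{\kappa})$.

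\textbf{Monotonicity.} Fix $-\pi/(2\sqrt{\kappa})\leq a_1<a_2\leq -\hat{a}$ and denote the associated Pr\"ufer data by $(\phi_i,e_i,b_i)$. After a preliminary step establishing that each $\phi_i$ is strictly increasing on $[a_i,b_i]$, we set $t_i:=\phi_i^{-1}:[-\pi_p/2,\pi_p/2]\to[a_i,b_i]$. Because $\phi_1'(a_1)=\alpha>0$ we have $\phi_1(a_2)>-\pi_p/2=\phi_2(a_2)$, so standard ODE comparison applied to the common equation
$$\phi'=\alpha-\frac{T_0(t)}{p-1}\cos_p^{p-1}(\phi)\sin_p(\phi)$$
gives $\phi_1(t)>\phi_2(t)$ on $[a_2,b_1]$; in particular $b_1<b_2$ and $t_1(\phi)<t_2(\phi)$ for every $\phi\in(-\pi_p/2,\pi_p/2)$. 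Changing variables in the amplitude equation $(\log e)'=\frac{T_0(t)}{p-1}\cos_p^p(\phi)$ yields
$$\log m(0,a_i)=\int_{-\pi_p/2}^{\pi_p/2} F(T_0(t_i(\phi)),\phi)\,d\phi,\qquad F(T,\phi):=\frac{T\cos_p^p(\phi)}{(p-1)\alpha-T\cos_p^{p-1}(\phi)\sin_p(\phi)}.$$
A direct differentiation gives
$$\frac{\partial F}{\partial T}=\frac{(p-1)\alpha\cos_p^p(\phi)}{\bigl[(p-1)\alpha-T\cos_p^{p-1}(\phi)\sin_p(\phi)\bigr]^2}\geq 0,$$
with equality only at $\phi=\pm\pi_p/2$. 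Since $T_0$ is strictly increasing on $I_0$ and $t_1(\phi)<t_2(\phi)$, we get $F(T_0(t_1(\phi)),\phi)<F(T_0(t_2(\phi)),\phi)$ on a set of full measure, so $\log m(0,a_1)<\log m(0,a_2)$, giving strict monotonicity.

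\textbf{Main obstacle.} The delicate preliminary step is strict monotonicity of the Pr\"ufer angle $\phi_a$ on $[a,b(0,a)]$, which is needed for the change of variables $t_i=\phi_i^{-1}$. The competing sign pattern in $\phi'=\alpha-\frac{T_0}{p-1}\cos_p^{p-1}(\phi)\sin_p(\phi)$ means the bound $\phi'>0$ is not immediate on the portion where the subtracted term is positive. The strategy is a second-derivative test at any putative interior zero $t^*$ of $\phi_a'$: using $T_0'>0$, one checks that on $\{T_0<0\}$ local extrema of $\phi_a$ can only be minima, and on $\{T_0>0\}$ only maxima, which, combined with the transversality $\phi'=\alpha>0$ at $\phi=\pm\pi_p/2$, rules out oscillation for the odd-solution parameter $a=-\hat{a}$; an open-closed argument in $a$ then extends the conclusion across the whole interval $[-\pi/(2\sqrt{\kappa}),-\hat{a}]$.
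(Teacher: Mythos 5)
Your proof is correct in substance, but it takes a genuinely different route from the paper. The paper does not compute anything: it proves injectivity of $a\mapsto m(0,a)$ by a self-comparison trick --- if $m(0,a)=m(0,a')$ then $w_{0,a}$ and $w_{0,a'}$ have the same range, so the gradient comparison theorem (applied with one model solution playing the role of the eigenfunction for the other, in both directions) forces $w_{0,a}'\circ w_{0,a}^{-1}=w_{0,a'}'\circ w_{0,a'}^{-1}$, hence the two solutions are translates of one another, which is impossible for $a\neq a'$ since $T_0$ is not translation invariant --- and then combines injectivity with the same continuity statement (interior continuous dependence plus Lemma 6.5 at the left endpoint) to conclude monotonicity, without ever identifying the direction. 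Your argument instead expresses $\log m(0,a)$ as the explicit integral $\int_{-\pi_p/2}^{\pi_p/2}F(T_0(t_a(\phi)),\phi)\,d\phi$ in Pr\"ufer coordinates and differentiates $F$ in $T$; this buys more (strictness and the fact that $m(0,\cdot)$ is \emph{increasing}, consistent with Corollary 7.5) and avoids invoking the gradient comparison theorem on the one-dimensional model, at the cost of the extra step you flag: strict monotonicity of the Pr\"ufer angle, which is needed both for the change of variables $t_a=\phi_a^{-1}$ and for the positivity of the denominator of $F$ along the orbit. That step is only sketched, but your sketch does close, and more simply than the ``open--closed in $a$'' continuation you propose: at any critical point $t^*$ of $\phi_a$ one has $\phi_a''(t^*)=-\alpha\,T_0'(t^*)/T_0(t^*)$, so with $T_0'>0$ all critical points are nondegenerate, minima occur only where $T_0<0$ and maxima only where $T_0>0$; a local maximum below the level $\pi_p/2$ would force $\phi_a$ to decrease on all of $(t^*,\pi/(2\sqrt\kappa))$ (any later critical point in $\{T_0>0\}$ would again have to be a maximum, impossible when approached from below), so $\phi_a$ would never reach $\pi_p/2$, contradicting the existence of $b(0,a)$, while a local minimum would require an earlier maximum at a point with $T_0>0$ preceding a point with $T_0<0$, which is impossible since $T_0$ is increasing. (Both your proof and the paper's implicitly use that $b(0,a)$ exists inside $I_0$, i.e.\ that $m(0,a)$ is well defined; given that, no induction in $a$ is needed.) With that step written out, your proof is complete and, if anything, more informative than the paper's.
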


\begin{proof}
First we show that $m(0,a)$ is an invertible function for $a\in [-\pi/(2\sqrt{\kappa}), -\hat{a}]$. Suppose there are $a$ and $a'$ such that $m(0,a) = m(0,a')$. Then since $w_{0,a}$ and $w_{0,a'}$ have same range and both are invertible functions on $[a,b(a)]$ and $[a',b(a')]$ respectively, by the gradient comparison theorem \ref{Gradient Comparison}, we have
$$w_{0,a}'\circ w_{0,a}^{-1} = w_{0,a'}'\circ w_{0,a'}^{-1}$$
and hence $w_{0,a}(x) = w_{0,a'}(x-a' + a)$, i.e. identical under a translation. However, by the $T_0$ model equation, this can only happen when $a=a'$. Therefore $m(0,a)$ is invertible, and it is monotonic.\\
To see the continuity of $m(0,a)$, note that when $a>-\pi/(2\sqrt{\kappa})$, the continuous dependence of the solution on the initial value problem is automatic. When $a = -\pi/(2\sqrt{\kappa})$, Lemma \ref{Continuity of Maximum} shows that $m(0,a)$ is continuous.
\end{proof}

Now let us turn to the case $\kappa<0$, which is more delicate. We will get a similar result as Proposition \ref{Kappa Positive Odd Solution}:

\begin{proposition}[\cite{NV14} Proposition 6.1]
\label{Kappa Negative Odd Solution}
Fix $\alpha>0$, $n\geq 1$ and $\kappa<0$. Then there always exists a unique $\bar{a}>0$ such that the solution $w_{3,-\bar{a}}$ is odd, and in particular, the maximum of $w$ restricted to $[-\bar{a},\bar{a}]$ is $1$.
\end{proposition}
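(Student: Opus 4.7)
The plan is to recast the odd-symmetry requirement for $w_{3,-\bar{a}}$ as a shooting problem for the Pr\"ufer angle $\phi_{\bar{a}}$ of Definition 6.2 and then solve it by the intermediate value theorem combined with strict monotonicity of the shooting map. For each $\bar{a}>0$, let $w_{3,-\bar{a}}$ solve the model equation (6.1) with $T=T_3$ and initial data at $t=-\bar{a}$, and let $\phi_{\bar{a}}$ be the associated Pr\"ufer angle with $\phi_{\bar{a}}(-\bar{a})=-\pi_p/2$. Since $T_3(-t)=-T_3(t)$, the ODE (6.1) is invariant under $(t,w)\mapsto(-t,-w)$, so by uniqueness at a point where $w(0)=0$ the solution extends to an odd function on $\mathbb{R}$; moreover odd symmetry forces $w_{3,-\bar{a}}(\bar{a})=-w_{3,-\bar{a}}(-\bar{a})=1$ and $w'_{3,-\bar{a}}(\bar{a})=0$, giving precisely the desired normalization and first critical point. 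Hence it suffices to find a unique $\bar{a}>0$ with $F(\bar{a}):=\phi_{\bar{a}}(0)=0$.

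For the boundary values of $F$, continuity of the flow immediately gives $F(\bar{a})\to -\pi_p/2$ as $\bar{a}\to 0^+$. For large $\bar{a}$, note that in the sector $\phi\in[-\pi_p/2,0]$ and $t\le 0$ we have $\sin_p(\phi)\le 0$, $\cos_p(\phi)\ge 0$, and $T_3(t)\ge 0$, whence
\begin{equation*}
\phi'_{\bar{a}}(t)=\alpha-\frac{T_3(t)}{p-1}\cos_p^{p-1}(\phi_{\bar{a}})\sin_p(\phi_{\bar{a}})\ge \alpha
\end{equation*}
on this sector. Moreover $\phi'_{\bar{a}}=\alpha>0$ whenever $\phi_{\bar{a}}=0$ (since $\sin_p(0)=0$), so $\phi_{\bar{a}}$ cannot cross $0$ downward; integrating the lower bound $\phi'_{\bar a}\ge\alpha$ on the region it is valid then forces $F(\bar{a})\ge 0$ as soon as $\bar{a}\ge \pi_p/(2\alpha)$. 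For strict monotonicity, encode the shooting via the flow $\Phi(t;s,\phi_0)$ of the Pr\"ufer equation, so $F(\bar{a})=\Phi(0;-\bar{a},-\pi_p/2)$. Differentiating the semigroup identity $\Phi(0;s,-\pi_p/2)=\Phi\bigl(0;s+h,\Phi(s+h;s,-\pi_p/2)\bigr)$ at $h=0$ yields
\begin{equation*}
\partial_s\Phi(0;s,-\pi_p/2)=-\alpha\,\partial_{\phi_0}\Phi(0;s,-\pi_p/2),
\end{equation*}
where the factor $\alpha$ is the value of the right-hand side of (6.2) at $\phi_0=-\pi_p/2$, available thanks to $\cos_p(-\pi_p/2)=0$. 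Since $\partial_{\phi_0}\Phi>0$ by the standard variational formula, $F'(\bar{a})=-\partial_s\Phi(0;-\bar{a},-\pi_p/2)>0$, and the intermediate value theorem produces a unique $\bar{a}\in(0,\pi_p/(2\alpha))$ with $F(\bar{a})=0$.

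The main obstacle is the low regularity of the right-hand side of the Pr\"ufer equation at the initial value $\phi_0=-\pi_p/2$: for $p\ne 2$ the factor $\cos_p^{p-1}$ is only H\"older continuous near $\phi=\pm\pi_p/2$, so the classical smooth dependence on initial data underlying the variational formula above cannot be invoked verbatim. This is exactly the technical difficulty addressed in \cite{NV14}, and here one can handle it by starting the shooting at $\phi_0=-\pi_p/2+\varepsilon$ (where the flow is smooth), proving strict monotonicity in $\bar{a}$ for the auxiliary shooting maps $F_\varepsilon$, and then passing to the limit $\varepsilon\to 0^+$ using one-sided continuity of the Pr\"ufer flow at $\phi=-\pi_p/2$; the transversality $\phi'=\alpha>0$ there keeps the shooting well-behaved in the limit. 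Existence does not suffer from this defect, as the lower bound $\phi'\ge\alpha$ only uses the qualitative sign information on $\sin_p$, $\cos_p$ and $T_3$.
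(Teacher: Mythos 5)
Your overall scheme --- recasting oddness as the shooting condition $\phi_{\bar a}(0)=0$ for the Pr\"ufer angle, getting the endpoint signs from $\phi'\ge\alpha$ on the sector $\phi\in[-\pi_p/2,0]$, $t\le 0$, and concluding by the intermediate value theorem plus strict monotonicity of $F$ --- is sound and is the same shooting strategy as the proof in \cite{NV14}, which this paper cites without reproducing. But two points need attention. First, your self-diagnosed ``main obstacle'' is not actually there: differentiating $|\sin_p|^p+|\cos_p|^p=1$ gives $\frac{d}{d\phi}\cos_p^{(p-1)}(\phi)=-(p-1)\sin_p^{(p-1)}(\phi)$, so $\cos_p^{(p-1)}(\phi)\sin_p(\phi)$ is $C^1$ with bounded derivative $|\cos_p|^p-(p-1)|\sin_p|^p$; near $\pm\pi_p/2$ the factor $\cos_p^{(p-1)}$ vanishes \emph{linearly} (it is $\cos_p$ itself that is only H\"older there when $p>2$, not its $(p-1)$-power). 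Hence the Pr\"ufer right-hand side is Lipschitz (indeed $C^1$) in $\phi$, the flow is $C^1$ in $(s,\phi_0)$, and your identity $F'(\bar a)=\alpha\,\partial_{\phi_0}\Phi>0$ is legitimate as written. This matters because the fallback you sketch would not rescue you if the obstacle were real: strict monotonicity of the regularized maps $F_\varepsilon$ survives the limit $\varepsilon\to0^+$ only as non-strict monotonicity, which does not give uniqueness. If you prefer to avoid differentiating the flow altogether, strict monotonicity follows from pure comparison: for $\bar a_2>\bar a_1$ one has $\phi_{\bar a_2}(-\bar a_1)>-\pi_p/2=\phi_{\bar a_1}(-\bar a_1)$, because $\phi'=\alpha>0$ whenever $\phi=-\pi_p/2$ forbids any return to that level, and then uniqueness of solutions prevents the two trajectories from crossing on $[-\bar a_1,0]$.

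Second, the reduction ``odd $\iff F(\bar a)=0$'' is only half proved, and it is the half the uniqueness claim needs. Oddness of $w_{3,-\bar a}$ is equivalent to $w(0)=0$, i.e.\ to $\phi_{\bar a}(0)\in\pi_p\mathbb{Z}$, not to $\phi_{\bar a}(0)=0$. Nothing in your argument excludes larger values of $\bar a$ with $F(\bar a)=k\pi_p$, $k\ge1$, and for $\alpha$ large (so that $\phi'$ is uniformly positive also on the sector $[0,\pi_p/2]$ for $t\le0$) such values do occur; they produce odd solutions with an interior critical point and, since $e$ is nondecreasing for $t\le 0$, with maximum strictly larger than $1$ on $[-\bar a,\bar a]$. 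So what you actually prove is existence and uniqueness of $\bar a$ such that $w_{3,-\bar a}$ is odd \emph{and increasing} on $[-\bar a,\bar a]$ (equivalently $b(3,-\bar a)=\bar a$, maximum exactly $1$), which is the statement the paper and \cite{NV14} use; you should say so explicitly. Relatedly, at your root $\bar a$ you should record the short verification that $\phi_{\bar a}$ stays in $[-\pi_p/2,0)$ on $[-\bar a,0)$ (it cannot cross $0$ downward since $\phi'=\alpha$ there), which is what gives $w'>0$ on $(-\bar a,\bar a)$ and hence that the maximum on $[-\bar a,\bar a]$ is exactly $1$, rather than merely $w(\bar a)=1$.
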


By studying the equation of $\phi$ one can show that there is a critical value $\bar{\alpha}$ at which the oscillatory behavior of $w$ changes. For the modeal $T_3$, we have
\begin{lemma}[\cite{NV14} Proposition 6.4]
There exists a limiting value $\bar{\alpha}>0$ such that for $\alpha>\bar{\alpha}$ we have $\delta(3,a)<\infty$ for every $a\in\mathbb{R}$. For $\alpha<\bar{\alpha}$, we have $$\lim_{t\to\infty}\phi_{3,a}(t)<\infty\qquad\text{ for all }a\in\mathbb{R}.$$
for sufficiently large $a$ we have 
$$-\frac{\pi_p}{2}<\lim_{t\to\infty}\phi_{3,a}(t)<0\qquad\text{ and }\delta(3,a)=\infty.$$
When $\alpha=\bar{\alpha}$, we have $\lim_{a\to\infty}\delta(3,a)=\infty.$
\end{lemma}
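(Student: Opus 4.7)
\textbf{Setup and the critical value $\bar\alpha$.}
Write $C:=\frac{(n-1)\sqrt{-\kappa}}{p-1}$ and $g(\phi):=\cos_p^{p-1}(\phi)\sin_p(\phi)$. Using the parities ($\sin_p$ odd, $\cos_p$ even) and the identities $\sin_p(\phi+\pi_p)=-\sin_p(\phi)$, $\cos_p(\phi+\pi_p)=-\cos_p(\phi)$, one checks that $g$ is $\pi_p$-periodic and odd, and that $M:=\max_\phi|g(\phi)|$ is attained at some $\phi^*\in(-\pi_p/2,0)$ with $g(\phi^*)=-M$. Equation (6.2) becomes
\[
\phi'(t)=\alpha-\frac{T_3(t)}{p-1}g(\phi(t)),\qquad \phi(a)=-\tfrac{\pi_p}{2},
\]
with $|T_3(t)/(p-1)|\le C$ for every $t$ and $T_3(t)\to -(p-1)C$ as $t\to+\infty$. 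I set $\bar\alpha:=CM>0$. For $\alpha>\bar\alpha$ the bound $\phi'(t)\ge\alpha-CM=\alpha-\bar\alpha>0$ is uniform in $(t,\phi)$, so $\phi$ grows at least linearly and reaches $\pi_p/2$ by time $a+\pi_p/(\alpha-\bar\alpha)$; thus $\delta(3,a)<\infty$ for every $a\in\mathbb R$.

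\textbf{Case $\alpha<\bar\alpha$.}
Consider the autonomous limit $\phi'_\infty(\phi):=\alpha+Cg(\phi)$; its zeros on $(-\pi_p/2,0)$ are the two solutions $\phi_1<\phi_2$ of $|g(\phi)|=\alpha/C<M$, and by $\pi_p$-periodicity the full zero set is $\{\phi_j+k\pi_p\}_{j=1,2,\,k\in\mathbb Z}$, with $\phi_1+k\pi_p$ stable and $\phi_2+k\pi_p$ unstable under $\phi'_\infty$. Fix $\eta>0$ small so that $\phi'_\infty(\phi_2+k\pi_p-\eta)\le-c(\eta)<0$ uniformly in $k$. Since $T_3(t)+(p-1)C\to 0$ as $t\to\infty$, one finds $t_\eta$ for which $\phi'(t,\phi_2+k\pi_p-\eta)\le-c(\eta)/2<0$ whenever $t\ge t_\eta$. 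The solution is bounded on $[a,t_\eta]$ by standard ODE theory, so I can choose $k_0=k_0(a)$ with $\phi(t_\eta)<\phi_2+k_0\pi_p-\eta$. A maximum-principle-type barrier argument---at any hypothetical first crossing $\tau^*$ one would have $\phi'(\tau^*)\ge 0$, contradicting the strict upper bound $-c(\eta)/2$---yields $\phi(t)<\phi_2+k_0\pi_p-\eta$ for every $t\ge t_\eta$, so $\lim_{t\to\infty}\phi_{3,a}(t)<\infty$. If moreover $a\ge t_\eta$, then $\phi(a)=-\pi_p/2<\phi_2-\eta$, so the argument already applies with $k_0=0$; the confinement $\phi(t)\in(-\pi_p/2,\phi_2-\eta)\subset(-\pi_p/2,0)$ together with the theory of asymptotically autonomous systems identifies $\lim_{t\to\infty}\phi_{3,a}(t)=\phi_1\in(-\pi_p/2,0)$, and $\delta(3,a)=\infty$ because $\phi$ never reaches $\pi_p/2$.

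\textbf{Case $\alpha=\bar\alpha$ and the main obstacle.}
Now $\phi'_\infty\ge 0$, vanishing only at the degenerate zeros $\phi^*+k\pi_p$, with the Taylor expansion $\phi'_\infty(\phi)\sim A(\phi-\phi^*)^2$ where $A:=C|g''(\phi^*)|/2>0$. The perturbation is
\[
\phi'(t,\phi)-\phi'_\infty(\phi)=\Bigl(C+\tfrac{T_3(t)}{p-1}\Bigr)|g(\phi)|=O(e^{-2\sqrt{-\kappa}\,t}),
\]
since $C+T_3(t)/(p-1)=C(1-\tanh(\sqrt{-\kappa}\,t))=O(e^{-2\sqrt{-\kappa}t})$. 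The substitution $u(t):=1/(\phi^*-\phi(t))$ (valid while $\phi(t)<\phi^*$) converts the leading behaviour of the ODE into the Riccati form
\[
u'(t)=A+\rho(t)\,u(t)^2,\qquad \rho(t)=O(e^{-2\sqrt{-\kappa}\,t}).
\]
Since $u'\ge A$ gives $u(t)\ge u(a)+A(t-a)$, the key task is to bound $u$ from above and rule out finite-time blowup. Here lies the main technical difficulty: for $a$ sufficiently large, one must execute a sub/super-solution bootstrap showing that $u(t)$ stays far below the Riccati blowup threshold $\sqrt{A/\rho(t)}\sim e^{\sqrt{-\kappa}\,t}$, so that $\rho\,u^2$ remains negligible and $u$ in fact grows only linearly. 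Once this is established, $\phi$ never reaches $\phi^*$, whence $\delta(3,a)=\infty$ and in particular $\lim_{a\to\infty}\delta(3,a)=\infty$. The polynomial-versus-exponential competition between $u$ and $\rho^{-1/2}$ is the analogue, in the $T_3$ model, of the corresponding delicate step of Proposition 6.4 in \cite{NV14}.
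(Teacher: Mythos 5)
The paper offers no proof of this lemma (it is quoted verbatim from \cite{NV14}), so your proposal can only be judged on its own terms. The two non-critical regimes are essentially fine: writing $g(\phi)=\cos_p^{p-1}(\phi)\sin_p(\phi)$ one has $g'=1-p|\sin_p|^p$, so $g$ is unimodal on $(-\pi_p/2,0)$ with a nondegenerate minimum $-M$ at $\phi^*$, the threshold $\bar\alpha=CM$ is the right one, the uniform bound $\phi'\ge\alpha-\bar\alpha>0$ settles $\alpha>\bar\alpha$, and for $\alpha<\bar\alpha$ the barrier just below the unstable zeros $\phi_2+k\pi_p$ traps the solution as you describe (you should still add a word on why $\lim_{t\to\infty}\phi_{3,a}$ actually \emph{exists} for every $a$, not merely that $\phi$ is bounded above -- e.g.\ via the chain-recurrence property of $\omega$-limit sets of asymptotically autonomous scalar equations, which you invoke only in the large-$a$ case).

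The genuine gap is the critical case $\alpha=\bar\alpha$, i.e.\ exactly the clause the lemma is most often cited for. You reduce it to the Riccati inequality $u'=A+\rho(t)u^2$ with $\rho(t)=O(e^{-2\sqrt{-\kappa}t})$, announce that ``one must execute a sub/super-solution bootstrap,'' and stop: as written this is a plan, not a proof, and the unexecuted step is the whole content of the case. The gap is fillable along your lines: with $\psi=\phi-\phi^*$ one gets $\psi'\le A'\psi^2+2\bar\alpha e^{-2\sqrt{-\kappa}t}$ for $|\psi|\le\epsilon_0$, the solution reaches the level $\phi^*-\epsilon_0$ within a time bounded independently of $a$ (this uniform arrival-time bound is essential and is missing from your sketch, since it is what makes $\rho$ exponentially small at arrival), and then the explicit supersolution $-\epsilon_0 e^{-\sqrt{-\kappa}(t-t_0)}$ works once $e^{-2\sqrt{-\kappa}t_0}$ is small compared with $\epsilon_0$, giving $\phi<\phi^*$ for all time and hence $\delta(3,a)=\infty$ for all large $a$. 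But note that the statement only requires $\lim_{a\to\infty}\delta(3,a)=\infty$, and for that a much softer argument suffices and avoids the polynomial-versus-exponential competition altogether: for fixed $L$, on $[a,a+L]$ the coefficient $\tanh(\sqrt{-\kappa}t)$ is uniformly close to $1$, so by Gronwall $\phi_{3,a}(a+s)$ converges uniformly on $[0,L]$ to the solution of the translation-invariant model $\phi'=\bar\alpha+Cg(\phi)$, $\phi(0)=-\pi_p/2$, which is trapped below its degenerate zero $\phi^*<\pi_p/2$; hence $\delta(3,a)>L$ for all large $a$, and $L$ was arbitrary. This finite-window comparison with the model-$2$ equation is the natural route (and is in the spirit of the argument in \cite{NV14}); your Riccati setup aims at a stronger conclusion than needed and is precisely the step you left undone.
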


For model $T_1$ we get the following result:

\begin{lemma}[\cite{NV14} Proposition 6.5]
There exist $\bar{\alpha}>0$ such that when $\alpha>\bar{\alpha}$ then $\delta(1,a)<\infty$ for all $a\in[0,\infty)$. If $\alpha\leq \bar{\alpha}$ then  $\phi_{1,a}$ has finite limit at infinity and $\delta(1,a)<\infty$ for all $a\in[0,\infty)$.
\end{lemma}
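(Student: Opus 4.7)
The strategy is to work through the Prüfer transformation (6.2) so that $\delta(1,a)<\infty$ becomes the statement that $\phi_{1,a}$ travels from $-\pi_p/2$ to $\pi_p/2$ in finite time, governed by
\[
\phi'=\alpha-\frac{T_1(t)}{p-1}\cos_p^{p-1}(\phi)\sin_p(\phi),\qquad \phi(a)=-\pi_p/2.
\]
The key structural fact is that $T_1(t)=-(n-1)\sqrt{-\kappa}\coth(\sqrt{-\kappa}\,t)\to T_2=-(n-1)\sqrt{-\kappa}$ as $t\to\infty$, with the difference decaying exponentially, and that $T_1(t)<T_2<0$ for every $t>0$. This makes the autonomous $T_2$-system the natural comparison model, while the monotonicity $|T_1|>|T_2|$ means the $T_1$-dynamics feels a \emph{stronger} coefficient than $T_2$ at every time.

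First I would define the critical $\bar\alpha$ for the autonomous $T_2$ equation, namely the infimum of $\alpha>0$ for which the right-hand side of the corresponding Prüfer equation is strictly positive on all of $(-\pi_p/2,\pi_p/2)$. Equivalently, $\bar\alpha=\sup\{-\frac{T_2}{p-1}\cos_p^{p-1}(\phi)\sin_p(\phi):\phi\in(-\pi_p/2,0)\}$, since only in the lower half-period can the nonlinear term obstruct the growth of $\phi$. For the supercritical case $\alpha>\bar\alpha$, uniform positivity of the $T_2$-flow together with the pointwise comparison $|T_1(t)|\geq |T_2|$ yields $\phi_{1,a}(t)\geq \phi_{2,a}(t)$ on $(0,\pi_p/2)$; combined with the observation that on $(-\pi_p/2,0)$ the term $\alpha-T_1\cos_p^{p-1}\sin_p/(p-1)$ is bounded below by $\alpha-\bar\alpha>0$, this gives $\delta(1,a)\le C(\alpha)$ uniformly in $a\in[0,\infty)$.

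For the subcritical case $\alpha\le\bar\alpha$, I would first establish $\delta(1,a)<\infty$ by exploiting the strict inequality $|T_1(t)|>|T_2|$ for all finite $t$: although the autonomous system with $T_2$ admits an equilibrium $\phi^*\in(-\pi_p/2,0)$ where $\phi'=0$, the strict inequality means the $T_1$-field dominates $T_2$ on $(-\pi_p/2,0)$, producing a strictly positive drift that pushes $\phi_{1,a}$ past $\phi^*$; quantitative lower bounds on $T_1-T_2$ on compact sets then force $\phi_{1,a}$ to reach $\pi_p/2$ in finite time. For the statement about the finite limit at infinity, I would extend $\phi_{1,a}$ past the first critical point and compare with $\phi_{2,a}$ using Gronwall against the exponentially decaying perturbation $T_1(t)-T_2=O(e^{-2\sqrt{-\kappa}\,t})$; since the autonomous limit system has $\phi_{2,a}(t)\to\phi^*$ for $\alpha<\bar\alpha$, this transfers to $\phi_{1,a}$.

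The principal obstacle is two-fold. The boundary case $a=0$, where $T_1$ has a $1/t$ singularity, requires the fixed-point construction of Proposition 6.1 plus a quantitative argument that $\phi$ leaves $-\pi_p/2$ fast enough to beat the blowup of the coefficient --- here one uses $\cos_p^{p-1}(\phi)=O((\phi+\pi_p/2)^{p-1})$ near $\phi=-\pi_p/2$ so that the singular term is still integrable. The other, subtler, obstacle is the subcritical analysis: near $\phi^*$ the autonomous drift vanishes linearly, so the Gronwall comparison has to absorb a small-but-persistent perturbation without losing control of where the trajectory settles; this is where the exponential decay of $T_1-T_2$ is essential, making the perturbation summable and therefore harmless.
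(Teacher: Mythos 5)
The paper gives no proof of this lemma (it simply cites \cite{NV14}, Prop.~6.5), so your proposal has to be judged against what the Pr\"ufer dynamics actually does, and there the core of your argument runs backwards. With the paper's conventions, $T_1(t)=-(n-1)\sqrt{-\kappa}\coth(\sqrt{-\kappa}\,t)<T_2<0$, so on the lower half-period $\phi\in(-\pi_p/2,0)$ the drift $-\tfrac{T_1(t)}{p-1}\cos_p^{p-1}(\phi)\sin_p(\phi)$ is \emph{negative}, and $|T_1|>|T_2|$ makes this obstruction \emph{stronger} than in the $T_2$ model, not weaker; it does not produce ``a strictly positive drift that pushes $\phi_{1,a}$ past $\phi^*$''. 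Concretely, if $\alpha\le\bar\alpha$ then for every $t>0$ the map $\phi\mapsto\alpha-\tfrac{T_1(t)}{p-1}\cos_p^{p-1}(\phi)\sin_p(\phi)$ has a first zero $\phi_1^*(t)\in(-\pi_p/2,0)$, and since $\coth$ is strictly decreasing, $\phi_1^*(t)$ is strictly increasing in $t$ with limit the autonomous equilibrium $\phi^*\le 0$ of the $T_2$ system. A trajectory with $\phi_{1,a}(a)=-\pi_p/2<\phi_1^*(a)$ can never cross this increasing barrier (at a first crossing one would need $\phi_{1,a}'=0\ge(\phi_1^*)'>0$), so $\phi_{1,a}$ increases to a finite limit (in fact $\phi^*$) and never reaches $\pi_p/2$: in the subcritical case $\delta(1,a)=\infty$. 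So your subcritical argument attempts to prove something false; the statement as printed here contains a typo relative to \cite{NV14}, whose Proposition 6.5 asserts $\delta(1,a)=\infty$ when $\alpha\le\bar\alpha$, which is also what the rest of Section 6 uses (for $\alpha\le\bar\alpha$ only the $T_3$ model is invoked to match $\max u$).

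The same directional error weakens your supercritical case: on $(-\pi_p/2,0)$ the model-1 right-hand side is only bounded below by $\alpha-\tfrac{|T_1(t)|}{p-1}\max_\phi|\cos_p^{p-1}\sin_p|$, which is negative for small $t$ because $\coth(\sqrt{-\kappa}\,t)\to\infty$ as $t\to0^+$; the uniform bound $\alpha-\bar\alpha$ you invoke is the $T_2$ bound, and $|T_1|\ge|T_2|$ pushes the estimate the other way. The conclusion $\delta(1,a)<\infty$ for $\alpha>\bar\alpha$ is still true, but the correct route is to wait: there is a finite $t^*(\alpha)$ with $\tfrac{|T_1(t)|}{p-1}\max_\phi|\cos_p^{p-1}\sin_p|<\alpha$ for $t>t^*$, after which $\phi'$ is bounded below by a positive constant and $\phi$ reaches $\pi_p/2$ in finite time, while before $t^*$ one only needs that $\phi$ stays in $[-\pi_p/2,\pi_p/2)$ (it cannot cross $-\pi_p/2$ since the right-hand side equals $\alpha>0$ there). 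Finally, your displayed definition of $\bar\alpha$ has the wrong sign: $\sup_{\phi\in(-\pi_p/2,0)}\{-\tfrac{T_2}{p-1}\cos_p^{p-1}\sin_p\}=0$ because that expression is negative on this interval; the critical value is $\bar\alpha=\sup_\phi\tfrac{T_2}{p-1}\cos_p^{p-1}(\phi)\sin_p(\phi)=\tfrac{(n-1)\sqrt{-\kappa}}{p-1}\max_\phi|\cos_p^{p-1}(\phi)\sin_p(\phi)|$, i.e.\ the maximal size of the obstructing term.
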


Both cases $\alpha<\bar{\alpha}$ and $\alpha\geq\bar{\alpha}$ need to be considered in proving the case (2) of Theorem \ref{Existence of Exact Comparison}. When $\alpha<\bar{\alpha}$ we can always use model $T_3$ to produce the whole range comparison solutions $w$, i.e. $0< \max w \leq 1$, and when $\alpha\geq\bar{\alpha}$ we have restriction on the maximum value that $u$ can achieve. More precisely we have:

\begin{lemma}[\cite{NV14} Proposition 6.6]
Let $\alpha\leq\bar{\alpha}$. Then for each $0<\max u\leq 1$, there is an $a\in[-\bar{a},\infty)$ such that $m(3,a)=\max u$.
\end{lemma}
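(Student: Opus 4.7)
The idea is to make $m(3,\cdot)$ a continuous, strictly monotone function of $a$ on $[-\bar a,\infty)$, identify its endpoint behaviour, and apply the intermediate value theorem to realize every target $\max u\in(0,1]$.

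\textbf{Endpoints.} For the \emph{upper endpoint}, Proposition~\ref{Kappa Negative Odd Solution} produces $\bar a>0$ with $w_{3,-\bar a}$ odd and $m(3,-\bar a)=1$, pinning the image of $m(3,\cdot)$ at $1$ when $a=-\bar a$. For the \emph{lower endpoint}, I would invoke the Pr\"ufer phase analysis of the preceding NV14 lemma: under the hypothesis $\alpha\le\bar\alpha$, for $a$ sufficiently large $\phi_{3,a}(t)$ has a finite limit strictly inside $(-\pi_p/2,0)$. Via the Pr\"ufer relation $\alpha w_{3,a}=e_{3,a}\sin_p(\phi_{3,a})$ and $e_{3,a}>0$, this forces $\sin_p(\phi_{3,a})<0$, hence $w_{3,a}(t)<0$ for all $t>a$, and therefore $m(3,a)\le 0$ for all sufficiently large $a$.

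\textbf{Continuity, monotonicity, and the IVT.} Where $b(3,a)<\infty$, the Pr\"ufer equation characterizes $b(3,a)$ by $\phi_{3,a}(b)=\pi_p/2$, and the standard continuous dependence of ODE solutions on initial data gives continuity of $m(3,a)$. Across the threshold where $b(3,a)$ passes from finite to infinite one reinterprets $m(3,a)=\lim_{t\to\infty}w_{3,a}(t)$; matching the two descriptions continuously rests on the same phase analysis together with local uniform control of $e_{3,a}$. For strict monotonicity I would reuse the argument from the $\kappa>0$ case: if $m(3,a_1)=m(3,a_2)$ then the gradient comparison Theorem~\ref{Gradient Comparison} forces $w'_{3,a_1}\circ w_{3,a_1}^{-1}=w'_{3,a_2}\circ w_{3,a_2}^{-1}$, so the two solutions are translates of one another; however $T_3(t)=-(n-1)\sqrt{-\kappa}\tanh(\sqrt{-\kappa}\,t)$ is not translation-invariant, which forces $a_1=a_2$. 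Combining the continuous strictly monotone function $m(3,\cdot)$ with $m(3,-\bar a)=1$ and $m(3,a)\le 0$ for large $a$, the intermediate value theorem produces, for every $0<\max u\le 1$, an $a\in[-\bar a,\infty)$ with $m(3,a)=\max u$.

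\textbf{Main obstacle.} The most delicate step is verifying continuity of $m(3,a)$ at the transition from $b(3,a)<\infty$ to $b(3,a)=\infty$, where the maximum of $w_{3,a}$ changes from a value attained at a finite critical point to merely an asymptotic supremum. Reconciling these two regimes requires a careful Pr\"ufer-coordinate analysis at infinity and makes essential use of the hypothesis $\alpha\le\bar\alpha$ to guarantee convergence of $\phi_{3,a}(t)$ as $t\to\infty$; the borderline case $\alpha=\bar\alpha$ should be handled by a further limiting argument from the subcritical regime.
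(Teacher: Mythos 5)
Your overall strategy is the same as that of the cited source (\cite{NV14}, Proposition~6.6), which this paper does not reprove: pin $m(3,-\bar a)=1$ via Proposition~\ref{Kappa Negative Odd Solution}, show $m(3,a)\le 0$ for $a$ large via the Pr\"ufer phase limit, and conclude by continuity and the intermediate value theorem. However, as written there is a genuine gap, and it sits exactly at the step you defer: you never control $m(3,\cdot)$ across the transition where $b(3,a)$ becomes infinite. Continuous dependence on $a$ controls $w_{3,a}$ only on compact time intervals, whereas near the transition the maximum is attained at times $b(3,a)\to\infty$; a priori $m(3,\cdot)$ could jump from values $\ge\max u$ straight down to $\le 0$, and then the IVT argument collapses. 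The missing ingredient is quantitative and comes from the Pr\"ufer radius: since $T_3\le 0$ on $[0,\infty)$, the equation $\frac{d}{dt}\log e=\frac{T_3}{p-1}\cos_p^p(\phi)$ makes $e_{3,a}$ non-increasing for $t\ge 0$, and when $b(3,a)<\infty$ one has $m(3,a)=e_{3,a}(b(3,a))/\alpha\le e_{3,a}(T)/\alpha$ for every $T\in[0,b(3,a)]$; at a transition parameter $a^*$ the phase $\phi_{3,a^*}$ converges (by the preceding quoted lemma, since $T_3$ has a limit at infinity) to a root of the limiting phase equation lying in $(-\pi_p/2,0)$, where $\cos_p\phi\neq 0$, so $e_{3,a^*}$ decays exponentially and hence $m(3,a)\to 0$ as $a\uparrow a^*$. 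This is what supplies a ``lower endpoint'' inside the region where $m(3,\cdot)$ is genuinely continuous (there $b(3,a)$ is a nondegenerate zero of $(w')^{(p-1)}$ because $\frac{d}{dt}(w')^{(p-1)}(b)=-\lambda w(b)^{(p-1)}<0$, as $w(b)>0$), and only then does the IVT yield every value in $(0,1]$. Note also that for $\alpha=\bar\alpha$ your endpoint claim $m(3,a)\le 0$ for large $a$ is not provided by the quoted phase lemma, and ``approximating from $\alpha<\bar\alpha$'' does not obviously work, since the parameters $a$ obtained subcritically are uncontrolled as $\alpha\uparrow\bar\alpha$; the direct route is to use $\lim_{a\to\infty}\delta(3,a)=\infty$ together with the same exponential decay of $e$ to get $m(3,a)\to 0$ as $a\to\infty$.

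Separately, the strict monotonicity step is both unnecessary (the IVT needs only continuity and the endpoint values) and not licensed as you state it: Theorem~\ref{Gradient Comparison} compares an eigenfunction of $L_p$ on $M$, with $\lambda$ the first nonzero Neumann eigenvalue of $M$, against a model solution. To apply it with $w_{3,a_1}$ in the role of $u$ and $w_{3,a_2}$ in the role of $w$, you would have to verify that the weighted interval $\big([a_1,b(3,a_1)],\ \cosh^{n-1}(\sqrt{-\kappa}\,t)\,dt\big)$ is itself an admissible space for that theorem and, more importantly, that $\lambda$ is the first nonzero Neumann eigenvalue of that one-dimensional problem; this holds because $w_{3,a_1}$ is monotone, but it is an extra argument you do not supply, and in \cite{NV14} monotonicity of $m(3,\cdot)$ is established only in the complementary regime $\alpha>\bar\alpha$ (quoted here as the lemma immediately following this one). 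I recommend dropping the monotonicity claim and running the argument purely by continuity plus IVT, with the transition behavior handled as above.
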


We can also see that model $T_2$ is translation invariant, hence for all $a\in[0,\infty)$, $m(2,a)=m_2$ is a constant. For model $T_1$ and $T_3$ we have

\begin{lemma}[\cite{NV14} Proposition 6.7]
If $\alpha>\bar{\alpha}$, then $m(3,a)$ is a decreasing function of $a$, while $m(1,a)$ is an increasing function of $a$ and 
$$\lim_{a\to\infty}m(3,a)=\lim_{a\to\infty}m(1,a)=m_2.$$
\end{lemma}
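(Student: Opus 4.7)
The proof follows the structure of \cite{NV14} Proposition 6.7, since the model ODEs in (\ref{Model Equation}) are formally identical to those studied there; the difference between the Bakry-Emery and classical Riemannian settings has already been absorbed in the earlier gradient comparison (Theorem \ref{Gradient Comparison}) and plays no role in the one-dimensional analysis that follows. My plan is to combine the Prüfer transformation with the monotonicity of $T_1$ and $T_3$ in $t$, and separately with continuous dependence on parameters for the limit.

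First, I would pass to Prüfer coordinates translated to the origin: set $\widetilde\phi_{i,a}(s):=\phi_{i,a}(s+a)$ and $\widetilde e_{i,a}(s):=e_{i,a}(s+a)$, so that
$$\widetilde\phi_{i,a}'(s)=\alpha-\frac{T_i(s+a)}{p-1}\cos_p^{p-1}(\widetilde\phi_{i,a})\sin_p(\widetilde\phi_{i,a}),\qquad\widetilde\phi_{i,a}(0)=-\frac{\pi_p}{2}.$$
Writing $\delta(i,a)$ for the first $s>0$ at which $\widetilde\phi_{i,a}(s)=\pi_p/2$, and using $m(i,a)=\widetilde e_{i,a}(\delta(i,a))/\alpha$, one obtains the integral representation
$$\log m(i,a)=\int_0^{\delta(i,a)}\frac{T_i(s+a)}{p-1}\cos_p^p\bigl(\widetilde\phi_{i,a}(s)\bigr)\,ds,$$
which is the central object of the analysis.

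For the convergence $m(i,a)\to m_2$, observe that as $a\to\infty$ both $T_1(s+a)$ and $T_3(s+a)$ converge locally uniformly in $s$ to the constant $T_2=-(n-1)\sqrt{-\kappa}$, because $\coth$ and $\tanh$ both tend to $1$ at infinity. Standard continuous-dependence results for ODEs then give $\widetilde\phi_{i,a}\to\widetilde\phi_{2,0}$ and $\widetilde e_{i,a}\to\widetilde e_{2,0}$ uniformly on $[0,\delta_2+1]$, where $\delta_2:=\delta(2,0)$ is finite because the hypothesis $\alpha>\bar\alpha$ already forces oscillation in the constant-coefficient $T_2$ problem. Passing to the limit in the integral representation then yields $m(i,a)\to m_2$ for both $i=1$ and $i=3$.

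For the monotonicity, the key structural input is that $T_3$ is strictly decreasing in $t$ while $T_1$ is strictly increasing in $t$ on $(0,\infty)$. In the $T_3$ case with $a_1<a_2$, I would compare $w_{3,a_1}$ with the shifted trajectory $\widehat w(t):=w_{3,a_2}(t+a_2-a_1)$, which solves the same form of ODE but with coefficient $T_3(t+a_2-a_1)<T_3(t)$ and the same Cauchy data $(-1,0)$ at $t=a_1$. After changing variables from $t$ to $\phi$, the amplitude equation takes the form $\log m(i,a)=\int_{-\pi_p/2}^{\pi_p/2} G(T_i(t(\phi)+a),\phi)\,d\phi$ with
$$G(T,\phi):=\frac{T\cos_p^p(\phi)/(p-1)}{\alpha-T\cos_p^{p-1}(\phi)\sin_p(\phi)/(p-1)},$$
and a direct computation gives $\partial G/\partial T=\alpha\cos_p^p(\phi)/[(p-1)(\mathrm{denom})^2]\ge 0$, so $G$ is monotone nondecreasing in $T$. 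Combining this with a phase-plane comparison of the two trajectories yields $m(3,a_1)>m(3,a_2)$; the $T_1$ case is symmetric with reversed signs.

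The main technical obstacle is the coupling between the coefficient $T_i(\cdot+a)$ and the trajectory $\widetilde\phi_{i,a}(\cdot)$: changing $a$ alters both simultaneously, so pointwise monotonicity in $T$ does not immediately transfer to monotonicity of the integral because the phase trajectory itself shifts. Equivalently, differentiating in $a$ produces a variational equation for $\partial_a\widetilde\phi_{i,a}$ whose coefficient $T_i\cdot(\cos_p^{p-1}\sin_p)'(\phi)$ changes sign at $\phi=0$, blocking a naive Gronwall argument. Overcoming this requires the careful symmetrization and splitting of the phase interval $[-\pi_p/2,\pi_p/2]$ across $\phi=0$ carried out in \cite{NV14}; since the Prüfer system and the monotonicity properties of $T_1,T_3$ are identical here, that argument transfers verbatim.
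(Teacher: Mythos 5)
The paper offers no proof of this lemma at all --- it is quoted directly from \cite{NV14}, Proposition 6.7 --- so your partial re-derivation already goes beyond the paper, and most of it is sound. The translated Pr\"ufer system, the amplitude formula $\log m(i,a)=\int_0^{\delta(i,a)}\tfrac{T_i(s+a)}{p-1}\cos_p^p(\widetilde\phi_{i,a})\,ds$, and the limit argument (locally uniform convergence of $T_1,T_3$ to the constant $T_2$, continuous dependence, finiteness of $\delta(2,0)$ when $\alpha>\bar\alpha$, and transversality of the crossing $\phi=\pi_p/2$ since $\phi'=\alpha$ there) are all correct. For the monotonicity, however, the obstacle you name (the coupling of $T_i(\cdot+a)$ with the trajectory, and the sign change of the variational coefficient at $\phi=0$) has a much simpler resolution than the ``symmetrization across $\phi=0$'' you appeal to: after reparametrizing by the phase, the curves $t_a(\phi)$ for different $a$ all solve the \emph{same} scalar ODE $dt/d\phi=\bigl(\alpha-\tfrac{T_i(t)}{p-1}\cos_p^{p-1}(\phi)\sin_p(\phi)\bigr)^{-1}$ and differ only in the initial value $t_a(-\pi_p/2)=a$, so by uniqueness they never cross and $t_{a_2}(\phi)>t_{a_1}(\phi)$ for all $\phi$ whenever $a_2>a_1$. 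Combined with your computation $\partial G/\partial T\ge 0$ and the strict monotonicity of $T_3$ (decreasing in $t$) and $T_1$ (increasing in $t$), this gives the pointwise comparison of the integrands and hence the monotonicity of $m(3,a)$ and $m(1,a)$; no splitting of the phase interval is needed.

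The one genuine gap is the sentence ``the $T_1$ case is symmetric with reversed signs.'' The reparametrization by $\phi$ requires $\phi'>0$ along the whole trajectory. For $i=3$ this follows from $\alpha>\bar\alpha$ because $|T_3|\le(n-1)\sqrt{-\kappa}$, but for $i=1$ the coefficient $T_1(t)=-(n-1)\sqrt{-\kappa}\,\mathrm{cotanh}(\sqrt{-\kappa}t)$ is unbounded as $t\to 0^+$, so positivity of $\phi'$ is not automatic when $a$ is small and the phase may fail to be monotone there; your sketch does not cover this case, and it needs a separate argument (as in \cite{NV14}) rather than a sign flip. Since the paper itself handles the entire lemma by citation, deferring this point is defensible, but it should be flagged rather than described as symmetric.
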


Combining the Proposition \ref{Kappa Negative Odd Solution} and Lemmas above, we know that in the case $\kappa<0$, if $m(1,0)\leq \max u\leq 1$, there is always a model solution $w$ to $T_1,T_2$ or $T_3$ such that $\max w = \max u$.

\subsection{Diameter Comparison}

In order to get the eigenvalue comparison with one-dimensional moder of the same diameter bound, we still need to understand how $\lambda_D$ varies with the diameter. Again we will follow \cite{NV14}.

\begin{definition}
We define the minimum diameter of the one-dimesional model associated with $\lambda$ to be
$$\bar{\delta}_i(\lambda)=\min\{\delta(i,a)|i=0,1,2,3,a\in I_i\}$$
\end{definition}

The following propositions deals with the lower bound of $\bar{\delta}_i(\lambda)$ for $i=0,1,2,3$:

\begin{proposition}
\label{delta lower bound positive}
For $i=0$ and any $a\in I_0$, we have $\delta(0,a)>2\hat{a}$, where $\hat{a}<\pi/(2\sqrt{\kappa})$ is such that $w_{0,-\hat{a}}$ is odd. 
\end{proposition}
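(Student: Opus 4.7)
My plan is to combine a reflection symmetry of the model ODE, which identifies the symmetric interval $[-\hat a,\hat a]$ as the unique candidate for the minimum, with a monotonicity argument in the initial time $a$. This follows the positive-curvature analogue of the approach in \cite{BQ00} and \cite{NV14}.

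\textbf{Step 1 (Reflection involution).} The oddness of $T_0(t)=(n-1)\sqrt{\kappa}\tan(\sqrt{\kappa}\,t)$ on $I_0$ makes the model equation invariant under $w(t)\mapsto -w(-t)$: a direct substitution shows that if $w$ solves (\ref{Model Equation}) with $T=T_0$, then so does $-w(-t)$. Applied to $w=w_{0,a}$ on $[a,a+\delta(0,a)]$ and rescaled by $1/m(0,a)$ via the $p$-homogeneity of the ODE, one obtains, by uniqueness, the solution $w_{0,a'}$ with $a':=-a-\delta(0,a)$. This yields an involution $\iota:a\mapsto a'$ on admissible starting points that preserves $\delta(0,\cdot)$ and sends $m(0,\cdot)$ to its reciprocal. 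A fixed point must satisfy $a+\delta(0,a)/2=0$ and $m(0,a)=1$, so by the uniqueness in Proposition \ref{Kappa Positive Odd Solution} the only fixed point is $a=-\hat a$, with $\delta(0,-\hat a)=2\hat a$.

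\textbf{Step 2 (Strict monotonicity in $a$).} In the Pr\"ufer variables of the previous subsection, the phase $\phi_a$ is the unique solution of (6.2) with $\phi_a(a)=-\pi_p/2$, and $\delta(0,a)$ is the first positive time at which $\phi_a$ reaches $\pi_p/2$. Differentiating the identity $\phi_a(a+\delta(0,a))=\pi_p/2$ in $a$ and using that $\phi_a'=\alpha$ at both endpoints (the nonlinear factor $\cos_p^{(p-1)}\sin_p$ vanishes at $\phi=\pm\pi_p/2$), one obtains
\begin{equation*}
\frac{d}{da}\delta(0,a)=-1-\frac{1}{\alpha}\,\partial_a\phi_a\bigl(a+\delta(0,a)\bigr).
\end{equation*}
The sensitivity $\eta(s):=\partial_a\phi_a(a+s)$ satisfies a linear first-order ODE with source $-T_0'(a+s)\cos_p^{(p-1)}(\phi_a)\sin_p(\phi_a)/(p-1)$. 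Solving by variation of constants and splitting the resulting integral at the unique zero of $\phi_a$ in $(a,a+\delta(0,a))$, I pair the two halves through the reflection of Step 1 so that the halves become symmetric about the midpoint of $[a,a+\delta(0,a)]$. The strict monotonicity of $T_0$ then leaves a sign-definite residual whose sign matches that of the midpoint displacement $a+\delta(0,a)/2$; thus $\delta(0,\cdot)$ is strictly decreasing for $a<-\hat a$ and strictly increasing for $a>-\hat a$, and $\delta(0,a)>2\hat a$ for every $a\neq -\hat a$ follows, with equality only at $a=-\hat a$ by Step 1.

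\textbf{Main obstacle.} The sign analysis in Step 2 is the delicate point: $\cos_p^{(p-1)}(\phi)\sin_p(\phi)$ is negative on $\{-\pi_p/2<\phi<0\}$ and positive on $\{0<\phi<\pi_p/2\}$, while $T_0'(t)=(n-1)\kappa\sec^2(\sqrt{\kappa}\,t)>0$ throughout $I_0$, so the two halves of the sensitivity integral have opposite signs and direct domination is impossible. The reflection pairing from Step 1 is precisely what makes the comparison symmetric and forces a definite sign. Should this pairing prove too intricate, a fallback is a variational argument using the Rayleigh quotient together with the symmetry and log-concavity of $\mu_0(t)=\cos^{n-1}(\sqrt{\kappa}\,t)$ on $I_0$: for fixed length $D$, the first nonzero Neumann eigenvalue of the weighted $p$-Laplacian on $[a,a+D]$ is strictly maximized at $a=-D/2$, which (combined with the monotonicity of eigenvalues in interval length) yields $\delta(0,a)\geq 2\hat a$ with equality only at $a=-\hat a$.
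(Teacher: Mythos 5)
Your Step 1 is correct and is genuinely useful: the oddness of $T_0$, the $(p-1)$-homogeneity of the model equation (\ref{Model Equation}), and the uniqueness in Proposition \ref{Existence and Uniqueness} do give the involution $\iota(a)=-a-\delta(0,a)$ with $\delta(0,\iota(a))=\delta(0,a)$ and $m(0,\iota(a))=1/m(0,a)$, whose unique fixed point is $a=-\hat a$ by Proposition \ref{Kappa Positive Odd Solution}. This is the symmetry ingredient of the argument the paper invokes (the paper itself only cites \cite{NV14}, Proposition 8.4, whose proof rests on this symmetry together with the convexity $T_0'>0$). But the involution alone says nothing about the size of $\delta(0,a)$ for $a\neq-\hat a$: it is perfectly compatible with $\delta(0,\cdot)$ dipping below $2\hat a$ on an $\iota$-invariant set, so the entire burden falls on your Step 2.

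Step 2 is where the gap lies, and it is a genuine one. First, the proposed ``pairing of the two halves through the reflection of Step 1'' has no basis when $a\neq-\hat a$: the reflection sends $w_{0,a}$ to the \emph{different} solution $w_{0,\iota(a)}$, not to itself, so the phase $\phi_a$ is not symmetric about the midpoint of $[a,a+\delta(0,a)]$ and its zero is not at that midpoint; this asymmetry is precisely what has to be controlled, not assumed. Second, the sensitivity bookkeeping is inconsistent: if $\psi(t)=\partial_a\phi_a(t)$ at fixed $t$, then $\psi$ solves the \emph{homogeneous} linearized equation with $\psi(a)=-\alpha$ and one gets $\delta'(a)=-1-\psi(a+\delta)/\alpha$, whereas if $\eta(s)=\partial_a\bigl[\phi_a(a+s)\bigr]$, then $\eta(0)=0$, $\eta$ carries the source $-T_0'(a+s)\cos_p^{p-1}(\phi_a)\sin_p(\phi_a)/(p-1)$, and $\delta'(a)=-\eta(\delta)/\alpha$ with no extra $-1$; your displayed formula mixes the two conventions. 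Third, and most importantly, the sign-definiteness of the resulting integral is exactly the content of the proposition and is nowhere established: the source changes sign at the zero of $\phi_a$, the variation-of-constants weight (built from $T_0\,\partial_\phi(\cos_p^{p-1}\sin_p)$ along the trajectory) is not symmetric about the midpoint, and you yourself flag that the comparison may fail. The fallback is likewise only an assertion: the claim that, at fixed length, the weighted Neumann $p$-eigenvalue on $[a,a+D]$ with weight $\cos^{n-1}(\sqrt{\kappa}\,t)$ is maximized by the centered interval is essentially a restatement of the proposition (via the monotone relation between $\lambda$ and $\delta$) and would itself require the convexity argument. To close the gap you must actually carry out a comparison exploiting $T_0'>0$, for instance comparing $\phi_a$ with a reflected phase $-\phi_a(2c-t)$ about a suitably chosen center $c$ via the first-order comparison principle, as is done in \cite{NV14}, Proposition 8.4, rather than appealing to a symmetry that non-symmetric solutions do not possess.
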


\begin{proof}
The proof here is based on the symmetry and convexity of the model $T_0$. See \cite{NV14} Proposition 8.4 for the proof.
\end{proof}

For the case $\kappa < 0$, we cite the following results from \cite{NV14}:

\begin{proposition}[\cite{NV14}, Proposition 8.2]
For $i=1,2$ and any $a\in I_i$, we have $\delta(i,a)>\frac{\pi_p}{\alpha}$. 
\end{proposition}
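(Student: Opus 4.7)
The plan is to convert the quantity $\delta(i,a)$ into an integral over the Prüfer phase $\phi$, and then apply Cauchy--Schwarz in its arithmetic/harmonic-mean form. Since $w_{i,a}(a)=-1$ and $w'_{i,a}(a)=0$, the initial phase is $\phi(a)=-\pi_p/2$, and $\delta(i,a)$ is the first subsequent time that $\cos_p(\phi)=0$. Recall that
$$\phi'(t)=\alpha-\frac{T_i(t)}{p-1}g(\phi),\qquad g(\phi):=\cos_p^{(p-1)}(\phi)\sin_p(\phi).$$
At $\phi=-\pi_p/2$ one has $g=0$ and hence $\phi'=\alpha>0$, so $\phi$ is forced away from $-\pi_p/2$ and cannot return to it from above. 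On $(0,\pi_p/2)$, $g>0$ and $-T_i/(p-1)>0$, so $\phi'>\alpha$ there. Consequently either $\phi$ reaches $\pi_p/2$ in finite time $\delta$ (having increased monotonically), or $\phi$ is trapped in $(-\pi_p/2,0]$ for all $t>a$; in the latter case $w'>0$ throughout, so $\delta(i,a)=\infty$ and the conclusion holds trivially.

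Assuming the nontrivial case, change variables $t\leftrightarrow\phi$ to obtain
$$\delta(i,a)=\int_{-\pi_p/2}^{\pi_p/2}\frac{d\phi}{h(\phi)},\qquad h(\phi):=\alpha-\frac{T_i(t(\phi))}{p-1}g(\phi).$$
By Cauchy--Schwarz,
$$\pi_p^2=\left(\int_{-\pi_p/2}^{\pi_p/2}d\phi\right)^2\leq \left(\int_{-\pi_p/2}^{\pi_p/2}h(\phi)\,d\phi\right)\cdot\delta(i,a),$$
with strict inequality because $h$ is not constant (as $T_i\neq 0$ and $g\not\equiv 0$). Hence it suffices to show $\int h\,d\phi\leq \alpha\pi_p$, equivalently
$$J_i:=\int_{-\pi_p/2}^{\pi_p/2}T_i(t(\phi))g(\phi)\,d\phi\;\geq\;0.$$

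For $T_2=-(n-1)\sqrt{-\kappa}$, which is a negative constant, $J_2=T_2\int g\,d\phi=0$, because $g$ is odd ($\sin_p$ is odd, $\cos_p$ is even). Combined with the strict Cauchy--Schwarz, this gives $\delta(2,a)>\pi_p/\alpha$. For $T_1$, decompose at $\phi=0$ and substitute $\phi\to-\phi$ on the negative half:
$$J_1=\int_0^{\pi_p/2}\bigl[T_1(t(\phi))-T_1(t(-\phi))\bigr]g(\phi)\,d\phi.$$
Since $|T_1(t)|=(n-1)\sqrt{-\kappa}\coth(\sqrt{-\kappa}t)$ is strictly decreasing on $(0,\infty)$, $T_1$ is strictly increasing in $t$; because $t(\phi)>t(-\phi)$ for $\phi>0$, the bracket is positive, and $g>0$ there, so $J_1>0$. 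Hence $\int h\,d\phi<\alpha\pi_p$, and the Cauchy--Schwarz bound gives $\delta(1,a)>\pi_p/\alpha$.

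The main obstacle I anticipate is the monotonicity step, i.e.\ ensuring that the change of variables $t\leftrightarrow \phi$ is well defined. For $T_2$ this follows from autonomy: if $\phi'=0$ at some $\phi^*\in(-\pi_p/2,0)$ then uniqueness forces $\phi\equiv\phi^*$ afterwards, so $\delta=\infty$ and we are in the trivial case. For $T_1$, whose vector field is non-autonomous and blows up at $t=0$, one has to argue that either $\phi$ rises all the way to $\pi_p/2$, or else $\phi$ stays below $\pi_p/2$ with $w'>0$ forever, yielding $\delta=\infty$. The key point is that once $\phi$ fails to turn around at $-\pi_p/2$ (which it cannot, by the boundary computation $\phi'(a)=\alpha$), the only other way $\cos_p(\phi)$ can hit zero is at $\phi=\pi_p/2$. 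Once this case analysis is in place, the Cauchy--Schwarz and odd/monotonicity arguments are routine.
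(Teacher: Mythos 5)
Your strategy -- passing to the Pr\"ufer phase, writing $\delta(i,a)=\int_{-\pi_p/2}^{\pi_p/2}d\phi/h(\phi)$, and combining Cauchy--Schwarz (equivalently AM--HM) with the oddness of $g$ for the constant model and the monotonicity of $T_1$ for model $1$ -- is sound, and since the paper offers no proof beyond citing \cite{NV14}, a self-contained argument of this type is exactly what is needed; the sign computations ($J_2=0$, $J_1>0$, strictness from the non-constancy of $h$) are correct. The one genuine gap is precisely the point you flag but do not resolve: for the non-autonomous model $T_1$ you never establish that $t\mapsto\phi(t)$ is strictly increasing on $[a,b]$. The dichotomy you offer instead (``either $\phi$ reaches $\pi_p/2$, or $w'>0$ forever'') only locates the first zero of $\cos_p(\phi)$; it does not rule out that $\phi$ rises, stalls and dips in the region $(-\pi_p/2,0)$ where $g<0$ (note $T_1(t)\to-\infty$ as $t\to 0^+$, so the perturbation of $\alpha$ need not be small), and then reaches $\pi_p/2$ later. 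Without injectivity of $t\mapsto\phi$, both the identity $\delta=\int d\phi/h$ and the key inequality $t(\phi)>t(-\phi)$ used to get $J_1>0$ are unjustified. Your autonomy argument legitimately handles $T_2$, but it has no analogue for $T_1$ as written.

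Fortunately the gap closes with the same monotonicity of $T_1$ you already exploit. Suppose $t_1>a$ were the first zero of $\phi'$. Since $\phi'=\alpha>0$ at $\phi=-\pi_p/2$ and $\phi'\ge\alpha$ wherever $\phi\in[0,\pi_p/2]$ (there $g\ge 0$ and $-T_1>0$), necessarily $\phi(t_1)\in(-\pi_p/2,0)$, where $g(\phi(t_1))<0$ and everything is smooth. Differentiating the phase equation and using $\phi'(t_1)=0$ gives $\phi''(t_1)=-\frac{T_1'(t_1)}{p-1}\,g(\phi(t_1))>0$, because $T_1'(t)=(n-1)(-\kappa)/\sinh^2(\sqrt{-\kappa}\,t)>0$. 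But $\phi'>0$ on $[a,t_1)$ together with $\phi'(t_1)=0$ forces $\phi''(t_1)\le 0$, a contradiction. Hence $\phi'>0$ as long as $\phi<\pi_p/2$, so either $\phi$ increases monotonically to $\pi_p/2$ (and your change of variables, $t(\phi)>t(-\phi)$, and the rest of the argument are valid), or $\phi$ never gets there and $\delta(1,a)=\infty$ trivially. With this short lemma inserted, your proof is complete, and it is in substance the symmetrization argument behind the result cited from \cite{NV14}.
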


Model 3 needs a little bit careful attention. For this one we notice first that there is always $\bar{a}>0$ with an odd solution for initial data at $-\bar{a}$. Namely $w_{3,\bar{a}}$ is odd function with min $-1$ and max $1$. This is a critical situation which minimizes the diameter $D$ given $\lambda$:

\begin{proposition}[\cite{NV14}, Proposition 8.4]
For $i=3$ and $a\in\mathbb{R}$, we have
$$\delta(3,a)\geq\delta(3,\bar{a})=2\bar{a}$$
and if $a\neq -\bar{a}$, the inequality is strict.
\end{proposition}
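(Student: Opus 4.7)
The approach is to use the Prüfer transformation to rewrite $\delta(3,a)$ as the travel time of a phase variable between fixed endpoints, and then to combine the oddness of $T_3$ with a variational computation to pin the unique minimum at $a=-\bar a$. Let $\phi_a(t)$ denote the Prüfer phase of $w_{3,a}$, normalized so that $\phi_a(a)=-\pi_p/2$. Then $\phi_a$ satisfies
\[
\phi_a'(t) = \alpha - \frac{T_3(t)}{p-1}\,g(\phi_a(t)),\qquad g(\phi) := \cos_p^{p-1}(\phi)\sin_p(\phi),
\]
and $b(3,a)$ is the first $t>a$ at which $\phi_a(t)=\pi_p/2$, so $\delta(3,a)=b(3,a)-a$.

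Since $T_3$ is odd in $t$ and $g$ is odd in $\phi$ (as $\cos_p$ is even and $\sin_p$ is odd), the equation is invariant under $(t,\phi)\mapsto(-t,-\phi)$. Thus if $\phi_a(t)$ is a solution, so is $t\mapsto -\phi_a(-t)$, and matching initial conditions identifies it with $\phi_{-b(3,a)}$. Consequently $a\mapsto -b(3,a)$ is an involution $\Sigma$ on parameter space satisfying $b(3,\Sigma(a))=-a$ and hence $\delta(3,\Sigma(a))=\delta(3,a)$. By uniqueness in Proposition \ref{Kappa Negative Odd Solution}, the unique fixed point of $\Sigma$ is $a=-\bar a$, giving $\delta(3,-\bar a)=2\bar a$.

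To show $-\bar a$ is the unique minimum, I would differentiate $\delta$ in $a$. Setting $u_a(t):=\partial_a\phi_a(t)$ and differentiating both the ODE and the identity $\phi_a(a)=-\pi_p/2$ yields the linearized equation $u_a'=-\tfrac{T_3(t)}{p-1}g'(\phi_a)u_a$ with $u_a(a)=-\alpha$. Combined with $\phi_a'(b(3,a))=\alpha$ (since $g(\pm\pi_p/2)=0$) and implicit differentiation of $\phi_a(b(3,a))=\pi_p/2$, this gives
\[
\delta'(a) = \exp\!\Bigl(-\int_a^{b(3,a)} \frac{T_3(s)}{p-1}\, g'(\phi_a(s))\,ds\Bigr) - 1.
\]
Since $g'$ is even in $\phi$, the integrand at $a=-\bar a$ is odd in $s$ around $s=0$, the exponent vanishes, and $\delta'(-\bar a)=0$.

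The main obstacle is then the strict sign of $\delta'(a)$ away from $-\bar a$, i.e., that the integral in the exponent is strictly positive for $a>-\bar a$ and strictly negative for $a<-\bar a$. The plan is to reparameterize $s=t_0(a)+\tau$, where $t_0(a)$ is the unique time in $(a,b(3,a))$ with $\phi_a(t_0)=0$, and compare the shifted integrand with its reflection $\tau\mapsto -\tau$. The strict monotonicity of $T_3$ forces $T_3(t_0+\tau)+T_3(t_0-\tau)$ to carry the sign opposite of $t_0$, while the deviation of $\tau\mapsto \phi_a(t_0+\tau)+\phi_a(t_0-\tau)$ from zero (which vanishes identically only at the fixed-point case $a=-\bar a$) contributes a residual term that must be controlled. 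Making this quantitative in the spirit of the estimates of Lemmas 6.5--6.7 of \cite{NV14}—equivalently, propagating the odd-symmetry ansatz along the linearized flow—pins down the sign and yields the strict inequality $\delta(3,a)>2\bar a$ for $a\neq -\bar a$.
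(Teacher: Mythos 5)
The paper itself gives no proof of this proposition --- it simply quotes \cite{NV14}, Proposition 8.4 --- so your argument must stand on its own, and as written it does not yet prove the assertion. What you do carry out is correct and cleanly done: the reflection $(t,\phi)\mapsto(-t,-\phi)$ of the phase equation (using that $T_3$ is odd and $g(\phi)=\cos_p^{p-1}(\phi)\sin_p(\phi)$ is odd) produces the involution $\Sigma(a)=-b(3,a)$ with $\delta(3,\Sigma(a))=\delta(3,a)$, whose unique fixed point is $a=-\bar a$ by the uniqueness of the odd solution (Proposition \ref{Kappa Negative Odd Solution}); this gives the equality $\delta(3,-\bar a)=2\bar a$. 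The first-variation formula is also right: $b'(a)=\exp\bigl(-\int_a^{b(3,a)}\tfrac{T_3(s)}{p-1}\,g'(\phi_a(s))\,ds\bigr)$ with $g'(\phi)=|\cos_p(\phi)|^p-(p-1)|\sin_p(\phi)|^p$, and the oddness of the integrand at $a=-\bar a$ gives $\delta'(-\bar a)=0$.

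But the proposition is the (strict) minimality, and that is precisely the step you leave as a plan, and the plan as described does not close. Granting that $T_3(t_0+\tau)+T_3(t_0-\tau)$ carries the sign opposite to $t_0$ (true, since $T_3$ is odd and strictly decreasing), the other factor $g'(\phi_a)$ is not sign-definite: it is positive near $\phi=0$ and negative near $\phi=\pm\pi_p/2$, so even in the idealized situation where $\phi_a$ were exactly odd about its zero $t_0$, the symmetrized integrand has no pointwise sign and the sign of the exponent does not follow; and the deviation of $\phi_a$ from oddness about $t_0$ --- your ``residual term'' --- is exactly the nontrivial content of the statement, for which no estimate is given (the results quoted in Section 6 do not supply one). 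There is also a domain issue you do not address: when $\alpha\le\bar\alpha$ one has $\delta(3,a)=\infty$ for all sufficiently large $a$ (the lemma quoted from \cite{NV14}, Proposition 6.4), so $\delta(3,\cdot)$ is not finite, let alone differentiable, on all of $\mathbb{R}$; such $a$ satisfy the inequality trivially, but to deduce global minimality from a sign of $\delta'$ you must also control the structure of the set where $\delta$ is finite (its connectedness and the behavior of $\delta$ at its boundary). In short, you have proved the equality case, the symmetry $\delta(3,-b(3,a))=\delta(3,a)$, and that $-\bar a$ is a critical point, but the inequality $\delta(3,a)\ge 2\bar a$ and its strictness for $a\ne-\bar a$ --- the actual assertion --- remain unproved.
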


It is also easy to see from the ODE for $\phi$ when $i=3$ that, $\phi'>\alpha$. Therefore $\delta(3,-\bar{a})<\frac{\pi_p}{\alpha}$. Also from this we have $\delta(3,-\bar{a})$ is strictly decreasing function of $\alpha$, so as to $\lambda_D$. This means that $\bar{\delta}(\lambda)$ is a strictly decreasing function. Thus if we see $\lambda$ as a function of $\delta$, we also have the monotonicity: if $\delta_1\leq\delta_2$, we have
$$\lambda(\delta_1)\geq\lambda(\delta_2).$$

\section{Maximum of Eigenfunctions}

In this section we are going to compare the maximum of the eigenfunctions $u$ and the model functions $w$. First, we define a measure on the interval $[a,b(a)]$ which is essentially the pullback of the volume measure on $M$ by $w^{-1}\circ u$.  By the ODE satisfied by $w$, $w''$ is positive before $w$ hits its first zero.

First we have a theorem which can be seen as a comparison between the model function and the eigenfunction.

\begin{theorem}(Theorem 34 \cite{NV14})
Let $u$ and $w$ be as above and define
$$E(s):=-\exp{\bigg(\int_{t_0}^s\frac{w^{(p-1)}}{w'^{(p-1)}}dt\bigg)}\int_a^sw^{(p-1)}d\mu$$
then $E$ is increasing on $(a,t_0]$ and decreasing on $[t_0,b)$.

\end{theorem}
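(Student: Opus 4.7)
The plan is to compute $E'(s)$ directly via the product rule and show that the resulting expression has a definite sign on each of $(a,t_0]$ and $[t_0,b)$. Writing $V(s):=\int_a^s w^{(p-1)}\,d\mu$, so that $E(s) = -e^{g(s)}V(s)$ with $g(s)$ the integral in the exponent, differentiation yields
\[
E'(s) \;=\; -\,e^{g(s)}\,\frac{w(s)^{(p-1)}}{(w'(s))^{(p-1)}}\,\Bigl[\,V(s) + (w'(s))^{(p-1)}\mu'(s)\,\Bigr],
\]
where $\mu'$ denotes the Radon--Nikodym density of $\mu$ with respect to Lebesgue measure. Since $w$ is strictly increasing on $(a,b)$ and vanishes only at $t_0$, the prefactor $w^{(p-1)}/(w')^{(p-1)}$ is negative on $(a,t_0)$ and positive on $(t_0,b)$, so the monotonicity of $E$ reduces to showing that the expression in brackets is non-negative throughout $(a,b)$; the exponential integrating factor has been chosen precisely so that this specific combination appears.

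To control that expression I would combine two identities on the level sets $\Sigma_s := \{u = w(s)\}$ with the gradient-comparison theorem. From the coarea formula and the pushforward description of $\mu$,
\[
\mu'(s) \;=\; w'(s)\int_{\Sigma_s} \Gamma(u)^{-1/2}\,d\sigma.
\]
Integrating the eigenfunction equation $L_p u = -\lambda u^{(p-1)}$ over $A_s := \{u\le w(s)\}$ and applying the divergence theorem (the Neumann condition on $\partial M$ eliminates any contribution from $\partial M\cap \partial A_s$) yields
\[
\int_{\Sigma_s} \Gamma(u)^{(p-1)/2}\,d\sigma \;=\; -\,\lambda\, V(s).
\]
The gradient-comparison bound $\Gamma(u)^{1/2}\le w'(s)$ on $\Sigma_s$ (Theorem \ref{Gradient Comparison}), applied via the factorization $\Gamma(u)^{(p-1)/2} = \Gamma(u)^{p/2}\cdot\Gamma(u)^{-1/2}$ together with $\Gamma(u)^{p/2}\le (w'(s))^p$, converts the second identity into $\lambda V(s) + (w'(s))^{(p-1)}\mu'(s) \ge 0$. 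Combined with the elementary observation that $V\le 0$ on $[a,b]$---which follows from $V' = w^{(p-1)}\mu'$ having the sign of $w$ and from $V(a) = V(b) = 0$ (the latter coming from $\int_M u^{(p-1)}\,dm = 0$, obtained by integrating the eigenfunction equation over $M$ with the Neumann BC)---this produces the non-negativity of the bracket that matches the integrating factor in $E$, and hence the required sign of $E'(s)$ on each of the two intervals.

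The main obstacle I expect is the regularity of $\mu$ and $u$. The pushforward $\mu$ need not be absolutely continuous, so $\mu'$ has to be understood distributionally, and the level-set identities must be justified by approximating $\chi_{A_s}$ by smooth cutoffs in the spirit of the integration-by-parts identity in Lemma 3.1. For $1 < p < 2$, $u$ is only $C^{2,\alpha}$ near $\{\Gamma(u) = 0\}$, so sublevel sets passing through critical values will require the same limiting procedure employed in Remark 5.2. A secondary technical point is confirming that only the level-set portion $\Sigma_s$ of $\partial A_s$ contributes to the divergence-theorem formula; this reduces directly to the Neumann condition $\Gamma(u,\tilde\nu) = 0$ on $\partial M$.
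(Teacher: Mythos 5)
Your strategy is the right one, and it is essentially the argument of Naber--Valtorta that the paper simply cites (no proof is given here): integrate $L_pu=-\lambda u^{(p-1)}$ over the sublevel set $\{u\le w(s)\}$ to get $\int_{\Sigma_s}\Gamma(u)^{(p-1)/2}\,d\sigma=-\lambda V(s)$, use the coarea formula for the density of $\mu$, and insert the gradient comparison $\Gamma(u)^{1/2}\le w'(s)$ on $\Sigma_s$ through the splitting $\Gamma(u)^{(p-1)/2}=\Gamma(u)^{p/2}\,\Gamma(u)^{-1/2}$; combined with $V\le 0$ (which you justify correctly, via $V(a)=V(b)=0$ and the sign of $V'$, or directly from the sublevel-set identity), this is exactly how the monotonicity is obtained, and the regularity and critical-value issues you flag at the end are the genuine technical points and are handled by the same limiting arguments as in the cited source.

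There is, however, a real mismatch at the closing step. With the exponent $g(s)=\int_{t_0}^s w^{(p-1)}/(w')^{(p-1)}\,dt$ as displayed in the statement, your own differentiation produces the bracket $V(s)+(w'(s))^{(p-1)}\mu'(s)$, whereas the sublevel-set estimate yields $\lambda V(s)+(w'(s))^{(p-1)}\mu'(s)\ge 0$. Since $V\le 0$, the second inequality implies the first only when $\lambda\ge 1$, so the sentence claiming that the integrating factor ``has been chosen precisely so that this combination appears'' is not correct for the $E$ written above, and the passage from one bracket to the other is an unproved (and in general false) step. Indeed, for the one-dimensional model itself one has the exact identity $(w')^{(p-1)}\mu_{\mathrm{mod}}(s)=-\lambda\int_a^s w^{(p-1)}\mu_{\mathrm{mod}}\,dt$, so the bracket equals $(1-\lambda)V_{\mathrm{mod}}\le 0$, and for $\lambda<1$ the stated $E$ is strictly decreasing on $(a,t_0)$: the theorem as transcribed is false without a factor $\lambda$ in the exponential. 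The correct statement (and the one in NV14, which is also forced by the claimed equivalence with the ratio form quoted right after, since only then is $-e^{g}V_{\mathrm{mod}}$ constant) is
\begin{equation*}
E(s)=-\exp\!\left(\int_{t_0}^s \frac{\lambda\, w^{(p-1)}}{(w')^{(p-1)}}\,dt\right)\int_a^s w^{(p-1)}\,d\mu ,
\end{equation*}
and with this exponent your computation gives exactly the bracket $\lambda V+(w')^{(p-1)}\mu'$, after which your argument closes as intended. So the gap is localized: either correct the integrating factor, or your proof only covers $\lambda\ge1$.
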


This result is equivalent to the following statement:

\begin{theorem}(Theorem 35,\cite{NV14})
Under the hypothesis of Theorem 6.1 the function
$$E(s):=\frac{\int_a^sw^{(p-1)}d\mu}{\int_a^sw^{(p-1)}t^{n-1}dt}=\frac{\int_{u\leq w(s)}u^{(p-1)}dm}{\int_a^sw^{(p-1)}t^{n-1}dt}$$
is increasing on $(a,t_0]$ and decreasing on $[t_0,b)$.
\end{theorem}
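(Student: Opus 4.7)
The plan is to compute $E'(s)$ explicitly and show its sign is opposite to that of $w^{(p-1)}(s)$, which vanishes and changes sign at $s=t_0$. Write $N(s):=\int_a^s w^{(p-1)}\,d\mu=\int_{\{u\leq w(s)\}} u^{(p-1)}\,dm$ and $D(s):=\int_a^s w^{(p-1)}(t)\,\mu_w(t)\,dt$, where $\mu_w(t)=\tau_i^{n-1}(t)$ is the model weight from section 6 (so that the ODE (\ref{Model Equation}) rewrites in divergence form as $\frac{d}{dt}[\mu_w(w')^{(p-1)}]=-\lambda\mu_w w^{(p-1)}$; the ``$t^{n-1}$'' in the statement should be read as $\mu_w(t)$). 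Integrating the divergence form from $a$ to $s$ using $w'(a)=0$ gives
$$D(s)=-\frac{\mu_w(s)\,(w'(s))^{p-1}}{\lambda},$$
and applying the $L$-invariance of $m$ to $L_p u=-\lambda u^{(p-1)}$ on the sublevel set $\{u\leq w(s)\}$ (the Neumann condition kills the boundary term) yields
$$N(s)=-\frac{1}{\lambda}\int_{\{u=w(s)\}}\Gamma(u)^{(p-1)/2}\,d\mathcal{H}^{n-1}.$$

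Next I differentiate $E=N/D$. Because $\mu$ is the pushforward of $m$ under $w^{-1}\circ u$, we have $\mu([a,s])=m(\{u\leq w(s)\})$, and the coarea formula identifies its density as
$$\eta(s):=\frac{d\mu}{ds}(s)=w'(s)\int_{\{u=w(s)\}}\Gamma(u)^{-1/2}\,d\mathcal{H}^{n-1}.$$
Hence $N'(s)=w^{(p-1)}(s)\eta(s)$ and $D'(s)=w^{(p-1)}(s)\mu_w(s)$, so
$$E'(s)=\frac{w^{(p-1)}(s)\,\big[\eta(s)D(s)-\mu_w(s)N(s)\big]}{D(s)^2}.$$
Substituting the closed forms of $N$ and $D$ above and factoring out $\mu_w(s)/\lambda$,
$$\eta(s)D(s)-\mu_w(s)N(s)=\frac{\mu_w(s)}{\lambda}\int_{\{u=w(s)\}}\Gamma(u)^{-1/2}\Big[\Gamma(u)^{p/2}-(w'(s))^p\Big]\,d\mathcal{H}^{n-1}.$$
By the gradient comparison Theorem \ref{Gradient Comparison}, $\Gamma(u)^{1/2}(x)\leq w'(w^{-1}(u(x)))$, so on the level set $\{u=w(s)\}$ the bracket is non-positive. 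Since $\mu_w(s)>0$ and $\lambda>0$, we conclude $\eta D-\mu_w N\leq 0$, and therefore $E'(s)$ has the opposite sign to $w^{(p-1)}(s)$: nonnegative on $(a,t_0]$ (where $w<0$) and nonpositive on $[t_0,b)$ (where $w>0$), which is the stated monotonicity.

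The main obstacle is regularity. Since $u$ is only globally $C^{1,\alpha}$ and the level sets $\{u=w(s)\}$ may intersect the critical set $\{\Gamma(u)=0\}$, both the coarea identity for $\eta$ and the divergence-type reduction of $N$ need careful justification. I would handle this by first restricting to regular values $c$ of $u$ (generic by Sard's theorem), establishing the identities for $c=w(s)$ with $s$ in the corresponding set, and then extending to all $s\in(a,b)$ by continuity of $N$ and $D$, exactly as in \cite{NV14}. A small technical point: when replacing the characteristic function $\chi_{\{u\leq w(s)\}}$ by a smooth cutoff in the $L$-invariance identity, one must verify that the limiting boundary contribution is indeed $\int_{\{u=w(s)\}}\Gamma(u)^{(p-1)/2}\,d\mathcal{H}^{n-1}$; this follows from the coarea formula again. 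Finally, at $s=t_0$ the factor $w^{(p-1)}(t_0)=0$ forces $E'(t_0)=0$, so the two one-sided monotonicity statements join continuously at $t_0$, giving $C^1$ behaviour of $E$ at the turning point.
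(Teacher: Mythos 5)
Your proof is correct and is in substance the argument of \cite{NV14}: the paper itself gives no proof of this statement (it is quoted as Theorem 35 of \cite{NV14}), and your quotient-rule computation is precisely the ``ratio'' form of the integrating-factor statement recorded here as Theorem 7.1, so you have reconstructed the source argument rather than found a new one. The one substantive point is your reinterpretation of the denominator, and you are right to insist on it: the closed form $D(s)=-\mu_w(s)(w'(s))^{p-1}/\lambda$, which comes from the divergence form $\frac{d}{dt}\bigl[\mu_w\,(w')^{(p-1)}\bigr]=-\lambda\,\mu_w\,w^{(p-1)}$ and is exactly what makes $\eta D-\mu_w N\leq 0$ after applying the gradient comparison on the level set, holds only when the weight is the model weight $\mu_w=\tau_i^{n-1}$; with the literal $t^{n-1}$ of the statement the argument does not close (that weight is carried over from the $\kappa=0$ cone model, and in the only place the theorem is used, Theorem 7.4 with $w=w_{1,0}$ and $a=0$, one has $\mu_w(t)\sim(\sqrt{-\kappa}\,t)^{n-1}$ near $0$, so the $r^n$ volume bound is unaffected by the distinction). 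On the technical caveats you flag: Sard's theorem is not directly available for a function that is only $C^{1,\alpha}$ across its critical set, so the rigorous route is the coarea formula for Lipschitz functions together with a.e.\ differentiability of $N$ and $D$ and continuity of $E$ (note that possible atoms of $\mu$, coming from critical level sets of positive measure, only push $E$ in the claimed direction); since you defer these points to \cite{NV14} exactly as the paper does, the proposal stands.
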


To prove the maximum comparison we study the volume of a small ball around the minimum of $u$. By the gradient comparison we have the following:

\begin{lemma}
For $\epsilon$ sufficiently small, the set $u^{-1}[-1,-1+\epsilon)$ contains a ball of radius $w^{-1}(-1+\epsilon)-a$. 
\end{lemma}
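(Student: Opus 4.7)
The plan is to use the gradient comparison theorem (Theorem~\ref{Gradient Comparison}) as the sole analytic input, converting $w^{-1}\circ u$ into a $1$-Lipschitz function with respect to the intrinsic distance, and then pull back a Euclidean-type ball estimate through $w$.

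First, since $M$ is compact and $u$ is continuous, $u$ attains its minimum at some point $x_0\in M$, with $u(x_0)=-1$. Set $f:=w^{-1}\circ u$, which is well-defined because, by hypothesis in Theorem~\ref{Gradient Comparison}, the range of $u$ is contained in the range of $w$, and because $w$ is strictly increasing on $[a,b(a))$ (we may take $\eps$ small so that $-1+\eps$ lies below the first critical value of $w$). Note $f(x_0)=w^{-1}(-1)=a$.

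Next, by Theorem~\ref{Gradient Comparison}, $\Gamma(f)\leq 1$ pointwise on $M$. The intrinsic distance is defined precisely as a supremum over smooth functions with $\Gamma(\cdot)\leq 1$, so $f$ is $1$-Lipschitz with respect to $d$: for every $y\in M$,
\begin{equation*}
f(y)-f(x_0)\leq d(y,x_0).
\end{equation*}
(There is a minor regularity point: the composition $f=w^{-1}\circ u$ is only as smooth as the gradient comparison requires, but the comparison was already stated pointwise for $\Gamma(w^{-1}(u(\cdot)))$, so this is exactly the input we need. Near $u=-1$ the derivative of $w^{-1}$ blows up, but we never Lipschitz-bound $w^{-1}$ alone.)

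Now fix $r:=w^{-1}(-1+\eps)-a$ and let $y$ be any point with $d(y,x_0)<r$. The Lipschitz bound gives $f(y)<a+r=w^{-1}(-1+\eps)$. Since $w$ is strictly increasing on $[a,b(a))$ we may apply it to obtain $u(y)=w(f(y))<-1+\eps$, and $u(y)\geq -1$ trivially. Hence the intrinsic ball of radius $r$ about $x_0$ sits inside $u^{-1}[-1,-1+\eps)$, as claimed.

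The only real obstacle is the singular behavior of $w^{-1}$ at the endpoint $-1$, where $w'(a)=0$; this is handled cleanly by phrasing the Lipschitz property in terms of $f$ itself (for which $\Gamma(f)\leq 1$ globally) rather than attempting to bound $w^{-1}$, so the proof is essentially a one-line application of the gradient comparison once the correct 1-Lipschitz function is identified.
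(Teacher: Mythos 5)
Your proof is correct and is essentially the paper's own argument: the paper states this lemma as an immediate consequence of the gradient comparison, i.e.\ $\Gamma(w^{-1}\circ u)\leq 1$ makes $w^{-1}\circ u$ $1$-Lipschitz for the intrinsic distance, so the ball of radius $w^{-1}(-1+\epsilon)-a$ about a minimum point $x_0$ lands in the sublevel set $\{u<-1+\epsilon\}$. Your handling of the endpoint degeneracy (bounding the composition rather than $w^{-1}$ itself) is exactly the right way to phrase it, so no gap to report.
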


Now we can prove the maximum comparison, by combining Bishop-Gromov and the following:

\begin{theorem}
Let $n\geq N$ and $n>1$. If $u$ is an eigenfunction satisfying $\min u=-1=u(x_0)$ and $\max u\leq m(1,0)=w_{1,0}(b(1,0))$, then there exists a constant $c>0$ such that for all $r$ sufficiently small, we have 
$$m(B_{x_0}(r))\leq cr^n.$$
\end{theorem}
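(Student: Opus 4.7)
The plan is to combine the lemma immediately preceding this theorem (a sublevel set of $u$ contains a ball) with the monotonicity of the quantity $E$ from Theorem 7.2, applied to the specific model $w:=w_{1,0}$ supplied by Proposition 6.1. That is, $a=0$, $w(0)=-1$, $w'(0)=0$, and $\max w=m(1,0)\geq\max u$ by hypothesis, so the range of $u$ is contained in the range of $w$ and the gradient comparison (Theorem 5.1) applies. The preceding lemma then gives, for every small enough $\eps>0$,
$$B_{x_0}(r)\subset u^{-1}\bigl([-1,-1+\eps)\bigr),\qquad r:=w^{-1}(-1+\eps).$$
Because $T_1(t)\sim-(n-1)/t$ as $t\to0^+$, the model equation degenerates there to the Euclidean radial $p$-Laplacian, yielding $w(t)+1\sim C\,t^{p/(p-1)}$; in particular $r\to0$ as $\eps\to0$, so restricting attention to small $r$ is consistent.

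Let $t_0\in(0,b(1,0))$ be the unique zero of $w$. Theorem 7.2 says
$$E(s)=\frac{\int_{\{u\leq w(s)\}}u^{(p-1)}\,dm}{\int_0^s w^{(p-1)}\,t^{n-1}\,dt}$$
is increasing on $(0,t_0]$, so $E(s)\leq E(t_0)=:C_1$ for every $s\in(0,t_0]$. On this range both integrals are strictly negative (since $w(s)<0$ forces $u\leq w(s)$ to imply $u<0$, and $w^{(p-1)}<0$ on $[0,s]$), so the bound rearranges to
$$\int_{\{u\leq w(s)\}}|u|^{p-1}\,dm\;\leq\;C_1\int_0^s|w|^{p-1}\,t^{n-1}\,dt.$$

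Specialize $s=r$, so $w(s)=-1+\eps$. On the left, $|u|^{p-1}\geq(1-\eps)^{p-1}$ on $u^{-1}[-1,-1+\eps]$, giving the lower bound $(1-\eps)^{p-1}m(u^{-1}[-1,-1+\eps])$. On the right, $|w|^{p-1}\leq1$ on $[0,r]$, giving the upper bound $C_1 r^n/n$. Combined with the ball containment of the first step,
$$m(B_{x_0}(r))\;\leq\;\frac{C_1}{n(1-\eps)^{p-1}}\,r^n,$$
which is of the desired form as soon as $\eps$ is small enough that $(1-\eps)^{p-1}\geq1/2$, e.g.\ with $c=2C_1/n$. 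The main obstacle is purely preparatory: one needs Theorem 7.2 to supply a genuine finite, $r$-independent constant $C_1=E(t_0)$, which rests on the global ODE analysis of $w_{1,0}$ on $[0,b(1,0)]$. Once $C_1$ is in hand, the remaining estimates are elementary.
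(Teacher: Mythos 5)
Your argument is correct and follows essentially the same route as the paper: contain $B_{x_0}(r_\epsilon)$ in the sublevel set $u^{-1}[-1,-1+\epsilon)$ via the preceding lemma, bound $|u|^{p-1}$ from below there, invoke the monotonicity $E(s)\leq E(t_0)$ from Theorem 7.2 for the model $w_{1,0}$, and bound $\int_0^{r_\epsilon}|w|^{p-1}t^{n-1}\,dt\leq r_\epsilon^n/n$, which is exactly the paper's chain of estimates (the paper fixes $\epsilon$ with $-1+\epsilon<-2^{-p+1}$ where you use $(1-\epsilon)^{p-1}\geq 1/2$, an immaterial difference). Your extra remark on the asymptotics $w(t)+1\sim Ct^{p/(p-1)}$ is not needed, since continuity and monotonicity of $w$ already give that every sufficiently small $r$ arises as $r_\epsilon$.
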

\begin{proof}
To keep notations short, let $w=w_{1,0}$. Let $\epsilon$ be small such that $-1+\epsilon<-2^{-p+1}$. Then we have $u^{(p-1)}<-\frac{1}{2}$ when $u<-1+\epsilon$. Let $t_0$ be the first zero of $w$, then by Theorem 6.1 we have $E(t)\leq E(t_0)$. Therefore by Theorem 6.2 we get
$$m(B_{x_0}(r_\epsilon))\leq C\int_{u\leq -1+\epsilon}u^{(p-1)}dm\leq CE(t_0)\int_a^{w^{-1}(-1+\epsilon)}w^{(p-1)}t^{n-1}dt\leq C'r_\epsilon^n$$
Since $\epsilon$ can be arbitrarily small, we have the claim holds for $r$ sufficiently small.
\end{proof}

\begin{corollary}
Let $n\geq N>1$, and $u$ is an eigenfunction of $L_p$ with $\min u = -1$ We have the following maximum comparison result:
\begin{itemize}[leftmargin=0.3in]
    \item[(1)] If $\kappa>0$, and $w_{0,-\pi/(2\sqrt{\kappa})}$ is the corresponding eigenfunction. Then $\max u \geq m(0,-\pi/$ $(2\sqrt{\kappa}))$.
    \item[(2)] If $\kappa<0$, and $w_{1,\hat{a}}$ is the corresponding eigenfunction. Then $\max u \geq m(1,0)$.
\end{itemize}  
\end{corollary}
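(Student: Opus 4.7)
The plan is to prove both cases by contradiction, deriving conflicting bounds on the measure $m(B_{x_0}(r))$ of small balls around a point $x_0$ where $u(x_0)=\min u=-1$. I would handle case (2) first, since it fits the preceding theorem directly; case (1) then follows by the same mechanism after establishing the analogous volume upper bound.

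For case (2), suppose toward a contradiction that $\kappa<0$ and $\max u<m(1,0)$. Since $w_{1,0}$ is strictly increasing on $[0,b(1,0)]$ with range $[-1,m(1,0)]$, the range of $u$ is strictly contained in the range of $w_{1,0}$, so Theorem~\ref{Gradient Comparison} applies with this model solution, and consequently the hypotheses of the preceding volume theorem are satisfied. Because $BE(\kappa,N)$ implies $BE(\kappa,n)$ for every $n\geq N$, I would choose a strict $n>N$; the preceding theorem then yields a constant $c>0$ with
$$m(B_{x_0}(r))\leq c\, r^{n}$$
for all $r$ sufficiently small.

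Next I would combine this with the local Bishop--Gromov type lower bound $m(B_{x_0}(r))\geq c'\, r^{N}$, which the curvature-dimension condition $BE(\kappa,N)$ furnishes near any interior point. Dividing the two inequalities gives $r^{n-N}\geq c'/c>0$ as $r\to 0^{+}$, which is impossible since $n>N$. This contradiction forces $\max u\geq m(1,0)$. Case (1) is handled by the identical scheme once one establishes the analogue of the preceding volume theorem using $w_{0,-\pi/(2\sqrt{\kappa})}$ in place of $w_{1,0}$; that analogue should go through by the same proof, since $w_{0,-\pi/(2\sqrt{\kappa})}$ is strictly increasing up to its maximum $m(0,-\pi/(2\sqrt{\kappa}))$ and its behaviour at the singular initial point is controlled by Lemma~\ref{Continuity of Maximum}.

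The main obstacle I anticipate is not the contradiction mechanism itself but rather the verification of the short-range volume lower bound $m(B_{x_0}(r))\geq c'\, r^{N}$ in the abstract smooth metric measure space setting. On a Riemannian manifold this is classical Bishop--Gromov volume comparison, but for a general elliptic diffusion operator $L$ only known to satisfy $BE(\kappa,N)$, one must either invoke the equivalence with the $CD(\kappa,N)$ condition and Sturm--Lott--Villani theory, or extract the bound directly from short-time heat-kernel asymptotics of $L$.
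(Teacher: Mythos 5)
Your contradiction mechanism (volume upper bound $m(B_{x_0}(r))\leq c\,r^{n}$ from the preceding theorem versus a Bishop--Gromov type lower bound $m(B_{x_0}(r))\geq C r^{N}$) is exactly the paper's mechanism, but there is a genuine gap in how you handle the dimension parameter. The model $w_{1,0}$, and hence the threshold $m(1,0)$, depends on $n$ (the coefficient in $T_1$ is $(n-1)\sqrt{-\kappa}\coth(\sqrt{-\kappa}t)$), and the corollary must hold for the given $n\geq N$, including $n=N$. You cannot simply ``choose a strict $n>N$'': if $n>N$ is already the given parameter there is nothing to choose, and if $n=N$ then replacing it by a larger value changes the model and therefore changes the statement --- the hypothesis of the volume theorem with parameter $n'$ is $\max u\leq m_{n'}(1,0)$, which does not follow from your contradiction assumption $\max u<m_{N}(1,0)$. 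Moreover, with $n=N$ the two bounds $c\,r^{N}$ and $C r^{N}$ have the same exponent and give no contradiction at all. The paper closes precisely this gap: since the contradiction hypothesis is a strict inequality, the continuous dependence of the model solution (hence of its maximum) on the dimension parameter lets one find $n'>n\geq N$ close to $n$ for which $\max u$ is still below the maximum of the corresponding model; the volume theorem then applies with exponent $n'>N$ and the comparison with $C r^{N}$ is contradictory as $r\to 0$. Your write-up omits this perturbation/continuity-in-$n$ step, which is the one nontrivial idea in the proof; alternatively you could prove the bound for every $n'>N$ and pass to the limit $n'\downarrow N$, but that again requires the same continuity of $m_{n'}(1,0)$ in $n'$, which you never invoke.

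Two smaller remarks. First, your treatment of case (1) is consistent with the paper, which indeed applies the volume theorem with the model $w_{0,-\pi/(2\sqrt{\kappa})}$ without spelling out the analogue; flagging the singular endpoint and Lemma~\ref{Continuity of Maximum} is reasonable. Second, your caution about justifying the lower bound $m(B_{x_0}(r))\geq C r^{N}$ in the abstract $BE(\kappa,N)$ setting is fair --- the paper simply cites Bishop--Gromov --- but that is a background issue, not the missing step above.
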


\begin{proof}
Let $m$ denotes $m(0,-\pi/(2\sqrt{\kappa}))$ or $m(1,0)$ in either cases, and suppose that $\max u<m$. Since $m$ is the least possible value among $\max w$ for all model solutions $w$, by continuous dependence of the solution of model equation on $n$, we can find $n'>n$ so that $\max u$ is still less that the maximum of the correspoding model equation. Since $BE(\kappa,n')$ is still satisfied, we have by Theorem 6.3, that $m(B_{x_0}(r))\leq cr^{n'}$ for $r$ sufficiently small. However by Bishop-Gromov volume comparison we have $m(B_{x_0}(r))\geq Cr^N$. This is a contradiction since $n'>n\geq N$.
\end{proof}
\section{Proof of Theorem 1.1}
Now we can combine the gradient and maximum comparison, together with properties of the model equation to show the Theorem \ref{Main Theorem}.

\begin{theorem}
Let $M$ be compact and connected and $L$ be an elliptic diffusion operator with invariant measure $m$. Assume that $L$ satisfies $BE(\kappa,N)$ where $\kappa\neq 0$. Let $D$ be diameter defined by the intrinsic distance metric on $M$. Let $u$ be an eigenfunction associated with $\lambda$
satisfying Neumann boundary condition if $\p M \neq \emptyset$, where $\lambda$ is the first nonzero eigenvalue of $L_p$. Then denoting $w^{(p-1)} := |w|^{p-2}w$, we have
\begin{itemize}[leftmargin=0.5in]
    \item[(1)] When $\kappa>0$, assuming further that $D\leq \pi/\sqrt{\kappa}$, we have a sharp comparison:
    $$\lambda\geq \lambda_D$$
    where $\lambda_D$ is the first nonzero eigenvalue of the following Neumann eigenvalue problem on $[-D/2,D/2]$:
    $$\frac{d}{dt}\big[(w')^{(p-1)}\big]-(n-1)\sqrt{\kappa}\tan(\sqrt{\kappa} t)(w')^{(p-1)}+\lambda w^{(p-1)}=0$$
    \item[(2)] When $\kappa<0$, we have a sharp comparison:
    $$\lambda\geq \lambda_D$$
    where $\lambda_D$ is the first nonzero eigenvalue of the following Neumann eigenvalue problem on $[-D/2,D/2]$:
    $$\frac{d}{dt}\big[(w')^{(p-1)}\big]+(n-1)\sqrt{-\kappa}\tanh(\sqrt{-\kappa t})(w')^{(p-1)}+\lambda w^{(p-1)}=0$$
\end{itemize}
\end{theorem}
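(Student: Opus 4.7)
The plan is to combine three comparison ingredients already developed --- the gradient comparison (Theorem 5.1), the maximum/existence comparison (Corollary 7.4 together with the existence results of Section 6.4), and the diameter monotonicity of the one-dimensional model (Section 6.5) --- into a single chain of inequalities forcing $D$ to be at least the critical half-period associated to $\lambda$.

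The first step is to normalize the eigenfunction $u$ so that $\min u = -1$ and $\max u \le 1$, and to pick points $x_{\pm}\in M$ where $u$ attains its extrema. Corollary 7.4 gives $\max u \ge m(0,-\pi/(2\sqrt{\kappa}))$ when $\kappa>0$ and $\max u\ge m(1,0)$ when $\kappa<0$. Combined with Theorem 6.1 and the existence lemmas of Section 6.4 (covering all sub-cases $\alpha \gtrless \bar{\alpha}$ via the models $T_0$ or $T_1,T_2,T_3$), this yields an index $i$ and a parameter $a$ for which the model solution $w := w_{i,a}$ of the ODE (6.1) is strictly increasing on $[a,b(i,a)]$ and satisfies $m(i,a) = \max u$.

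Next I apply Theorem 5.1 to $f := w^{-1}\circ u$, which is well-defined because the range of $u$ lies in the range of $w$. The conclusion $\Gamma(f) \le 1$ on $M$ makes $f$ admissible in the definition of the intrinsic distance, so
$$D \;\ge\; d(x_+, x_-) \;\ge\; f(x_+)-f(x_-) \;=\; b(i,a)-a \;=\; \delta(i,a).$$
The diameter lower bounds of Section 6.5 then give $\delta(i,a) \ge 2\bar{a}(\lambda)$, where $2\bar{a}(\lambda)$ is the half-period of the critical odd model solution ($w_{0,-\bar{a}}$ for $\kappa>0$, $w_{3,-\bar{a}}$ for $\kappa<0$) furnished by Propositions 6.2 and 6.3. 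Since $\lambda_D$ is by construction the eigenvalue whose critical odd solution has half-period exactly $D/2$, the strict monotonicity of $\bar{\delta}(\lambda)$ as a decreasing function of $\lambda$ (equivalently, of $\lambda$ as a decreasing function of diameter) immediately converts $D\ge 2\bar{a}(\lambda)$ into $\lambda\ge\lambda_D$.

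The main technical obstacle I expect is bookkeeping in the $\kappa<0$ case, where the choice of model among $T_1,T_2,T_3$ depends on whether the shape parameter $\alpha := (\lambda/(p-1))^{1/p}$ sits above or below the threshold $\bar{\alpha}$ identified in the oscillation lemmas of Section 6.4. When $\alpha\le\bar{\alpha}$ only $T_3$ produces a full-range comparison solution, whereas when $\alpha>\bar{\alpha}$ any of $T_1,T_2,T_3$ may be required; in each situation one must verify that the bound $\delta(i,a)\ge 2\bar{a}(\lambda)$ holds (strictly unless $i=3$ and $a=-\bar{a}$, by the $T_3$ version of the diameter lower bound) so that the reduction to the critical odd solution is uniform. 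For $\kappa>0$ the hypothesis $D\le\pi/\sqrt{\kappa}$ ensures that $f(x_+)-f(x_-)$ stays inside $I_0$, so no additional case analysis is needed there.
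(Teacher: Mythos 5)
Your proposal follows essentially the same route as the paper's proof: normalize $u$, use the existence/maximum comparison results (Theorem 6.1 and Corollary 7.4) to produce a model $w_{i,a}$ with $m(i,a)=\max u$, apply the gradient comparison to $w_{i,a}^{-1}\circ u$ to get $D\geq \delta(i,a)$, reduce to the critical odd solution via the diameter bounds of Section 6.5, and conclude $\lambda\geq\lambda_D$ by the monotonicity of the model eigenvalue in the diameter. The only point you leave untouched is the sharpness assertion in the statement, which the paper settles by exhibiting the warped products $[-D/2,D/2]\times_{i^{-1}\tau_3}S^{n-1}$ (with $\tau_3(t)=\cosh(\sqrt{-\kappa}\,t)$) whose first eigenvalues converge to $\lambda_D$ when $\kappa<0$, and the round sphere model when $\kappa>0$.
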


\begin{proof}
We scale $u$ so that $\min u=-1$ and $\max u\leq 1$. By Proposition \ref{Existence of Exact Comparison} we can find a model function $w_{i,a}$ such that $\max u=\max w_{i,a}$. By the gradient comparison theorem, $\Gamma(w_{i,a}^{-1}\circ u)\leq 1$.  Let $x$ and $y$ on $M$ be points where $u$ attains maximum and minimum, then we have
$$D\geq |w_{i,a}^{-1}\circ u(x)-w_{i,a}^{-1}\circ u(y)|=w_{i,a}^{-1}(m(i,a))-w_{i,a}^{-1}(-1)=\delta(i,a,\lambda)\geq \delta(i,\bar{a})$$
Therefore by the monotonicity of eigenvalue of the model equation, we have that 
$$\lambda\geq \lambda_D.$$
To check the sharpness of this result when $\kappa < 0$, we have the following examples: let $$M_i=[-D/2,D/2]\times_{i^{-1}\tau_3}S^{n-1}$$
be a warped product where $S^{n-1}$ is the standard unit sphere, and $\tau_3(t)=\cosh(\sqrt{-\kappa}t)$. If we consider $L$ being the classical Laplacian on $M$, then standard computation shows that $M_i$ has $\text{Ric}\geq -(n-1)\kappa$ and geodeiscially convex boundary. Hence it also satisfy the $BE(\kappa,N)$ condition. If we take $u(t,x)=w(t)$ where $w$ is the solution to our one-dimensional model equation with $\lambda=\lambda_D$. Since the diameter of $M_i$ tends to $d$ as $i\to\infty$, we see that the first eigenvalue on $M_i$ converges to $\lambda_d$, which shows the sharpness of our lower bound when $\kappa < 0$. For $\kappa > 0$, the round sphere $S^{n-1}(\frac{\pi}{\sqrt{\kappa}})$ serves as a model for sharp lower bound of $\lambda$.
\end{proof}
%\input{Isharpness.tex}

%\input{intro.tex}
%\input{new.tex}
%\input{sharpness.tex}
%\tableofcontents

\newpage
%\bigskip

\bibliographystyle{plain}
\bibliography{ref.bib}

\begin{thebibliography}{10}

\bibitem{AN12}
Ben Andrews and Lei Ni.
\newblock Eigenvalue comparison on {B}akry-{E}mery manifolds.
\newblock {\em Comm. Partial Differential Equations}, 37(11):2081--2092, 2012.

\bibitem{BE86}
D~Bakry and Michel {\'E}mery.
\newblock Propaganda for $\gamma$2.
\newblock {\em From local times to global geometry, control and physics
  (Coventry, 1984/85)}, 150:39--46, 1986.

\bibitem{BQ00}
Dominique Bakry and Zhongmin Qian.
\newblock Some new results on eigenvectors via dimension, diameter, and {R}icci
  curvature.
\newblock {\em Adv. Math.}, 155(1):98--153, 2000.

\bibitem{Ch70}
Jeff Cheeger.
\newblock A lower bound for the smallest eigenvalue of the laplacian.
\newblock In {\em Proceedings of the Princeton conference in honor of Professor
  S. Bochner}, pages 195--199, 1969.

\bibitem{Ko18}
Thomas Koerber.
\newblock Sharp estimates for the principal eigenvalue of the p-operator.
\newblock {\em Calculus of Variations and Partial Differential Equations},
  57(2):49, 2018.

\bibitem{Kr92}
Pawel Kr{\"o}ger et~al.
\newblock On the spectral gap for compact manifolds.
\newblock {\em Journal of Differential Geometry}, 36(2):315--330, 1992.

\bibitem{LY80}
Peter Li and Shing~Tung Yau.
\newblock Estimates of eigenvalues of a compact {R}iemannian manifold.
\newblock In {\em Geometry of the {L}aplace operator ({P}roc. {S}ympos. {P}ure
  {M}ath., {U}niv. {H}awaii, {H}onolulu, {H}awaii, 1979)}, Proc. Sympos. Pure
  Math., XXXVI, pages 205--239. Amer. Math. Soc., Providence, R.I., 1980.

\bibitem{LW19eigenvalue}
Xiaolong Li and Kui Wang.
\newblock Sharp lower bound for the first eigenvalue of the weighted
  $p$-laplacian.
\newblock {\em arXiv:1910.02295}, 2019.

\bibitem{LW19eigenvalue2}
Xiaolong Li and Kui Wang.
\newblock Sharp lower bound for the first eigenvalue of the weighted
  $p$-laplacian ii.
\newblock {\em Math. Res. Lett, to appear, arXiv:1911.04596}, 2019.

\bibitem{NV14}
Aaron Naber and Daniele Valtorta.
\newblock Sharp estimates on the first eigenvalue of the {$p$}-{L}aplacian with
  negative {R}icci lower bound.
\newblock {\em Math. Z.}, 277(3-4):867--891, 2014.

\bibitem{PW60}
Lawrence~E Payne and Hans~F Weinberger.
\newblock An optimal poincar{\'e} inequality for convex domains.
\newblock {\em Archive for Rational Mechanics and Analysis}, 5(1):286--292,
  1960.

\bibitem{Valtorta12}
Daniele Valtorta.
\newblock Sharp estimate on the first eigenvalue of the {$p$}-{L}aplacian.
\newblock {\em Nonlinear Anal.}, 75(13):4974--4994, 2012.

\bibitem{Wa98}
Wolfgang Walter.
\newblock Sturm-liouville theory for the radial $\delta_p $-operator.
\newblock {\em Mathematische Zeitschrift}, 227(1):175--185, 1998.

\bibitem{ZY84}
Jia~Qing Zhong and Hong~Cang Yang.
\newblock On the estimate of the first eigenvalue of a compact {R}iemannian
  manifold.
\newblock {\em Sci. Sinica Ser. A}, 27(12):1265--1273, 1984.

\end{thebibliography}

\end{document}